\begin{document}
%
%
%
%
%
%

%
%

\theoremstyle{plain}
\newtheorem{theorem}{Theorem}[section]
\newtheorem{lemma}[theorem]{Lemma}
\newtheorem{proposition}[theorem]{Proposition}
\newtheorem{corollary}[theorem]{Corollary}
\newtheorem{definition}[theorem]{Definition}
\newtheorem{conjecture}[theorem]{Conjecture}
\newtheorem{problem}[theorem]{Problem}

\theoremstyle{definition}
\newtheorem{example}[theorem]{Example}
\newtheorem{remark}[theorem]{Remark}
\newtheorem{summary}[theorem]{Summary}
\newtheorem{notation}[theorem]{Notation}

\theoremstyle{remark}
\newtheorem{claim}[theorem]{Claim}
\newtheorem{sublemma}[theorem]{Sub-lemma}
\newtheorem{innerremark}[theorem]{Remark}

\numberwithin{equation}{section}
\numberwithin{figure}{section}

%
%

\renewcommand{\labelitemi}{--}

%
%

\def \N{\mathbb{N}}
\def \Z{\mathbb{Z}}
\def \Q{\mathbb{Q}}
\def \R{\mathbb{R}}
\def \C{\mathbb{C}}

\def \A{\mathcal{A}}
\def \As{\mathcal{A}^s}
\def \AY{\mathcal{A}^Y}
\def \AYc{\mathcal{A}^{Y,c}}
\def \btT{{}_{b}^t\mathcal{T}}
\def \Cob{\mathcal{C}ob}
\def \col{\hbox{\small color}}
\def \QCyl{\Q\mathcal{C}yl}
\def \F{\mathcal{F}}
\def \I{\mathcal{I}}
\def \LCob{\mathcal{LC}ob}
\def \dLCob{{}^d\!\mathcal{LC}ob}
\def \LqCob{\mathcal{LC}ob_q}
\def \dLqCob{{}^d\!\mathcal{LC}ob_q}
\def \M{\mathcal{M}}
\def \P{\mathcal{P}}
\def \qTCub{\mathcal{T}_q\mathcal{C}ub}
\def \QLCob{\Q\mathcal{LC}ob}
\def \QLqCob{\Q\mathcal{LC}ob_q}
\def \sLCob{{}^s\!\mathcal{LC}ob}
\def \sLqCob{{}^s\!\mathcal{LC}ob_q}
\def \T{\mathcal{T}}
\def \tsA{{}^{ts}\!\!\mathcal{A}}
\def \Ztilde{\widetilde{Z}}
\def \ZtildeY{\widetilde{Z}^{Y}}
\def \ZZ{\widetilde{\mathsf{Z}}}

\def \osqcup{\hphantom{}^<_\sqcup}

\def \aug{{\rm aug}}
\def \Aut{{\rm Aut}}
\def \cl{{\rm cl}}
\def \Coker{{\rm Coker}}
\def \deg{{\rm deg}}
\def \ideg{{\rm i\hbox{-}deg}}
\def \edeg{{\rm e\hbox{-}deg}}
\def \End{{\rm End}}
\def \Gr{{\rm Gr}}
\def \Hom{{\rm Hom}}
\def \incl{{\rm incl}}
\def \Id{{\rm Id}}
\def \Img{{\rm Im}}
\def \Ker{{\rm Ker}}
\def \Lie{{\rm Lie}}
\def \Lk{{\rm Lk}}
\def \mod{{\rm mod}}
\def \ord{{\rm ord}}
\def \pr{{\rm pr}}
\def \rk{{\rm rk}}
\def \sgn{{\rm sgn}}
\def \Sp{{\rm Sp}}
\def \Tors{{\rm Tors}}
\def \ud{{\rm d}}

\newcommand{\up}{\vspace{-0.5cm}}

\newcommand{\mediumdot}{{ \displaystyle \mathop{ \ \ }^{\hbox{$\centerdot$}}}}

\newcommand{\set}[1]{\lfloor #1\rceil}
\newcommand{\Star}[3]{\stackrel{{\scriptsize #1},{\scriptsize #2}}{\star}}

\newcommand{\figtotext}[3]{\begin{array}{c}\includegraphics[width=#1pt,height=#2pt]{#3}\end{array}}
\newcommand{\capleft}{\figtotext{10}{10}{capleft.eps}}
\newcommand{\cupright}{\figtotext{10}{10}{cupright.eps}}

\newcommand{\thetagraph}
{\hspace{-0.2cm} \figtotext{16}{16}{theta} \hspace{-0.2cm}}

\newcommand{\strutgraphbot}[2]
{ \begin{array}{c} 
\phantom{.}\\[-5pt]
\labellist \small \hair 2pt 
\pinlabel {\scriptsize $#1$} [t] at 4 0
\pinlabel {\scriptsize $#2$} [t] at 180 0
\endlabellist
\includegraphics[scale=0.1]{one-chordbot}\\
\hphantom{ab}
\end{array}  }

\newcommand{\strutgraphtop}[2]
{ \begin{array}{c} 
\phantom{.}\\[-5pt]
\labellist \small \hair 2pt 
\pinlabel {\scriptsize $#1$} [t] at 4 140
\pinlabel {\scriptsize $#2$} [t] at 180 140
\endlabellist
\includegraphics[scale=0.1]{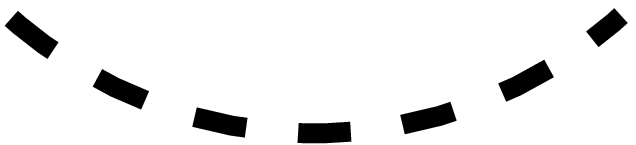}\\
\hphantom{ab}
\end{array}  }

\newcommand{\strutgraph}[2]
{ \! \! \begin{array}{c} 
\phantom{.}\\[-10pt]
\labellist \small \hair 2pt 
\pinlabel {\scriptsize $#1$} [l] at 37 18
\pinlabel {\scriptsize $#2$} [l] at 37 170
\endlabellist
\includegraphics[scale=0.1]{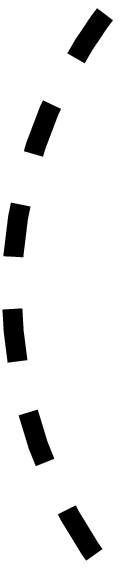} 
\end{array} \  \ }

\newcommand{\Ygraphtop}[3]
{  \begin{array}{c} 
\hphantom{.}\\
\labellist \small \hair 2pt 
\pinlabel {\scriptsize $#1$} [b] at 0 90
\pinlabel {\scriptsize $#2$} [b] at 90 90
\pinlabel {\scriptsize $#3$} [b] at 180 90
\endlabellist
\includegraphics[scale=0.18]{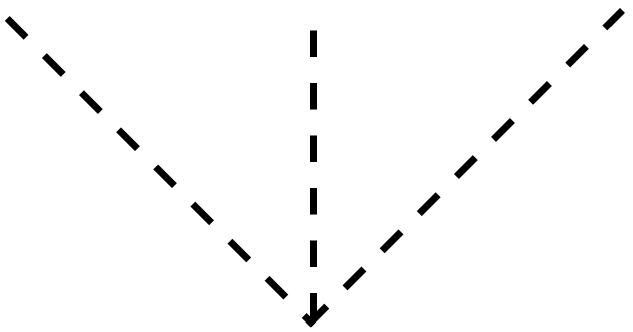}\\[-10pt]
\hphantom{.}
\end{array}  }

\newcommand{\bdraft}{\color{brown}{}}
\newcommand{\edraft}{\normalcolor{}}

\title[]{Splitting formulas for the LMO invariant\\ of rational homology three-spheres}

\date{April 16, 2014}

\author[]{Gw\'ena\"el Massuyeau}
\address{Institut de Recherche Math\'ematique Avanc\'ee, Universit\'e de Strasbourg \& CNRS,
7 rue Ren\'e Descartes, 67084 Strasbourg, France}
\email{massuyeau@math.unistra.fr}


\begin{abstract}
For rational homology $3$-spheres, there exist two universal finite-type invariants: 
the Le--Murakami--Ohtsuki invariant  and the Kontsevich--Kuperberg--Thurston invariant.
These  invariants take values in the same space of ``Jacobi diagrams'', but it is not known whether they are equal.
In 2004, Lescop proved that the KKT invariant satisfies some ``splitting formulas''
which relate the variations of  KKT  under replacement of embedded rational homology handlebodies by others in a ``Lagrangian-preserving'' way.
We show that the LMO invariant satisfies exactly the same relations.
The proof is based on the LMO functor, which is a generalization of the LMO invariant to the category of $3$-dimensional cobordisms,
and we generalize Lescop's splitting formulas to this setting.
\end{abstract}

\maketitle

\vspace{-0.5cm}

\section{Introduction}

A \emph{rational homology $3$-sphere} (or, \emph{$\Q$-homology $3$-sphere})
is a closed oriented $3$-manifold $S$  that has the same homology with rational coefficients 
as the standard $3$-sphere $S^3$.
Le, Murakami \& Ohtsuki defined in \cite{LMO} 
an invariant $Z(S)$ of rational homology $3$-spheres $S$ with values
in the algebra $\A(\varnothing)$ of Jacobi diagrams.
The LMO invariant $Z(S)$, which was originally denoted by $\hat{\Omega}(S)$ in \cite{LMO},
is multiplicative under connected sums. 
As shown in \cite{BGRT3}, it coincides with the Aarhus integral $\ring{A}(S)$  
introduced by Bar-Natan, Garoufalidis, Rozansky \& Thurston \cite{BGRT1,BGRT2}.
This paper is aimed at studying the behaviour of $Z$ under a certain type of 
rational homology handlebody replacement, called ``Lagrangian-preserving surgery'' 
by Lescop \cite{Lescop_KKT_split} and whose definition we now recall.

A \emph{rational homology handlebody} (or, \emph{$\Q$-homology handlebody}) of \emph{genus} $g$ 
is a compact oriented $3$-manifold $C'$ that has the same homology with rational coefficients as  the standard genus $g$ handlebody.
The \emph{Lagrangian} of $C'$ is the kernel $\mathbf{L}_{C'}^\Q$
of the homomorphism $\incl_*: H_1(\partial C';\Q) \to H_1(C';\Q)$ induced by the inclusion:
indeed, this is a Lagrangian subspace of $H_1(\partial C';\Q)$ with respect to the intersection pairing.
A \emph{$\Q$-Lagrangian-preserving pair} (or, \emph{$\Q$-LP pair})  is a pair $\mathsf{C}=(C',C'')$ of
two rational homology handlebodies whose boundaries are identified $\partial C' \equiv \partial C''$ 
in such a way that $\mathbf{L}_{C'}^\Q=\mathbf{L}_{C''}^\Q$. 
The \emph{total manifold} of the $\Q$-LP pair $\mathsf{C}$ is the closed oriented $3$-manifold
$$
C := (-C')\ {\cup}_{\partial}\ C''.
$$
Note that the inclusion $C' \subset C$ induces a canonical isomorphism $H_1(C';\Q)\simeq H_1(C;\Q)$.
The form $H^1(C;\Q)^{\otimes 3}\to \Q$ defined by triple-cup products 
$(x,y,z) \mapsto \langle x\cup y \cup z,[C]\rangle$ is skew-symmetric: we denote it by
$$
\mu\left(C\right) \in 
\Hom_\Q\left(\Lambda^3 H^1(C;\Q), \Q\right) \simeq \Lambda^3 H_1(C;\Q).
$$
Given a compact oriented $3$-manifold $M$ and a $\Q$-LP pair $\mathsf{C}=(C',C'')$ 
such that $C'$ is embedded in the interior of $M$,
one can replace the submanifold $C'$ in $M$ by $C''$ in order to obtain a new $3$-manifold
$$
M_{\mathsf{C}} := \left( M\setminus \operatorname{int} (C') \right) \cup_\partial C''.
$$
The move $M\leadsto M_{\mathsf{C}}$ between compact oriented $3$-manifolds is called a \emph{$\Q$-LP surgery}.

Suppose now that we are given a rational homology $3$-sphere $S$ and 
a finite family of  $\Q$-LP pairs $\mathsf{C}=(\mathsf{C}_1,\dots, \mathsf{C}_r)$ where
$C_i' \subset S$ and $C_i'\cap C_j' = \varnothing$ for all $i\neq j$.
We associate to the family $\mathsf{C}$ the  tensor
\begin{equation}
\label{eq:Y}
\mu(\mathsf{C}) := \mu(C_1) \otimes \cdots \otimes  \mu(C_r) 
\ \in \bigotimes_{i=1}^r \Lambda^3 H_1(C_i;\Q) \subset S^r \Lambda^3 H_1(C;\Q)
\end{equation}
where we have set $C:= C_1\sqcup \cdots \sqcup C_r$ so that $H_1(C;\Q)= H_1(C_1;\Q)\oplus \cdots \oplus H_1(C_r;\Q)$.
Besides, the linking number in $S$ defines for any $i\neq j$ a linear map
$$
\ell_{i,j}: H_1(C'_i;\Q) \times H_1(C'_j;\Q) \longrightarrow \Q
$$
by setting $\ell_{i,j}([K],[L]):= \Lk_S(K,L)$ for any oriented knots $K\subset C'_i$ and $L\subset C'_j$;
thus we can also associate to $\mathsf{C}$ the symmetric bilinear form
\begin{equation}
\label{eq:strut}
\ell_S(\mathsf{C}) := \sum_{i\neq j} \ell_{i,j}: H_1(C;\Q) \times H_1(C;\Q) \longrightarrow \Q,
\end{equation}
where $H_1(C;\Q)$ is identified to $H_1(C_1';\Q)\oplus \cdots \oplus H_1(C_r';\Q)$ in the canonical way.

We now recall how   symmetric products of antisymmetric $3$-tensors such as (\ref{eq:Y})
can be depicted graphically using \emph{Jacobi diagrams}.
For any $\Q$-vector space $V$, the space of  \emph{$V$-colored} Jacobi diagrams is
\begin{equation}\label{eq:Jacobi}
\A(V):= \frac{\Q\cdot\left\{\begin{array}{c}
\hbox{finite uni-trivalent graphs whose trivalent vertices are oriented}\\
\hbox{and whose univalent vertices are colored by $V$} 
\end{array}\right\}}{\hbox{AS, IHX, multilinearity}}.
\end{equation}
Here, an \emph{orientation} of a trivalent vertex is a cyclic ordering of the incident half-edges
(which, on pictures, is given by the counterclockwise direction)
and  the relations are\\[0.2cm]

\begin{center}
\labellist \small \hair 2pt
\pinlabel {AS} [t] at 102 -5
\pinlabel {IHX} [t] at 543 -5
\pinlabel {multilinearity} [t] at 1036 -5
\pinlabel {$= \ -$}  at 102 46
\pinlabel {$-$} at 484 46
\pinlabel {$+$} at 606 46
\pinlabel {$=0$} at 721 46 
\pinlabel {$+$} at 1106 46
\pinlabel {$=$} at 961 46
\pinlabel{$v_1+v_2$} [b] at 881 89
\pinlabel{$v_1$} [b] at 1042 89
\pinlabel{$v_2$} [b] at 1170 89
\pinlabel{.} at 1210 46
\endlabellist
\centering
\includegraphics[scale=0.35]{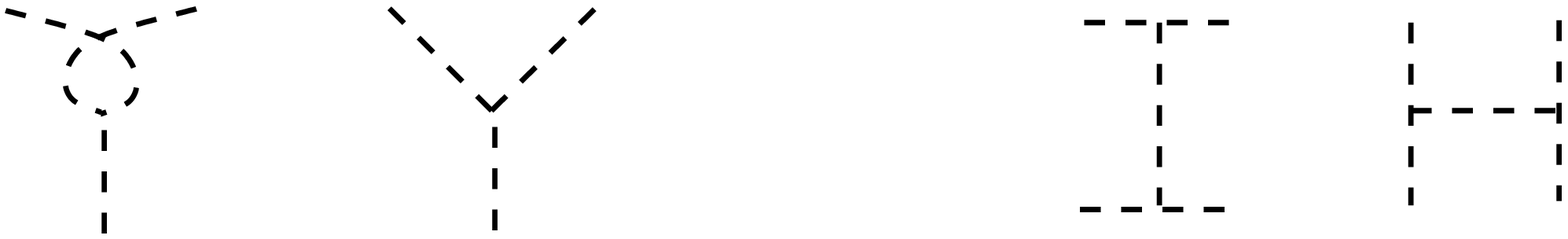}
\end{center}
\vspace{0.7cm}
With the disjoint union $\sqcup$ of diagrams, $\A(V)$ is a commutative algebra.
The \emph{internal degree} (or, \emph{i-degree}) of a Jacobi diagram is the number of trivalent vertices.
Then any symmetric product of antisymmetric $3$-tensors
$$(v_1 \wedge w_1 \wedge t_1) \cdots (v_r \wedge w_r \wedge t_r) \in S^r \Lambda^3 V$$
can be seen as the Jacobi diagram
\begin{equation}\label{eq:tripods}
\Ygraphtop{t_1}{w_1}{v_1} \sqcup \cdots \sqcup \Ygraphtop{t_r}{w_r}{v_r}
 \ \in \A(V).
\end{equation}
If we are given a symmetric bilinear form $\ell: V \times V \to \Q$,
then we can produce from  \eqref{eq:tripods} a new element of $\A(V)$ by matching pairwisely some of its univalents
vertices and  by multiplying the resulting diagram with the values of $\ell$ on the corresponding pairs of vertices. 
We shall say that we have \emph{glued with} $\ell$  some  legs  of  \eqref{eq:tripods}.
In particular, if $r$ is even, then we can glue with $\ell$ \emph{all}  legs of  \eqref{eq:tripods} to get an element of 
$$
\A(\varnothing) := \A(\{0\}) = 
\frac{\Q\cdot\left\{\begin{array}{c}
\hbox{finite trivalent graphs whose vertices are oriented}
\end{array}\right\}}{\hbox{AS, IHX}}.
$$ 
Note that $\A(\varnothing)$ is the algebra 
where the LMO invariant $Z$ of rational homology $3$-spheres takes values.
The above terminology being fixed, we can now state our main result.\\

\noindent
\textbf{Theorem.}
{\it Let $S$ be a rational homology $3$-sphere 
and let $\mathsf{C}=(\mathsf{C}_1,\dots, \mathsf{C}_r)$ be a finite family of $\Q$-LP pairs 
such that $C_i' \subset S$ and $C_i'\cap C_j' = \varnothing$ for all $i\neq j$.
For any $I\subset \{1,\dots,r\}$, 
we denote by $S_{\mathsf{C}_I}$  the manifold obtained from $S$ by 
the $\Q$-LP surgeries $S \leadsto S_{\mathsf{C}_i}$ performed simultaneously for all $i\in I$.
Then we have the following ``{splitting formula}'':
\begin{equation}
\label{eq:alternate_sum}
\sum_{I \subset \{1,\dots,r\} } (-1)^{|I|} \cdot Z\left(S_{\mathsf{C}_I}\right)
= \Bigg(\!\!\begin{array}{c} \hbox{\small sum of all ways of gluing  }\\
\hbox{\small all legs of $\mu(\mathsf{C})$ with $\ell_S(\mathsf{C})/2$ } \end{array}\!\!\Bigg) + (\ideg >r),
\end{equation}
being understood that the above sum is zero when $r$ is odd.}\\

This theorem generalizes the fact that  the LMO invariant of rational homology $3$-spheres
 is universal among $\Q$-valued finite-type invariants
in the sense of Ohtsuki and Goussarov--Habiro \cite{Le_universality,Habiro}.
Indeed, 
finite-type invariants in this sense can be formulated in terms of $\Z$-LP surgeries \cite{AL}.
(A \emph{$\Z$-LP surgery} is defined in a way similar to a $\Q$-LP surgery
except that rational homology is replaced by  integral homology.)
However, the notion of ``finite-type invariant'' differs  if one formulates it 
in terms of $\Q$-LP surgeries instead of $\Z$-LP surgeries: 
this difference has been recently analyzed by Moussard in the case of rational  homology $3$-spheres  \cite{Moussard}.
Let us observe that, in contrast with $\mathbb{Z}$-LP surgery, 
$\Q$-LP surgery relates any two rational  homology $3$-spheres:
thus  $\Q$-LP surgery is more appropriate
if one wants to consider rational  homology $3$-spheres \emph{all together.}

The analogue of the above theorem for the Kontsevich--Kuperberg--Thurston invariant  
$Z^{\operatorname{KKT}}$ 
has already been proved by Lescop: see \cite{Lescop_KKT_split} and \cite[\S 3]{Lescop_KKT_surgery}.
However, it is not known whether $Z^{\operatorname{KKT}}=Z$ in general.
Lescop's ``splitting formula'' for $Z^{\operatorname{KKT}}$ generalizes her ``sum formula'' 
for the Casson--Walker invariant  \cite{Lescop_CW}.
Indeed, according to \cite{Lescop_KKT_split} and \cite{LMO}, we have
$$
\hbox{i-degree $2$ part of }  Z^{\operatorname{KKT}}(S) 
= \frac{\lambda_{\operatorname{W}}(S)}{4} \cdot \thetagraph 
= \hbox{i-degree $2$ part of }  Z(S)
$$
where $\lambda_{\operatorname{W}}(S)$ denotes  Walker's extension of  the Casson invariant  as normalized in \cite{Walker}.

We shall prove the above theorem using the LMO functor $\Ztilde$:
this is a functorial extension of the LMO invariant to $3$-manifolds with boundary, 
which has been introduced in a previous joint work with Cheptea \& Habiro \cite{CHM}.
The possibility of such a proof has been announced in \cite[Remark 7.12]{CHM}.
The main features of the LMO functor are recalled in \S \ref{sec:LMO_functor}
but, in a few words, let us recall that it is defined on the category of  so-called ``Lagrangian cobordisms'' 
which are homology handlebodies with appropriate parameterizations of their boundaries,  and it takes values in a certain category of Jacobi diagrams. 
We state in \S \ref{sec:general} a generalized version of the  above theorem,
where the $\Q$-homology $3$-sphere $S$ is replaced by any  $\Q$-Lagrangian cobordism $M$ 
and the LMO invariant $Z$ is replaced by the LMO functor $\Ztilde$.
This results in ``generalized splitting formulas'' involving a notion $\Lk_M^{\mathbf{E}}(-,-)$ of ``linking number'' in $M$,
which depends on the choice of an isotropic subspace $\mathbf{E} \subset H_1(\partial M;\Q)$  such that $H_1(\partial M;\Q) = \mathbf{L}_M^\Q \oplus \mathbf{E}$.
Note that the category of Lagrangian cobordisms includes the monoid of homology cylinders,
so that our results apply  in particular to the LMO homomorphism studied in \cite{HabiroMassuyeau,MM} with a natural notion of linking number.
The generalized splitting formulas are proved in \S \ref{sec:proof} using the properties of the LMO functor established in \cite{CHM}.
The proof also needs several intermediate results, which can be of independent interest and are included in two appendices.
Appendix \ref{sec:lk} gives some properties of the generalized linking number  $\Lk_M^{\mathbf{E}}(-,-)$ 
and it inspects the dependence on $\mathbf{E}$.
Appendix \ref{sec:Milnor} shows that the Milnor's triple linking numbers of an algebraically-split link in a $\Q$-homology $3$-sphere 
are encoded in the ``Y'' part of the Kontsevich--LMO invariant.
The latter result is in the continuity of the work of Habegger \& Masbaum \cite{HabeggerMasbaum,Moffatt}.\\

\noindent
{\it Acknowledgements.} This work was partially supported by the French ANR research project ANR-08-JCJC-0114-01.
We would like to thank Christine Lescop for her comments and for pointing out a small gap in a previous version of Appendix \ref{sec:Milnor}.\\

\noindent
{\it Conventions.} 
The boundary $\partial N$ of an oriented manifold $N$ is always oriented using the ``outward normal  first'' rule.

The boundary of a compact oriented $3$-manifold $M$ is said to be \emph{parameterized} 
by a closed oriented surface $F$ if $M$ comes with a continuous map $m:F \to M$ that is an orientation-preserving homeomorphism onto $\partial M$;
the lower-case letter $m$ will also denote the corresponding homeomorphism  $F \to \partial M$;
 we sometimes omit the boundary parameterization in our notation and denote the pair $(M,m)$ simply by $M$.
 
Implicitly, compact oriented $3$-manifolds with parameterized boundary are considered up to homeomorphisms that preserve orientations and boundary parameterizations.
 Similarly, tangles in $3$-manifolds are considered up to isotopy.

\section{Review of the LMO functor} \label{sec:LMO_functor}

In this section, we briefly sketch the construction of the LMO functor.
The reader is referred for further details to the paper \cite{CHM}.
Here, our exposition is only intended to sum up the various steps of the construction 
using the same notations as in \cite{CHM}.

\subsection{The category of $\Q$-Lagrangian cobordisms}    \label{subsec:Q-LCob}

We start by describing the source of the LMO functor and, for this, 
we  consider the category $\Cob$ of $3$-dimensional cobordisms introduced by Crane \& Yetter \cite{CY,Kerler}.
By definition, an object of $\Cob$ is an integer $g\geq 0$,
which one thinks as the genus of a  compact connected oriented surface  with one boundary component.
We actually \emph{fix} a model $F_g$ for such a surface and we identify $\partial F_g$ with the square $S:= \partial([-1,1]^2)$.
For any integers $g_+\geq 0$ and $g_-\geq 0$, a morphism $g_+\to g_-$ in the category $\Cob$ 
is  a cobordism $(M,m)$ from the surface $F_{g_+}$ to the surface $F_{g_-}$:
more precisely, $M$ is a compact connected oriented $3$-manifold together with a {boundary parameterization}  
$$
m: -F_{g-} \cup_{S \times \{-1\}} \big( S \times [-1,1]\big) \cup_{S \times \{1\}} F_{g_+} \longrightarrow M.
$$
Thus, the boundary parameterization $m$ restricts to two embeddings $m_-: F_{g_-}\to M$ and $m_+: F_{g_+} \to M$,
whose images are called \emph{bottom surface} and \emph{top surface} respectively.
The composition $\circ$ in $\Cob$ is given by ``vertical'' gluing of cobordisms,
while the ``horizontal'' gluing of cobordisms defines a strict monoidal structure $\otimes$ on that category.
(Note that, to define the latter operation, we assume that the model surfaces $F_0,F_1,F_2,\dots$  
come with an identification of $F_{h+h'}$ with  $F_h\, \sharp_\partial\, F_{h'}$ for any $h,h'\geq 0$,
where the boundary connected sum $\sharp_\partial$ is  performed along  the segments $\{1\} \times [-1,1] \subset \partial F_h$ and  $\{-1\} \times [-1,1] \subset \partial F_{h'}$.)

\begin{figure}[h]
\begin{center}
\labellist \small \hair 0pt 
\pinlabel {$\alpha_1$} [l] at 310 105
\pinlabel {$\alpha_g$} [l] at 568 65
\pinlabel {$\beta_1$} [b] at 216 80
\pinlabel {$\beta_g$} [b] at 472 69
\pinlabel {$\circlearrowleft$} at 520 30
\pinlabel {$x$} [l] at 715 6
\pinlabel {$y$} [bl] at 695 43
\pinlabel {$z$} [b] at 656 66
\endlabellist
\includegraphics[scale=0.5]{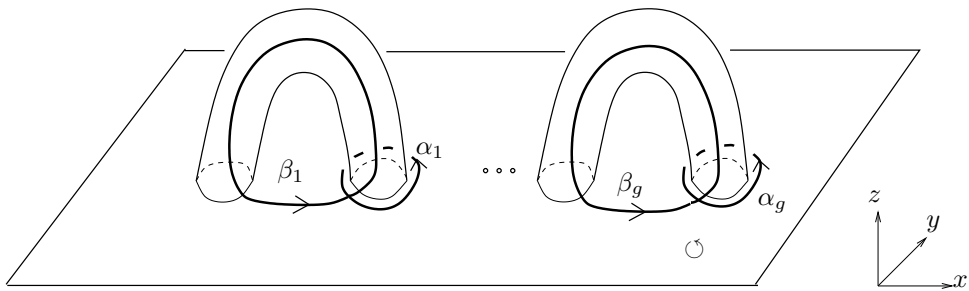}
\end{center}
\caption{The model surface  $F_g$ of genus $g\geq 0$, 
 with its system of meridians and parallels $(\alpha,\beta)$.}
\label{fig:surface}
\end{figure}

The study of these cobordisms can be reduced  to the study of some kind of tangles.
To do this, we need to \emph{choose}  a system  of ``meridians'' $(\alpha_1,\dots, \alpha_g)$ 
and ``parallels'' $(\beta_1,\dots,\beta_g)$ on each model surface $F_g$ (in a way compatible with the identifications $F_{h+h'} \equiv F_h\, \sharp_\partial\, F_{h'}$ for all $h,h'\geq 0$).
In order to fix ideas, we now assume that the surface $F_g$ is embedded in the ambient space $\R^3 \subset S^3$ (with cartesian coordinates $x,y,z$):
specifically, $F_g$ is obtained from the ``horizontal'' square $[-1,1] \times [-1,1] \times \{0\}$ by ``adding''  along the $x$ coordinate $g$ handles contained in the half-space $z>0$:
see Figure \ref{fig:surface} where the system of meridians and parallels $(\alpha,\beta)$ is also shown. 
Consider the submanifold   $C_{g_-}^{g_+}\subset \R^3$ 
obtained from  the standard cube $[-1,1]^3$ by ``digging $g_-$ tunnels at the bottom side'' and ``attaching $g_+$ handles on the top side'',  as shown on Figure \ref{fig:cube}. 
The oriented boundary of $C_{g_-}^{g_+}$ consists of one copy of $F_{g_+}$ and one copy of $-F_{g_-}$ 
(which are obtained from the model surfaces by translation in the $z$ direction) joined by the cylinder $S \times [-1,1]$.
Thus, for any  $(M,m) \in \Cob(g_+,g_-)$, the source of the boundary parameterization $m$ is  $\partial C_{g_-}^{g_+}$.
The boundary of $M$ can be ``killed'' by attaching $(g_-+g_+)$ handles of index $2$ along $\partial M$:
more precisely, we attach one $2$-handle along each curve $m_-(\alpha_i)$ of the bottom surface
and one $2$-handle along each curve $m_+(\beta_j)$ of the top surface.
What we get is a compact oriented $3$-manifold $B$ with boundary parameterization $b:\partial C_0^0  \to B$,
i.e$.$ a morphism $(B,b) \in \Cob(0,0)$.
Furthermore, the cocores of the attached $2$-handles define a $(g_++g_-)$-component 
framed oriented tangle $\gamma$ in $B$ 
with components $\gamma_1^+,\dots,\gamma_{g_+}^+$ ``on the top'' 
and components $\gamma_1^-,\dots,\gamma_{g_-}^-$ ``on the bottom'':
the pair $(B,\gamma)$ is called in \cite{CHM} a \emph{bottom-top tangle}.
For example, if we do the previous operation on $C_{g_-}^{g_+}\in \Cob(g_+,g_-)$,
then we obtain the trivial  bottom-top tangle in the standard cube $C_{0}^0=[-1,1]^3$. 
There is a category $\btT$ of bottom-top tangles whose composition law is defined 
in such a way that the previous construction $(M,m)\mapsto ((B,b),\gamma)$ defines an isomorphism of categories
\begin{equation}\label{eq:btT_to_Cob}
\btT \stackrel{\simeq}{\longrightarrow} \Cob.
\end{equation}

\begin{figure}[h]
\begin{center}
\labellist \small \hair 2pt 
\pinlabel {$1$} at 177 45
\pinlabel {$1$} at 176 285
\pinlabel {$g_-$} at 383 44
\pinlabel {$g_+$} at 383 284
\pinlabel {$F_{g_+}$} [l] at 560 408
\pinlabel {$F_{g_-}$} [l] at 590 0
\pinlabel {$x$} [l] at 654 95
\pinlabel {$y$} [bl] at 633 130
\pinlabel {$z$} [b] at 596 153
\endlabellist
\includegraphics[scale=0.38]{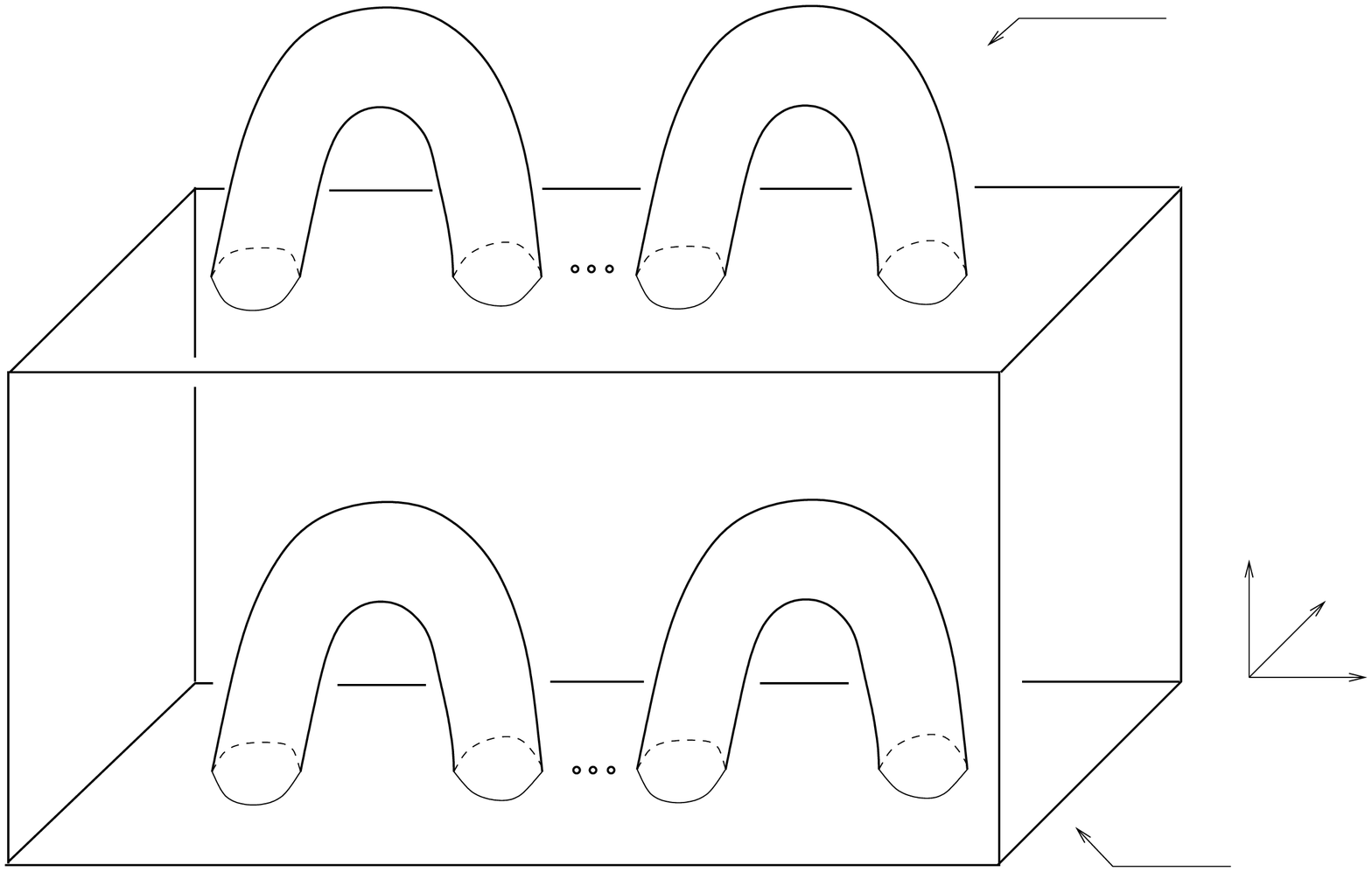}
\end{center}
\caption{The cube $C_{g_-}^{g_+}$ with $g_-$ tunnels and $g_+$ handles.}
\label{fig:cube}
\end{figure}

Unfortunately, the LMO functor is not defined on the full category $\Cob$,
but only on the subcategory $\QLCob$ of \emph{$\Q$-Lagrangian cobordisms}.
The definition of this subcategory needs to \emph{fix} a Lagrangian subspace $A_g^\Q$ of $H_1(F_g;\Q)$ for any integer $g\geq 0$.
A cobordism $(M,m) \in \Cob(g_+,g_-)$ belongs to $\QLCob(g_+,g_-)$  if and only if 
\begin{enumerate}
\item[(1)] $H_1(M;\Q) = m_{-,*}(A_{g_-}^\Q) + m_{+,*}(H_1(F_{g_+};\Q))$,
\item[(2)] $m_{+,*}(A_{g_+}^\Q) \subset m_{-,*}(A_{g_-}^\Q)$ as subspaces of $H_1(M;\Q)$.
\end{enumerate}
Concretely, we shall take  $A_g^\Q$ to be the subspace of $H_1(F_g;\Q)$ spanned by  $(\alpha_1,\dots,\alpha_g)$,
and we also consider the subspace $B_g^\Q$ spanned by  $(\beta_1,\dots,\beta_g)$. Then, in presence of (2),
condition (1) can be replaced by the following:
\begin{enumerate}
\item[(1')] $m_{+,*} \oplus m_{-,*}:   B_{g_+}^\Q \oplus A_{g_-}^\Q \to H_1(M;\Q)$ is an isomorphism.
\end{enumerate}

\begin{example} \label{ex:IC}
Let $g\geq 0$ be an integer.
A  \emph{$\Q$-homology cylinder} over the surface $F_g$ is a cobordism $(M,m) \in \Cob(g,g)$ such that $m_{\pm,*}: H_1(F_g;\Q) \to H_1(M;\Q)$ is an isomorphism and $m_{+,*}=m_{-,*}$.
The set of $\Q$-homology cylinders constitutes a submonoid $\QCyl(F_g)$ of the monoid $\QLCob(g,g)$.
\end{example} 

Assume that $((B,b),\gamma)$ is the  tangle corresponding to a cobordism $(M,m) \in \Cob(g_+,g_-)$ 
via the description \eqref{eq:btT_to_Cob}:
then $(M,m)$ belongs to $\QLCob(g_+,g_-)$ if and only if 
$B$ is a $\Q$-homology cube (i.e$.$ it has the same $\Q$-homology as the standard cube $[-1,1]^3=C_0^0$)
and the linking matrix $\Lk_B(\gamma^+)$ is trivial.
Here the \emph{linking matrix}  $\Lk_B(\gamma)$ of the framed oriented tangle $\gamma$ in $B$  is defined by
$$
\left(\begin{array}{c|c}
 \Lk_B(\gamma^+) & \Lk_B(\gamma^+,\gamma^-) \\ \hline
\Lk_B(\gamma^-,\gamma^+)  & \Lk_B(\gamma^-) 
\end{array}\right)
= \Lk_B(\gamma) := \Lk_{\hat B} (\hat \gamma) =
\left(\begin{array}{c|c}
 \Lk_{\hat B}(\hat \gamma^+) & \Lk_{\hat B}(\hat \gamma^+,\hat \gamma^-) \\ \hline
\Lk_{\hat B}(\hat \gamma^-,\hat \gamma^+)  & \Lk_{\hat B}(\hat \gamma^-) 
\end{array}\right)
$$
where  $\hat \gamma$ is the ``plat'' closure  of $\gamma$ in the $\Q$-homology $3$-sphere 
\begin{equation}\label{eq:B_hat}
\hat B:= B \cup_b \left(S^3 \setminus \operatorname{int}(C_0^0)\right),
\end{equation}
and the rows/columns of $\Lk_B(\gamma)$ are indexed   by the set $\pi_0(\gamma)=\pi_0(\gamma^+)\cup \pi_0(\gamma^-)$ of connected components of $\gamma$.

To be fully exact,
the source of the LMO functor is the category $\QLqCob$ of \emph{$\Q$-Lagrangian $q$-cobordisms}.
An object of $\QLqCob$ is a non-associative word in the single letter $\bullet$.
For any two such words $w_+$ and $w_-$, a morphism $w_+ \to w_-$ in the category $\QLqCob$ is 
a $\Q$-Lagrangian cobordism from $F_{g_+}$ to $F_{g_-}$ where $g_\pm$ is the length of $w_\pm$.
The category $\QLqCob$ is a monoidal category in the non-strict sense.

\subsection{The category of top-substantial Jacobi diagrams}

We now describe the target of the LMO functor and, for this, we need to fix some terminology.
For any finite set $C$, we denote by $\A(C)$ the space of Jacobi diagrams colored by $C$.
With the notation \eqref{eq:Jacobi} of the Introduction,  we have $\A(C):= \A(\Q \!\cdot\! C)$ where 
$\Q\!\cdot\! C$ is the vector space spanned by  $C$. 
We shall also  need the degree completion of $\A( C)$ which we  denote in the same way.
Here the \emph{degree} of a Jacobi diagram is half the total number of its vertices;
 a Jacobi diagram of degree $1$
$$
\strutgraph{c_1}{c_2}
\quad (\hbox{where} \ c_1,c_2 \in C)
$$
is called a \emph{strut}.
Any rational matrix $M=(m_{ij})_{i,j\in C}$, whose rows/columns are indexed by $C$,
defines a linear combination of struts by setting
$$
M := \sum_{i,j \in C} m_{ij}\, \strutgraph{i}{j}.
$$ 
If $S$ is another finite set, a Jacobi diagram in $\A(C \cup S)$ is \emph{$S$-substantial}
if it does not contain any strut whose two ends are colored by $S$.

The category of \emph{top-substantial Jacobi diagrams} is the linear category $\tsA$
whose objects are integers $g\geq 0$ and whose space of morphisms $\tsA(g,f)$ is,
for any integers $g\geq 0$ and $f\geq 0$, the subspace of $\A(\set{g}^+\cup \set{f}^-)$
spanned by $\set{g}^+$-substantial Jacobi diagrams.
Here $\set{g}^+$ denotes the $g$-element finite set $\{1^+,\dots,g^+\}$
while  $\set{f}^-$ denotes the $f$-element finite set $\{1^-,\dots,f^-\}$.
For any integers $f,g,h\geq 0$, the composition law $\circ$ of $\tsA$
is defined for any Jacobi diagrams $D\in \tsA(g,f) $ and $E \in \tsA(h,g)$ by
$$
D \circ E := 
\left(\begin{array}{cc}
\hbox{sum of all ways of gluing \emph{all} the $i^+$-colored vertices of $D$}\\
\hbox{to \emph{all} the $i^-$-colored vertices of $E$, for every $i=1,\dots,g$}
\end{array}\right) \ \in \tsA(h,f) .
$$
There is also a tensor product $\otimes$ in the category $\tsA$ defined by 
the disjoint union of diagrams $\sqcup$ and the appropriate shifts of colors. 
Thus the category $\tsA$  is monoidal  in the strict sense.

\subsection{Sketch of the construction}

The LMO functor of  \cite{CHM} is a tensor-preserving functor
$$
\Ztilde: \QLqCob \longrightarrow \tsA.
$$
Its construction uses the description \eqref{eq:btT_to_Cob} of cobordisms
and it can be sketched as follows.

Let $(M,m)\in \QLqCob(w,v)$ where $w$ and $v$ are non-associative words in the single letter $\bullet$
of length $g$ and $f$ respectively.
Let also $(B,\gamma):=((B,b),\gamma)$ be the bottom-top tangle 
in a $\Q$-homology cube corresponding to $M$ via \eqref{eq:btT_to_Cob}.
The framed tangle $\gamma$ can be promoted to a ``$q$-tangle'' in the sense of \cite{LM1,LM2} 
by transforming $w$ and $v$ into non-associative words in the letters $+,-$ by the rule $\bullet \mapsto (+-)$.
Then the Kontsevich--LMO invariant of $(B,\gamma)$
$$
Z(B,\gamma) \in \A(\gamma)
$$
is defined in the (degree completion of the) space $\A(\gamma)$ of Jacobi diagrams 
\emph{based} on the oriented $1$-manifold underlying $\gamma$.
The Kontsevich--LMO invariant $Z(B,\gamma)$ of $q$-tangles in $\Q$-homology cubes,
which we are using here, has the following features:
\begin{itemize}
\item If $B$ is the standard cube $[-1,1]^3$, then $Z([-1,1]^3,\gamma)$ 
coincides with the usual Kontsevich integral $Z(\gamma)$ of $q$-tangles,
as it is normalized in \cite[\S 3.4]{CHM}. (Note that this normalization  differs from \cite{LM1,LM2}.)
\item If $\gamma$ is empty,  then $Z(B,\varnothing)$ coincides 
with the usual LMO invariant $Z(\hat B)$ of the $\Q$-homology $3$-sphere $\hat B$.
(Note that this invariant is denoted by $\hat\Omega(\hat B)$ in \cite{LMO} and by $\mathring{A}(\hat B)$ in \cite{BGRT1,BGRT2}.)
\end{itemize}
The Kontsevich--LMO  invariant $Z(B,\gamma)$ is constructed from the usual Kontsevich integral 
$Z(L \cup \gamma) \in \A(L\cup \gamma)$,
where $(L,\gamma)$ is a \emph{surgery presentation} of $(B,\gamma)$:
thus, $L \subset [-1,1]^3$ is a framed link and $\gamma \subset [-1,1]^3$ is a framed oriented tangle disjoint from $L$
such that surgery along $L$ transforms $([-1,1]^3,\gamma)$ into $(B,\gamma)$.
The passage $Z(L \cup \gamma) \leadsto Z(B,\gamma)$ can be performed in two ways: 
one can either follow the original LMO approach \cite{LMO},
or use the formal Gaussian integration of \cite{BGRT1,BGRT2}. The latter approach is adopted in \cite[\S 3.5]{CHM}.
Next, by applying the diagrammatic Poincar\'e--Birkhoff--Witt isomorphism $\chi:\A(\pi_0(\gamma)) \to \A(\gamma)$, we get
$$
\chi^{-1}Z(B,\gamma) \in \A(\pi_0(\gamma))=\A(\set{g}^+ \cup \set{f}^-),
$$
where the finite set $\pi_0(\gamma)= \pi_0(\gamma^+) \cup \pi_0(\gamma^-)$
is identified with $\set{g}^+ \cup \set{f}^-$ in the obvious way. 
The fact that $\Lk_B(\gamma^+)=0$ implies that $\chi^{-1}Z(B,\gamma)$
is actually an element of $\tsA(g,f)$. 
Finally, the LMO functor is defined on the $\Q$-Lagrangian $q$-cobordism $M$ by 
$$
\Ztilde(M) := \chi^{-1}Z(B,\gamma) \circ \mathsf{T}_g \ \in \tsA(g,f)
$$
where $\mathsf{T}_g\in \tsA(g,g)$ is a constant  that is defined in an appropriate way 
from the Baker--Campbell--Hausdorff series.

Finally, let us recall that the series of Jacobi diagrams $\Ztilde(M) \in \tsA(g,f)$ can be decomposed as follows.
Let $\A^Y(\set{g}^+ \cup \set{f}^-)$ be the subspace of $\A(\set{g}^+ \cup \set{f}^-)$ 
spanned by Jacobi diagrams without strut component,
and let $\A^s(\set{g}^+ \cup \set{f}^-)$ be the subspace  spanned by disjoint unions of struts.
Then, we have
\begin{equation}\label{eq:sqcup}
\  \Ztilde(M) = \Ztilde^s(M)\, \sqcup\, \Ztilde^Y(M) \ \hbox{where} \ 
\left\{\begin{array}{l}
\! \Ztilde^s(M) = \exp_\sqcup\big(\frac{\Lk_B(\gamma)}{2}\big) \in \A^s(\set{g}^+ \cup \set{f}^-),\\
\! \Ztilde^Y(M) \in \A^Y(\set{g}^+ \cup \set{f}^-).
\end{array}\right.
\end{equation}

\section{Generalized splitting formulas for the LMO functor}   \label{sec:general}

In this section, we state a generalized version of Lescop's splitting formulas.
These ``generalized splitting formulas'' apply  to the LMO functor of  $\Q$-Lagrangian cobordisms.

\subsection{Linking numbers in $\Q$-Lagrangian cobordisms}

The first lemma implies that, if one forgets about their boundary parameterizations, 
$\Q$-Lagrangian cobordisms are just $\Q$-homology handlebodies. 

\begin{lemma}\label{lem:QLCob_to_QHH}
Let $(M,m) \in \QLCob(g,f)$ 
and let $N$ be a compact oriented $3$-manifold which is embedded in $M$.
If $N$ and $\partial N$ are connected,
then  $N$ is a $\Q$-homology handlebody. 
\end{lemma}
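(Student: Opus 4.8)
The whole statement reduces to the single claim that $H_2(N;\Q)=0$: once this is known, the remaining Betti numbers of $N$ are forced by the Euler characteristic. So the plan is first to compute the homology of the ambient cobordism $M$, and then to transport the vanishing of $H_2$ from $M$ to the submanifold $N$ by a Mayer--Vietoris argument.

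First I would record that $M$ itself is a $\Q$-homology handlebody. Its boundary $\partial M$ is a single closed oriented surface of genus $g+f$, and condition (1$'$) of \S\ref{subsec:Q-LCob} provides an isomorphism $B_g^\Q\oplus A_f^\Q\xrightarrow{\ \simeq\ }H_1(M;\Q)$, so that $\dim_\Q H_1(M;\Q)=g+f$. Since $M$ is connected with non-empty boundary we have $H_0(M;\Q)=\Q$ and $H_3(M;\Q)=0$; comparing with the relation $\chi(M)=\tfrac12\chi(\partial M)=1-(g+f)$ then forces $\dim_\Q H_2(M;\Q)=0$. In particular $M$ is a $\Q$-homology handlebody of genus $g+f$, which is precisely the remark made just before the lemma.

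Next I would propagate $H_2(M;\Q)=0$ to $N$. Taking $N$ to lie in the interior of $M$ (the case relevant to $\Q$-LP surgery), set $N':=\overline{M\setminus N}$, so that $M=N\cup_{\partial N}N'$ with $N\cap N'=\partial N$. The relevant portion of the Mayer--Vietoris sequence reads
$$
H_2(\partial N;\Q)\xrightarrow{\ \phi\ }H_2(N;\Q)\oplus H_2(N';\Q)\xrightarrow{\ \psi\ }H_2(M;\Q).
$$
Because $\partial N$ is connected, $H_2(\partial N;\Q)=\Q\cdot[\partial N]$ is one-dimensional; and because $\partial N$ is the \emph{entire} boundary of $N$, the long exact sequence of the pair $(N,\partial N)$ together with $\partial[N]=[\partial N]$ shows that $[\partial N]$ maps to $0$ in $H_2(N;\Q)$. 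Hence the first component of $\phi([\partial N])$ vanishes, so $\operatorname{im}\phi\subseteq 0\oplus H_2(N';\Q)$. As $H_2(M;\Q)=0$, exactness gives $H_2(N;\Q)\oplus H_2(N';\Q)=\operatorname{im}\phi$, and projecting onto the first summand yields $H_2(N;\Q)=0$.

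Finally, with $N$ connected, $\partial N$ connected of some genus $h$, and $H_2(N;\Q)=H_3(N;\Q)=0$, the Euler characteristic relation $1-\dim_\Q H_1(N;\Q)=\chi(N)=1-h$ gives $\dim_\Q H_1(N;\Q)=h$, so $N$ has the rational homology of the genus-$h$ handlebody. The only real content lies in the Mayer--Vietoris step, whose key input is the identification $H_2(M;\Q)=0$; there the hypothesis that $\partial N$ be connected is essential, since it is exactly what makes $H_2(\partial N;\Q)$ one-dimensional, so that its single generator $[\partial N]$ --- which necessarily dies in $N$ --- controls all of $\operatorname{im}\phi$. For a disconnected $\partial N$ the individual boundary components need not be null-homologous in $N$, and the argument would break down.
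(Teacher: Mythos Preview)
Your proof is correct and takes a genuinely different route from the paper's. The paper first reduces to the case $g=f=0$ by composing $M$ with standard handlebodies $C_0^f$ and $C_g^0$, so that the ambient manifold becomes a $\Q$-homology cube; it then argues \emph{geometrically} that $\incl_*:H_1(\partial N;\Q)\to H_1(N;\Q)$ is surjective (any $1$-cycle in $N$ bounds a rational $2$-chain in the $\Q$-homology cube, and intersecting that chain with $\partial N$ gives a preimage), and concludes $H_2(N;\Q)=0$ via Lefschetz duality $H^2(N;\Q)\simeq H_1(N,\partial N;\Q)=0$.

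Your argument avoids all three of these steps: you compute $H_2(M;\Q)=0$ directly from condition (1$'$) and the Euler characteristic, without reducing to the cube case, and then transfer this vanishing to $N$ by a purely homological Mayer--Vietoris argument, the crux being that the single generator $[\partial N]$ of $H_2(\partial N;\Q)$ already dies in $H_2(N;\Q)$. This is arguably cleaner --- no transversality, no duality --- and makes transparent exactly where the connectedness of $\partial N$ enters. The paper's route, on the other hand, extracts the intermediate fact that $H_1(\partial N;\Q)\to H_1(N;\Q)$ is onto, which your argument does not give directly (though of course it follows a posteriori). Your restriction to $N\subset\operatorname{int}(M)$ is harmless: the paper's reduction step effectively achieves the same thing, and this is the only case used later.
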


\begin{proof}
By doing the composition $C^{f}_0 \circ M \circ C_{g}^0$ in the category $\QLCob$,
we can assume that $g=f=0$, i.e$.$ $M$ is a $\Q$-homology cube. 
The map $\incl_*: H_1(\partial N;\Q) \to H_1(N;\Q)$ is surjective:
for any $x=[X] \in H_1(N;\Q)$, there exists a rational $2$-chain $Y$  in $M$ such that $\partial Y =X$
and $Y$ is transversal to $\partial N$; then, $X$ is homologous to $Y \cap \partial N$ in $N$.
Therefore $H^2(N;\Q) \simeq H_1(N, \partial N;\Q) =0$ which implies that $H_2(N;\Q)=0$.
Let $r$ be the genus of the surface $\partial N$;
since $ \chi(N) = \chi(\partial N)/2 = 1-r$, we deduce that $\dim H_1(N;\Q)=r$.
Thus $N$ is a $\Q$-homology handlebody of genus $r$.
\end{proof}

Therefore we can apply the results of  Appendix \ref{sec:lk} to any cobordism $M \in \QLCob(g,f)$.
Thus, there is a notion of linking number $\Lk_M^{m_*(\mathbf{E})}(-,-)$  in $M$ for every choice of an $M$-essential subspace $\mathbf{E}$ of
$$
H_1(\partial C_{f}^{g};\Q)\simeq \underbrace{H_1(F_{g};\Q)}_{\hbox{\scriptsize ``top''}}  \oplus \underbrace{H_1(F_{f};\Q)}_{\hbox{\scriptsize ``bottom''}}.
$$
In particular, we are interested in the  $M$-essential subspace 
$$\mathbf{B\!A} := \mathbf{B\!A}(g,f) = B_{g}^\Q \oplus A_{f}^\Q \ \subset H_1(\partial C_{f}^{g};\Q) $$ 
which is also isotropic. 
The resulting link invariant  $\Lk_M(-,-):=\Lk_M^{m_*(\mathbf{B\!A})}(-,-)$  can be reduced as follows
to the usual notion of linking number in a $\Q$-homology $3$-sphere.

\begin{lemma}\label{lem:lk}
Let $M \in \QLCob(g,f)$ and let $(B,\gamma) \in \btT(g,f)$ be the corresponding bottom-top tangle. Then,
for any disjoint oriented knots $K,L \subset M \subset B$, we have
$$
\Lk_M(K,L) = \Lk_{\hat B}(K,L)
$$
where $\hat B$ is the $\Q$-homology $3$-sphere  defined by \eqref{eq:B_hat}.
\end{lemma}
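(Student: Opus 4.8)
The plan is to exhibit a single rational $2$-chain in $\hat B$, bounded by $K$ and assembled from a $2$-chain lying inside $M$, and to read off both linking numbers from it. First I would recall from Appendix~\ref{sec:lk} how $\Lk_M(K,L)=\Lk_M^{m_*(\mathbf{B\!A})}(K,L)$ is computed. Since $\mathbf{B\!A}$ is $M$-essential, $\incl_*$ restricts to an isomorphism $m_*(\mathbf{B\!A})\stackrel{\simeq}{\longrightarrow} H_1(M;\Q)$, so there is a unique class $e_K\in m_*(\mathbf{B\!A})$ with $\incl_*(e_K)=[K]$ in $H_1(M;\Q)$; choosing a $1$-cycle $k$ in $\partial M$ representing $e_K$ and a rational $2$-chain $\Sigma$ in $M$ with $\partial\Sigma=K-k$, one has $\Lk_M(K,L)=\Sigma\cdot L$.

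The crucial geometric observation is that $m_*(\mathbf{B\!A})$ is precisely the span of the attaching curves of the $2$-handles used to pass from $M$ to $B$. Indeed $m_{+,*}(B_g^\Q)$ is spanned by the top parallels $m_+(\beta_j)$ and $m_{-,*}(A_f^\Q)$ by the bottom meridians $m_-(\alpha_i)$, and these are exactly the curves along which the $2$-handles are attached. Each of them therefore bounds the core disk of its handle, a disk whose interior lies in $B\setminus\operatorname{int}(M)$. Expanding $e_K$ as a rational combination of these curve classes, I would take the matching combination $\delta$ of core disks: a rational $2$-chain in $B$ with $\partial\delta=k$ and $\delta\cap\operatorname{int}(M)=\varnothing$.

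Then $\Sigma+\delta$ is a rational $2$-chain contained in $B\subset\hat B$ with $\partial(\Sigma+\delta)=K$. Because $\hat B$ is a $\Q$-homology $3$-sphere and $L$ is disjoint from $K$, the linking number $\Lk_{\hat B}(K,L)$ equals $(\Sigma+\delta)\cdot L$ for this bounding chain. Since $L\subset\operatorname{int}(M)$ whereas $\delta$ meets $M$ only along $\partial M$, we get $\delta\cdot L=0$, and hence $\Lk_{\hat B}(K,L)=\Sigma\cdot L=\Lk_M(K,L)$, as claimed.

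The main subtlety to watch is that the identity is tied to the specific essential subspace $\mathbf{B\!A}$: it is exactly the coincidence between the chosen complement $m_*(\mathbf{B\!A})$ of the Lagrangian and the attaching curves of the $2$-handles that lets the correcting cycle $k$ be capped off inside $B$ without creating intersections with $L$ (for another essential subspace the two linking numbers would differ). Apart from this, only routine bookkeeping remains: orienting the core disks compatibly with $k$ so that the signs match, and checking from the handle-attachment description of $B$ that $\delta$ can indeed be chosen with interior disjoint from $M$.
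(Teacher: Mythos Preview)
Your proof is correct and follows essentially the same route as the paper's: choose a rational $2$-chain in $M$ whose boundary is one of the knots plus a $1$-cycle on $\partial M$ representing a class in $m_*(\mathbf{B\!A})$, cap off that boundary cycle with the core disks of the attached $2$-handles (which bound precisely the curves $m_+(\beta_j)$ and $m_-(\alpha_i)$), and observe that the added disks miss the other knot. The only cosmetic difference is that the paper bounds $L$ and intersects with $K$ (matching formula~\eqref{eq:lk_as_intersection} directly), whereas you bound $K$ and intersect with $L$; your formula $\Lk_M(K,L)=\Sigma\cdot L$ is then really $\Lk_M(L,K)$ in the paper's conventions, and its equality with $\Lk_M(K,L)$ tacitly uses that $\mathbf{B\!A}$ is isotropic so that $\Lk_M$ is symmetric by~\eqref{eq:sym}.
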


\begin{proof}
Let $D$ be a rational $2$-chain in $M$ such that $\partial D =  L - \tilde L$, 
where $ \tilde L$ is a rational $1$-cycle in $\partial M$ such that $[ \tilde L]\in H_1(\partial M;\Q)$  belongs to $m_*(\mathbf{B\!A})$.
Since each of the curves $m_+(\beta_1),\dots, m_+(\beta_{g})$ and $m_-(\alpha_1),\dots, m_-(\alpha_{f})$
bounds a disk in $B \subset \hat B$, the $2$-chain $D$ in $M$ can be ``filled'' to a $2$-chain $D'$ in $\hat B$
such that $\partial D'= L$ and $K \mediumdot_{\!\hat B}\, D'= K \mediumdot_{\!M}\, D$. Therefore we obtain
$$
\Lk_M(K,L) \stackrel{\eqref{eq:lk_as_intersection}}{=}  K \mediumdot_{\!M}\, D =  K \mediumdot_{\! \hat B}\, D' = \Lk_{\hat B}(K,L).
$$ 

\up
\end{proof}

Although the subspace $\mathbf{B\!A}(g,f)$ has the advantage to be $M$-essential for any cobordism $M \in \QLCob(g,f)$,
it is sometimes more natural to use other essential subspaces.

\begin{example} \label{ex:D}
Consider the submonoid $\QCyl(F_g)$ of $\QLCob(g,g)$ introduced in Example~\ref{ex:IC} and set $H_\Q:= H_1(F_g;\Q)$.
The following subspaces of $H_1(\partial C_{g}^{g};\Q) \simeq H_\Q  \oplus  H_\Q$ are $M$-essential for any   $M \in \QCyl(F_g)$:
$$
\mathbf{D} := \big\{(h,h)\, \vert\, h \in H_\Q  \big\},  \quad  \mathbf{E}_+ := \big\{(h,0)\, \vert\, h \in H_\Q  \big\}, \quad  \mathbf{E}_- := \big\{(0,h)\, \vert\, h \in H_\Q  \big\}
$$ 
The corresponding notions of linking number  $\Lk_M^{m_*(\mathbf{D})}(-,-)$  and $\Lk_M^{m_*(\mathbf{E}_\pm)}(-,-)$ 
coincide with the  invariants denoted  in  \cite[Appendix B]{MM} by $\Lk(-,-)$ and $\Lk_\mp(-,-)$, respectively. 
We are particularly interested in the subspace $\mathbf{D}$ which is isotropic (in contrast with $\mathbf{E}_\pm$).
When  $M= F_g \times [-1,1]$, the number  $\Lk(K,L):=\Lk_M^{m_*(\mathbf{D})}(K,L)$ 
can be computed locally by considering knot diagrams  on the bottom surface $F_g \times \{-1\}$:\\[-0.1cm]
\begin{equation}\label{eq:Lk_cylinder}
\Lk(K,L) := 
\frac{1}{2} \left(\ \sharp \  \begin{array}{c}
\labellist
\small\hair 2pt
 \pinlabel {$K$} [br] at 1 19
 \pinlabel {$L$} [bl] at 19 20
\endlabellist
\includegraphics[scale=1.0]{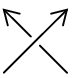}
\end{array}
+ \sharp \  \begin{array}{c}
\labellist
\small\hair 2pt
 \pinlabel {$L$} [br] at 1 19
 \pinlabel {$K$} [bl] at 19 20
\endlabellist
\includegraphics[scale=1.0]{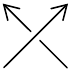}
\end{array}\ \right)
- \frac{1}{2}\left(\ \sharp \  \begin{array}{c}
\labellist
\small\hair 2pt
 \pinlabel {$K$} [br] at 1 19
 \pinlabel {$L$} [bl] at 19 20
\endlabellist
\includegraphics[scale=1.0]{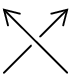}
\end{array}
+ \sharp \  \begin{array}{c}
\labellist
\small\hair 2pt
 \pinlabel {$L$} [br] at 1 19
 \pinlabel {$K$} [bl] at 19 20
\endlabellist
\includegraphics[scale=1.0]{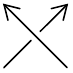}
\end{array}\ \right)
\end{equation}
\end{example}

\subsection{Statement of the generalized splitting formulas}

Let $f,g\geq 0$ be integers. It follows from Lemma \ref{lem:lk} and the equivalence (i) $\Leftrightarrow$ (iii)  in Lemma \ref{lem:Q-LP} 
that the $\Q$-LP surgery equivalence class of a cobordism $(M,m)\in \QLCob(g,f)$ is classified by the matrix $\Lk_B(\gamma)$,
where $(B,\gamma) \in \btT(g,f)$ is the corresponding bottom-top tangle. 
According to the decomposition \eqref{eq:sqcup}, this equivalence class is encoded by the ``strut'' part $\Ztilde^s(M)$ of $\Ztilde(M)$.

Thus, we  fix  in the sequel a $\Q$-LP surgery equivalence class $\mathcal{M} \subset \QLCob(g,f)$ 
and consider only the  ``Y'' part $\Ztilde^Y$  of the LMO functor.
Besides, we shall consider the following variants of $\Ztilde^Y: \mathcal{M} \to \A^Y(\set{g}^+ \cup \set{f}^-)$.
Set $\mathbf{B\!A} := \mathbf{B\!A}(g,f)$ and identify the vector spaces  $\A^Y(\mathbf{B\!A})$ and  $\A^Y(\set{g}^+ \cup \set{f}^-)$
by  the following correspondence of colors: 
$$
\xymatrix @!0 @R=0.5cm @C=4cm  {
\mathbf{B\!A} = B_g^\Q \oplus A_f^\Q \ar[r]^-\simeq  & \Q\! \cdot\! (\set{g}^+ \cup \set{f}^- ) \\
\beta_j \ar@{|->}[r] & j^+\\
\alpha_i \ar@{|->}[r] & i^-
}
$$
For any isotropic $\mathcal{M}$-essential subspace $\mathbf{E}$ of $H_1(\partial C^g_f;\Q)$, 
consider the isomorphism $\kappa_{\mathcal{M}}^{\mathbf{B\!A},\mathbf{E}}:  \A^Y(\mathbf{B\!A}) \to \A^Y(\mathbf{E})$ introduced in \S \ref{subsec:Q-LP} and set
$$
Z^{\mathbf{E}}_{\mathcal{M}} := \kappa_{\mathcal{M}}^{\mathbf{B\!A},\mathbf{E}} \circ \Ztilde^Y: \mathcal{M} \longrightarrow  \A^Y(\mathbf{E}).
$$

\begin{example} \label{ex:Z}
Take $\mathcal{M}:= \QCyl(F_g)$ which, by Lemma \ref{lem:Q-LP},  is the $\Q$-LP surgery equivalence class of the standard cylinder $F_g \times [-1,1]$.
Identify the space $\A^Y(\mathbf{D})$ with $\A^Y(H_\Q)$  
where  $\mathbf{D}$  is the $\mathcal{M}$-essential isotropic subspace discussed in Example \ref{ex:D} 
and is identified with $H_\Q = H_1(F_g;\Q)$ by the map $h \mapsto (h/2,h/2)$.
Let also $s:\A^Y(H_\Q) \to \A^Y(H_\Q)$ be the automorphism defined by $s(D):= (-1)^{\chi(D)}\! \cdot\! D$ 
where $\chi(D)$ is the Euler characteristic of an $H_\Q$-colored Jacobi diagram $D$.
It is easily checked that $s \circ \kappa_{\mathcal{M}}^{\mathbf{B\!A},\mathbf{D}}$ is  the isomorphism denoted by $\kappa$  in \cite[\S 4.1]{MM}. 
Hence $s \circ Z^{\mathbf{D}}_{\mathcal{M}}$ coincides with the \emph{LMO homomorphism} 
$$Z:=\kappa \circ  \ZtildeY: \QCyl(F_g) \longrightarrow \A^Y(H_\Q)$$
which has been studied in \cite{HabiroMassuyeau,MM}.
\end{example}

We can now state our generalized splitting formulas.

\begin{theorem}\label{th:general}
Let  $\mathcal{M} \subset \QLCob(g,f)$ be a $\Q$-LP surgery equivalence class and let $\mathbf{E}$ be an $\mathcal{M}$-essential isotropic subspace of $H_1(\partial C_f^g;\Q)$.
For any $M \in \mathcal{M}$ and for any finite  family $\mathsf{C}=(\mathsf{C}_1,\dots, \mathsf{C}_r)$  of $\Q$-LP pairs 
such that $C_i' \subset \operatorname{int}(M)$ and $C_i'\cap C_j' = \varnothing$ for all $i\neq j$,  
\begin{equation}\label{eq:general}
\sum_{I \subset \{1,\dots,r\} } (-1)^{|I|} \cdot Z^{\mathbf{E}}_{\mathcal{M}}  \left(M_{\mathsf{C}_I}\right)
= \rho_{\mathsf{C}}^{\mathbf{E}}\Bigg(\!\!\begin{array}{c} \hbox{\small sum of all ways of gluing  }\\
\hbox{\small  some legs of $\mu(\mathsf{C})$ with $\ell_M^{\mathbf{E}}(\mathsf{C})/2$} \end{array}\!\!\Bigg) + (\ideg >r).
\end{equation}
\end{theorem}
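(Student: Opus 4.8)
The plan is to turn the alternating sum into a multilinear expression via the functoriality of the LMO functor, and then to isolate its leading internal degree. First I would fix disjoint closed regular neighborhoods of $C_1',\dots,C_r'$ inside $\operatorname{int}(M)$ and set $M^\circ := M\setminus\bigcup_i\operatorname{int}(C_i')$. By Lemma \ref{lem:QLCob_to_QHH} each $C_i'$ (and each $C_i''$) is a $\Q$-homology handlebody; after choosing a boundary parameterization of the common surface $\partial C_i'\equiv\partial C_i''$, I would regard $C_i'$ and $C_i''$ as morphisms in $\QLCob$ and view $M^\circ$ as a cobordism into whose $r$ inner boundary components they can be plugged, so that $M_{\mathsf{C}_I}$ is recovered by filling the $i$-th component with $C_i'$ for $i\notin I$ and with $C_i''$ for $i\in I$. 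Since a $\Q$-LP surgery does not change the $\Q$-LP surgery equivalence class, all the $M_{\mathsf{C}_I}$ lie in the fixed class $\mathcal{M}$ and hence share the same strut part $\Ztilde^s$; the whole variation is therefore carried by $\ZtildeY$, and it suffices to compute $\sum_I(-1)^{|I|}\ZtildeY(M_{\mathsf{C}_I})$ before applying the isomorphism $\kappa_{\mathcal{M}}^{\mathbf{B\!A},\mathbf{E}}$.

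Next, functoriality of $\Ztilde$ (together with its tensor-preserving property, from \cite{CHM}) expresses each $\Ztilde(M_{\mathsf{C}_I})$ as the composition of $\Ztilde(M^\circ)$ with $\bigotimes_i X_i^I$, where $X_i^I=\Ztilde(C_i')$ if $i\notin I$ and $X_i^I=\Ztilde(C_i'')$ if $i\in I$, the composition gluing the legs attached to the $i$-th inner surface. As composition and tensor product in $\tsA$ are multilinear, the alternating sum collapses to
\[
\sum_{I\subset\{1,\dots,r\}}(-1)^{|I|}\,\Ztilde(M_{\mathsf{C}_I})
=\Ztilde(M^\circ)\circ\bigotimes_{i=1}^r\big(\Ztilde(C_i')-\Ztilde(C_i'')\big),
\]
so everything reduces to understanding the single-pair differences $\Ztilde(C_i')-\Ztilde(C_i'')$.

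The key input is a single-pair computation. Since $C_i'$ and $C_i''$ have the same Lagrangian, their strut parts coincide, so $\Ztilde(C_i')-\Ztilde(C_i'')=\exp_\sqcup(\sigma_i)\sqcup(\ZtildeY(C_i')-\ZtildeY(C_i''))$, and the Y-difference begins in internal degree $1$ with leading term $\mu(C_i)$ under $H_1(C_i';\Q)\simeq H_1(C_i;\Q)$ (for the orientation conventions fixed by $C_i=(-C_i')\cup_\partial C_i''$). This degree-$1$ identification rests on the additivity $\mu(C_i)=\mu(C_i'')-\mu(C_i')$ across the gluing and on the identification of $\mu$ with the degree-$1$ tree part of the Kontsevich--LMO invariant proved in Appendix \ref{sec:Milnor}. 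Substituting into the displayed product and using that composition in $\tsA$ never lowers the internal degree, the left-hand side lies in $\ideg\geq r$, and its $\ideg=r$ part is the image under $\Ztilde(M^\circ)$ of the $r$ leading tripods $\mu(C_i)$; at that degree their legs can only be glued by struts. Gluing two legs of a single tripod vanishes, since $\mu(C_i)\in\Lambda^3 H_1(C_i;\Q)$ is antisymmetric while any strut propagator is symmetric, so only cross-gluings between distinct handlebodies survive; their weight is, by Lemma \ref{lem:lk}, the linking number $\Lk_M$, which accounts for the factor $\ell_M^{\mathbf{E}}/2$ and the off-diagonal sum $\sum_{i\neq j}$ of \eqref{eq:strut}. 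The legs left unglued are recorded by $\rho_{\mathsf{C}}^{\mathbf{E}}$, every remaining contribution lies in $(\ideg>r)$, and applying $\kappa_{\mathcal{M}}^{\mathbf{B\!A},\mathbf{E}}$ turns the pairing that arises naturally with respect to $\mathbf{B\!A}$ into its $\mathbf{E}$-version, giving \eqref{eq:general}.

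The main obstacle is the single-pair statement underlying the previous paragraph: proving that the two members of a $\Q$-LP pair have identical strut parts and, above all, that the leading Y-difference is exactly $\mu(C_i)$. The latter is the heart of the argument and depends on the delicate identification, carried out in Appendix \ref{sec:Milnor}, of the triple cup product form with the tree part of the Kontsevich--LMO invariant, in the spirit of Habegger--Masbaum. A secondary difficulty is the rigorous set-up of the reduction itself: realizing $M^\circ$ and the fillings $C_i',C_i''$ as genuine morphisms (verifying the Lagrangian conditions of \S\ref{subsec:Q-LCob}, or enlarging the category suitably so that functoriality still applies), and matching the strut propagator produced by the composition with the generalized linking number $\Lk_M^{\mathbf{E}}$ while tracking its dependence on $\mathbf{E}$ through $\rho_{\mathsf{C}}^{\mathbf{E}}$, for which Appendix \ref{sec:lk} is used.
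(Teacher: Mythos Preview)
Your outline is correct and follows essentially the same route as the paper: reduce to $\mathbf{E}=\mathbf{B\!A}$, decompose $M$ so as to apply functoriality and multilinearity of $\Ztilde$, collapse the alternating sum into a tensor product of single-pair differences $\Ztilde(C_i')-\Ztilde(C_i'')$, identify the i-degree~$1$ part of each difference with $\mu(C_i)$ via Lemma~\ref{lem:triple-cup} (itself resting on Appendix~\ref{sec:Milnor}), compute the resulting composition in $\tsA$, and finally transport to a general $\mathbf{E}$ via $\kappa_{\mathcal{M}}^{\mathbf{B\!A},\mathbf{E}}$ using Appendix~\ref{sec:lk}.

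The one place where the paper is concrete and you remain schematic is precisely your ``secondary difficulty.'' The category $\QLCob$ has no room for a cobordism $M^\circ$ with extra inner boundary components, so $\Ztilde(M^\circ)$ is not defined as written. The paper's fix is to take a collar of the bottom surface $m_-(F_f)$ and connect each $C_i'$ to that collar by a disjoint solid tube $T_i$, which converts the picture into a genuine factorization
\[
M=\big(C_1'\otimes\cdots\otimes C_r'\otimes \Id_f\big)\circ N, \qquad N\in \Cob(g,e+f),
\]
and then checks that $N$ is actually $\Q$-Lagrangian by observing that the bottom-top tangle of $N$ is obtained from that of $M$ by $r$ $\Q$-LP surgeries (Lemma~\ref{lem:pre_Q-LP}). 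This is the step you would need to supply; everything else in your sketch matches the paper's argument.
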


\noindent
Here we have implicitly fixed some  non-associative words $w$ and $v$ in the single letter $\bullet$ of length $g$ and $f$, respectively,
so that any $N\in \QLCob(g,f)$ is upgraded to  $N\in \QLCob_q(w,v)$. We have  also used the same notations as in the Introduction.
Thus, for any $I\subset \{1,\dots,r\}$,  $M_{\mathsf{C}_I}$  is the manifold obtained from $M$ by 
the $\Q$-LP surgeries $M \leadsto M_{\mathsf{C}_i}$ performed simultaneously for all $i\in I$;
the sum of Jacobi diagrams $\mu(\mathsf{C}) \in \A(H_1(C;\Q))$ and the symmetric bilinear form $\ell_M^{\mathbf{E}}(\mathsf{C}): H_1(C;\Q) \times H_1(C;\Q)  \to \Q$
are defined as in the Introduction except that, for the latter, we use the generalized linking number $\Lk_M^{m_*(\mathbf{E})}(-,-)$ in  $M$
instead of the usual linking number in a   $\Q$-homology $3$-sphere (see Lemma \ref{lem:P&Q}).
Finally, $\rho_{\mathsf{C}}^{\mathbf{E}}: \A(H_1(C;\Q)) \to \A(\mathbf{E})$ is the linear map that changes the colors  as follows:
\begin{equation} \label{eq:change_colors-general} 
\xymatrix {
H_1(C;\Q) & {\displaystyle \bigoplus_{i=1}^r H_1(C'_i;\Q)} \ar[r]^-{\incl_*} \ar[l]_-{\incl_*}^-\simeq  & H_1(M;\Q) & \mathbf{E} \ar[l]_-{ m_{*}}^-\simeq.
}
\end{equation}

Theorem \ref{th:general} implies the theorem stated in the Introduction.
Indeed, any $\Q$-homology $3$-sphere $S$ can be transformed into a  cobordism 
$M \in \QLCob(0,0)$ by removing an open $3$-ball. We take $\mathcal{M}:= \QLCob(0,0)$, 
which is the $\Q$-LP surgery equivalence class of the standard sphere $S^3$, and $\mathbf{E}:=0$.
Then the linear map $\rho_{\mathsf{C}}^{\mathbf{E}}:\A(H_1(C;\Q)) \to \A(\varnothing)$
kills any diagram having at least one univalent vertex, so that the ``sum of all ways of gluing some legs''
turns into a ``sum of all ways of gluing all legs.'' 

\begin{example}
Take $\mathcal{M}:= \QCyl(F_g)$ and $\mathbf{E} :=\mathbf{D}$ as in Example  \ref{ex:Z}.
In this case,  Theorem \ref{th:general} provides splitting formulas  for the LMO homomorphism $Z: \QCyl(F_g) \to \A^Y(H_\Q)$
which involve the notion of linking number  discussed in Example \ref{ex:D}.
For $r=2$, these formulas are generalizations of   \cite[Proposition C.2]{MM}.
\end{example}

A consequence of Theorem \ref{th:general} is that $Z^{\mathbf{E}}_{\mathcal{M}}$ is universal 
among $\Q$-valued finite-type invariants (in the sense of Ohtsuki and Goussarov--Habiro).
This universality property  is obtained in \cite[\S 7]{CHM} in the special case $\mathbf{E}=\mathbf{B\!A}$ and for  $\Z$-Lagrangian cobordisms.
Theorem \ref{th:general} is proved in the next section by enhancing the arguments of \cite[Theorem 7.11]{CHM}.

\section{Proof of the generalized splitting formulas}  \label{sec:proof}

This section is aimed at proving Theorem \ref{th:general}. 

\subsection{A special case}

The following is the specialization of Theorem \ref{th:general} for  $\mathbf{E}=\mathbf{B\!A}$.

\begin{lemma}\label{lem:special}
Let  $w$ and $v$ be non-associative words in the single letter $\bullet$
of length $g$ and $f$ respectively.
Let  $M\in \QLCob_q(w,v)$ 
and let $\mathsf{C}=(\mathsf{C}_1,\dots, \mathsf{C}_r)$ be a family of  $\Q$-LP pairs 
where $C_i' \subset \operatorname{int}(M)$ and $C_i'\cap C_j' = \varnothing$ for all $i\neq j$.
Then we have
\begin{equation}\label{eq:special}
\sum_{I \subset \{1,\dots,r\} } (-1)^{|I|} \cdot \ZtildeY\left(M_{\mathsf{C}_I}\right)
= \rho_{\mathsf{C}}\Bigg(\!\!\begin{array}{c} \hbox{\small sum of all ways of gluing  }\\
\hbox{\small  some legs of $\mu(\mathsf{C})$ with $\ell_M(\mathsf{C})/2$} \end{array}\!\!\Bigg) + (\ideg >r).
\end{equation}
\end{lemma}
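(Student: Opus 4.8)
The plan is to reduce the statement to a computation with the Kontsevich--LMO invariant of the bottom-top tangle $(B,\gamma)$ corresponding to $M$, and then to analyze how a simultaneous family of $\Q$-LP surgeries affects that invariant. The key structural observation is that performing the $\Q$-LP surgeries $M \leadsto M_{\mathsf{C}_i}$ for $i\in I$ does not change the ambient homology cube $B$ or the tangle $\gamma$ away from the handlebodies $C_i'$; it only modifies the part of the surgery presentation localized inside the $C_i'$. Since $\Ztilde^Y(M) = \chi^{-1}Z(B,\gamma)\circ \mathsf{T}_g$ up to the strut correction, and the strut part $\Ztilde^s$ is unchanged within a fixed $\Q$-LP surgery equivalence class, the whole alternating sum can be traced through to an alternating sum of Kontsevich--LMO invariants where each surgery is either performed or not.

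First I would fix a surgery presentation $(L,\gamma)$ of $(B,\gamma)$ and, for each $i$, a surgery presentation (a ``clasper'' or Lagrangian-preserving description) of the pair $\mathsf{C}_i=(C_i',C_i'')$ supported in a regular neighborhood of $C_i'$. Since the $C_i'$ are pairwise disjoint, these local presentations are unlinked from one another, so the presentation of $M_{\mathsf{C}_I}$ is obtained from that of $M$ by inserting, for each $i\in I$, the local data of $\mathsf{C}_i$. The Kontsevich integral is multiplicative on such separated insertions, which means $Z$ of the full presentation factorizes as a product (in $\A$) over the regions, and the alternating sum $\sum_I (-1)^{|I|}$ then factors region by region. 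For each single index $i$ the difference $Z(\text{with }\mathsf{C}_i) - Z(\text{without})$ carries the leading contribution $\mu(C_i)$, a tripod-shaped Y-graph attached along $H_1(C_i';\Q)$, as the lowest-degree term; this is precisely the content that each $\Q$-LP pair contributes one factor of $\mu(\mathsf{C})$ in internal degree exactly $1$ per index (hence total i-degree $r$), and higher contributions land in $(\ideg > r)$.

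Next I would assemble these local factors. Taking the product of the $r$ single-index differences produces $\mu(C_1)\sqcup\cdots\sqcup\mu(C_r) = \mu(\mathsf{C})$ as the leading term, with all legs colored by $\bigoplus_i H_1(C_i';\Q)$. The cross terms between distinct $C_i'$ and $C_j'$ enter through the Kontsevich integral only via their linking, i.e.\ through struts connecting legs of $\mu(C_i)$ to legs of $\mu(C_j)$, each weighted by $\Lk_M(K,L)=\Lk_{\hat B}(K,L)$ by Lemma~\ref{lem:lk}. Gluing a strut with coefficient $\ell_M(\mathsf{C})/2$ is exactly the prescription in the right-hand side of \eqref{eq:special}; the factor $1/2$ arises from the symmetric bilinear form together with the standard normalization $\exp_\sqcup(\Lk/2)$ of the strut part in \eqref{eq:sqcup}. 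After applying $\chi^{-1}$, composing with $\mathsf{T}_g$, and passing to the ``Y'' part (which discards struts with both ends on the boundary colors and is where $\rho_{\mathsf{C}}$ records the color change), one obtains the claimed formula in $\A^Y$. The color-changing map $\rho_{\mathsf{C}}$ appears because the Y-graphs $\mu(C_i)$ are naturally colored by $H_1(C_i';\Q)$ and must be pushed into the boundary coloring $\mathbf{B\!A}$ via \eqref{eq:change_colors-general}.

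\textbf{The main obstacle} I expect is controlling degrees: one must show that each $\Q$-LP pair $\mathsf{C}_i$ contributes \emph{at least} internal degree $1$ in the single-index difference (so that any term using fewer than all $r$ pairs cancels in the alternating sum, and any term of i-degree $\le r$ that survives uses each pair exactly once and in its leading order), and that the leading order of the single-index contribution is exactly $\mu(C_i)$ glued by linking numbers with the other pairs, with everything else absorbed into $(\ideg > r)$. This is the delicate bookkeeping in Gaussian integration: I would argue it by combining the behavior of the Kontsevich integral under $\Q$-LP surgery (each surgery is a ``degree $\ge 1$'' perturbation because the two handlebodies share the same Lagrangian, killing the degree-$0$ and strut-level difference) with the separated-support multiplicativity, exactly enhancing the argument of \cite[Theorem 7.11]{CHM} from a single pair to a family. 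The identification of the precise leading coefficient as $\mu(C_i)$ is where I would invoke the triple-cup-product interpretation and, if needed, the computation relating the ``Y'' part of the Kontsevich--LMO invariant to Milnor invariants from Appendix~\ref{sec:Milnor}.
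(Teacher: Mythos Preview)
Your high-level outline is right in spirit---the alternating sum should factor as a product of $r$ single-index differences, each contributing $\mu(C_i)$ in lowest i-degree, and the cross-linking between distinct $C_i'$ should account for the leg-gluings with $\ell_M(\mathsf{C})/2$---but the mechanism you propose for obtaining that factorization has a real gap.

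You write that ``the Kontsevich integral is multiplicative on such separated insertions, which means $Z$ of the full presentation factorizes as a product (in $\A$) over the regions.'' This is not a standard property: the Kontsevich--LMO invariant of a surgery presentation $(L\cup\gamma)$ is computed by a formal Gaussian integral over \emph{all} the surgery components simultaneously, and disjoint support of the local modifications inside $M$ does not by itself yield a tensor factorization of the integral. What the paper does instead is upgrade the disjointness to an honest decomposition \emph{in the category} $\QLCob$: one chooses boundary parameterizations $c_i'$ so that $(C_i',c_i')\in\QLCob(e_i,0)$ (Lemma~\ref{lem:QHH_to_QLCob}), runs tubes from the $C_i'$ down to the bottom surface, and obtains
\[
M \;=\; \big((C_1',c_1')\otimes\cdots\otimes(C_r',c_r')\otimes\Id_f\big)\circ (N,n)
\]
with $(N,n)\in\QLCob(g,e+f)$. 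Now \emph{functoriality} of $\Ztilde$ and bilinearity of $\circ,\otimes$ in $\tsA$ give the clean factorization
\[
\sum_I(-1)^{|I|}\Ztilde(M_{\mathsf{C}_I}) \;=\; \Big(\big(\Ztilde(C_1')-\Ztilde(C_1'')\big)\otimes\cdots\otimes\big(\Ztilde(C_r')-\Ztilde(C_r'')\big)\otimes\Id_f\Big)\circ\Ztilde(N),
\]
which is exactly the ``product of single-index differences'' you want, but now rigorously. Your approach via local surgery presentations and ``claspers'' also runs into the issue that a general $\Q$-LP pair is not realized by clasper surgery (claspers give $\Z$-LP pairs), so the local model you envisage is not available in this generality.

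Once the categorical factorization is in place, the remaining steps are as you anticipated: the strut parts of $\Ztilde(C_i')$ and $\Ztilde(C_i'')$ agree (same Lagrangian, hence same linking data), so each difference starts in i-degree $\geq 1$; Lemma~\ref{lem:triple-cup} (via Appendix~\ref{sec:Milnor}) identifies the i-degree~$1$ part of $\Ztilde(C_i')-\Ztilde(C_i'')$ with $\mu(C_i)$; and the composition with $\Ztilde(N)$ is then evaluated using the explicit strut part $\Ztilde^s(N)=\exp_\sqcup(\Lk_D(\upsilon)/2)$ and the combinatorial composition rules in $\tsA$ \cite[Lemma~4.5]{CHM}, which produces precisely the leg-gluings with the correct linking numbers and the color change $\rho_{\mathsf{C}}$.
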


\noindent
Here $\ell_M(\mathsf{C}):= \ell_M^{\mathbf{B\!A}}(\mathsf{C})$ and the map
$\rho_{\mathsf{C}}:= \rho_{\mathsf{C}}^{\mathbf{B\!A}}: \A(H_1(C;\Q)) \to \A(\mathbf{B\!A}) \simeq \A(\set{g}^+ \cup \set{f}^-)$ 
consists in changing  the colors of univalent vertices as follows:
\begin{equation}\label{eq:change_colors}
\xymatrix @!0 @R=0.5cm @C=32mm  {
H_1(C;\Q) & {\displaystyle \bigoplus_{i=1}^r H_1(C'_i;\Q)} \ar[r]^-{\incl_*} \ar[l]_-{\incl_*}^-\simeq  
& H_1(M;\Q) & B_g^\Q \oplus A_f^\Q \ar[l]_-{ m_{+,*}\oplus m_{-,*}}^-\simeq  
}
\end{equation}

Using the notations of Lemma \ref{lem:special}, we now  show that formula \eqref{eq:special} implies \eqref{eq:general}
for any  isotropic  $M$-essential subspace $\mathbf{E}$ of $H_1(\partial C_f^g;\Q)$.
Let $D \in \A(H_1(C;\Q))$ be a Jacobi diagram of the form
$$
D= \Ygraphtop{c_1}{b_1}{a_1} \sqcup \cdots \sqcup \Ygraphtop{c_r}{b_r}{a_r}
$$
where $a_1,b_1,c_1 \in H_1(C_1;\Q),\dots, a_r,b_r,c_r \in H_1(C_r;\Q)$.
Consider $k$ pairs of univalent vertices  $\{v_1,w_1\},\dots, \{v_k,w_k\}$  of $D$
such that  $\{v_i,w_i\}\cap \{v_j,w_j\} = \varnothing$ for any $i\neq j$. 
By making the identifications $v_i\equiv w_i$ for every $i\in \{1,\dots,k\}$ and by applying the map \eqref{eq:change_colors-general} to the colors of the remaining univalent vertices,
we obtain an $\mathbf{E}$-colored Jacobi diagram $D'$.  
We can assume that $v_i$ and $w_i$ belong to different connected components of $D$ for all $i\in \{1,\dots,k\}$ 
because  $D'$ is otherwise trivial in $\A(\mathbf{E})$ by the AS relation.
For all $i\in \{1,\dots,k\}$, let $V_i,W_i \subset C'_1 \sqcup \cdots \sqcup C'_r \subset M$ be oriented knots representing 
$$
\col(v_i),\col(w_i) \in H_1(C;\Q) \simeq H_1(C'_1;\Q) \oplus \cdots \oplus H_1(C'_r;\Q),
$$
respectively, and let $f$ be the inverse of $m_*\!:\! \mathbf{BA} \to H_1(M;\Q)$.
The  coefficient of $D'$ in 
$$
\rho_{\mathsf{C}}^{\mathbf{E}}\left(\!\!\begin{array}{c} \hbox{\small sum of all ways of gluing  }
\hbox{\small  some legs of $D$ with $\ell_M^{\mathbf{E}}(\mathsf{C})/2$} \end{array}\!\!\right)
$$
 is the product 
\begin{eqnarray*}
\prod_{j=1}^k  \Lk_M^{m_*(\mathbf{E})}\left( V_j , W_j\right) 
& \stackrel{\eqref{eq:variation_Lk}}{=}&\prod_{j=1}^k  \left( \Lk_M(V_j, W_j) + \vartheta_M^{m_*(\mathbf{E})}\big(m_*f([V_j]), m_*f([W_j])\big)\right) \\
&=&  \sum_{P\subset \{1,\dots,k\}}\  \prod_{p\in P} \Lk_M(V_p,W_p) \cdot  \prod_{q\not\in P}  \vartheta_{\mathcal{M}}^{\mathbf{E}}\big(f([V_q]), f([W_q])\big).
\end{eqnarray*}
This is also the coefficient of $D'$ in 
$$
\kappa_{\mathcal{M}}^{\mathbf{B\!A},\mathbf{E}} \rho_{\mathsf{C}}\left(\!\!\begin{array}{c} \hbox{\small sum of all ways of gluing  }
\hbox{\small  some legs of $D$ with $\ell_M(\mathsf{C})/2$} \end{array}\!\!\right)
$$
since the colors of $v_i$ and $w_i$ are mapped by \eqref{eq:change_colors} to $f([V_i])$ and $f([W_i])$ respectively.
So \eqref{eq:general} directly follows from \eqref{eq:special} by applying $\kappa_{\mathcal{M}}^{\mathbf{B\!A},\mathbf{E}}$.
The proof of  formula \eqref{eq:special} is postponed to \S \ref{subsec:proof}. Before that, we need to establish a few technical results.

\subsection{From $\Q$-homology handlebodies to $\Q$-Lagrangian cobordisms}

The following will be useful to find appropriate boundary parameterizations of $\Q$-homology handlebodies.

\begin{lemma}
\label{lem:realizing_Lagrangian}
Let $\Sigma$ be a compact connected oriented surface of genus $g$ with $\partial \Sigma \cong S^1$,
and let $L^\Q$ be a Lagrangian subspace of $H_1(\Sigma;\Q)$.
Then there exists an orientation-preserving homeomorphism $s:  F_g  \to \Sigma$ 
such that $s_*(\alpha_1), \dots , s_*(\alpha_g)$ span $L^\Q$.
\end{lemma}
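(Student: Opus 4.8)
The plan is to build the desired homeomorphism as a composition $s = s_0 \circ h$, where $s_0 \colon F_g \to \Sigma$ is an arbitrary orientation-preserving homeomorphism and $h \colon F_g \to F_g$ is a self-homeomorphism that adjusts the image of the standard Lagrangian $A_g^\Q$. Since $\Sigma$ and $F_g$ are compact connected oriented surfaces of the same genus with a single boundary circle, the classification of surfaces furnishes an orientation-preserving homeomorphism $s_0 \colon F_g \to \Sigma$. The induced map $(s_0)_* \colon H_1(F_g;\Q) \to H_1(\Sigma;\Q)$ is then an isomorphism preserving the intersection pairing, so $L_0^\Q := (s_0)_*^{-1}(L^\Q)$ is a Lagrangian subspace of $H_1(F_g;\Q)$. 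It therefore suffices to produce an orientation-preserving $h \colon F_g \to F_g$ with $h_*(A_g^\Q) = L_0^\Q$, for then $s := s_0 \circ h$ satisfies $s_*(A_g^\Q) = L^\Q$, and the classes $s_*(\alpha_1), \ldots, s_*(\alpha_g)$ span $L^\Q$ as required.

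The heart of the argument is a passage from rational to integral data, since homeomorphisms of $F_g$ realize only \emph{integral} symplectic transformations. Set $\Lambda := H_1(F_g;\Z) \cong \Z^{2g}$ with its unimodular intersection form, and let $L_0^\Z := L_0^\Q \cap \Lambda$ be the saturation of $L_0^\Q$. I claim $L_0^\Z$ is a Lagrangian \emph{direct summand} of $\Lambda$ with $L_0^\Z \otimes \Q = L_0^\Q$. Indeed, $L_0^\Z$ is primitive by construction, has rank $g$, and is isotropic; since the form is unimodular its orthogonal complement also has rank $g$ and is primitive, and as $L_0^\Z$ is isotropic and primitive of the same rank it equals its own orthogonal complement. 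Completing a basis $e_1,\ldots,e_g$ of $L_0^\Z$ by lifts of the dual basis provided by the resulting perfect pairing between $L_0^\Z$ and $\Lambda/L_0^\Z$, and then correcting these lifts by elements of $L_0^\Z$ to kill the residual pairings, yields a symplectic basis $(e_1,\ldots,e_g,f_1,\ldots,f_g)$ of $\Lambda$ with $L_0^\Z = \langle e_1,\ldots,e_g\rangle$. As the standard Lagrangian $A_g^\Z := \langle \alpha_1,\ldots,\alpha_g\rangle$ is part of the symplectic basis $(\alpha,\beta)$, there is a symplectic automorphism $\phi \in \Sp(2g,\Z)$ of $\Lambda$ sending $\alpha_i \mapsto e_i$ and $\beta_i \mapsto f_i$; in particular $\phi(A_g^\Z) = L_0^\Z$, hence $\phi(A_g^\Q) = L_0^\Q$.

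It remains to realize $\phi$ geometrically. The symplectic representation of the mapping class group of $F_g$ on $\Lambda$ is surjective onto $\Sp(2g,\Z)$: Dehn twists generate the mapping class group and their images are the symplectic transvections, which in turn generate $\Sp(2g,\Z)$. Here one need not fix $\partial F_g$, since $[\partial F_g] = 0$ in $H_1(F_g;\Z)$ and hence the action on homology is insensitive to the behaviour near the boundary. Choosing an orientation-preserving homeomorphism $h \colon F_g \to F_g$ with $h_* = \phi$ — orientation-preserving because $\phi$ preserves the intersection form — completes the construction.

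The step I expect to be the main obstacle is precisely the rational-to-integral bridge of the second paragraph: a priori $L_0^\Q$ is only a rational Lagrangian, and an arbitrary element of $\Sp(2g,\Q)$ cannot be realized by a homeomorphism. The observation that dissolves this difficulty is that every rational Lagrangian is the rationalization of its unique saturation, which is automatically a \emph{unimodular} integral Lagrangian, so that transitivity of $\Sp(2g,\Z)$ on unimodular Lagrangians — and hence realizability by the mapping class group — suffices.
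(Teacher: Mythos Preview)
Your proof is correct and follows essentially the same approach as the paper's: both pass from the rational Lagrangian to its integral saturation $L^\Q \cap H_1(\Sigma;\Z)$, extend a basis of this to a symplectic $\Z$-basis (your ``correcting the lifts'' argument is exactly the paper's matrix manipulation $Q = R - R^t$), and then realize the resulting symplectic isomorphism by a homeomorphism via the classical Dehn--Nielsen-type fact. The only cosmetic difference is that the paper works directly on $\Sigma$ and produces $s$ in one step, whereas you first transport everything to $F_g$ via an auxiliary $s_0$ and then compose with a self-homeomorphism realizing an element of $\Sp(2g,\Z)$; this is the same argument.
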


\begin{proof}
Let $L := L^\Q \cap H_1(\Sigma;\Z)$. 
Then $L$ is an isotropic subgroup of $H_1(\Sigma;\Z)$,
its rank is $\dim_\Q(L\otimes \Q) = \dim_\Q(L^\Q) = g$
and  the quotient $H_1(\Sigma;\Z)/L$ is torsion-free.
(Therefore, $L$ is a Lagrangian subgroup of $H_1(\Sigma;\Z)$, i.e$.$ it is isotropic maximal.)

Let $m=(m_1,\dots, m_g)$ be a basis of the free abelian group $L$. 
Since $H_1(\Sigma;\Z)/L$ is free, we can complement $m$ to a basis $(m,p)$ of $H_1(\Sigma;\Z)$. 
The matrix of the intersection pairing of $\Sigma$ in the basis $(m,p)$ is of the form
$$
\left( \begin{array}{cc}
0 & P^t \\ -P & Q
\end{array} \right)
$$
where $P$ is unimodular and $Q$ is antisymmetric. 
We write $Q$ as $R - R^t$ where $R$ is lower triangular, and we observe that
$$
\left( \begin{array}{cc}
I & 0 \\  P^{-1} R^t (P^{-1})^{t} & P^{-1}
\end{array} \right)
\cdot \left( \begin{array}{cc}
0 &  P^t \\ -P & Q
\end{array} \right) \cdot  
\left( \begin{array}{cc}
I &   P^{-1} R (P^{-1})^t \\ 0 & (P^{-1})^t
\end{array} \right)
= \left( \begin{array}{cc}
0 & I \\ -I & 0
\end{array} \right).
$$
Thus, we can complement $m$ to another basis $(m,p')$ of $H_1(\Sigma;\Z)$
with respect to which the matrix of the intersection pairing is
$$
\left( \begin{array}{cc} 0 & I \\ -I & 0 \end{array} \right).
$$ 
Then the  isomorphism $H_1(F_g;\Z) \to H_1(\Sigma;\Z)$ defined by 
$\alpha_i \mapsto m_i$ and $\beta_i \mapsto p'_i$ preserves the intersection pairing.
So, by a classical theorem of Dehn and Nielsen, we can realize this isomorphism
by an orientation-preserving homeomorphism $s:F_g \to \Sigma$. The conclusion follows since, by construction, we have
$$
\big\langle s_*(\alpha_1), \dots , s_*(\alpha_g) \big\rangle = \langle m_1,\dots, m_g \rangle 
= L \ \subset H_1(\Sigma;\Z).
$$

\up
\end{proof}

We now explain how to turn $\Q$-homology handlebodies into $\Q$-Lagrangian cobordisms.

\begin{lemma}
\label{lem:QHH_to_QLCob}
Let $C'$ be a $\Q$-homology handlebody of genus $g$. Then there exists
a boundary parameterization $c': \partial C^{g}_0 \to  C'$ such that $(C',c') \in \QLCob(g,0)$.
\end{lemma}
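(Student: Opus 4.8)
The plan is to invoke Lemma \ref{lem:realizing_Lagrangian} to parameterize $\partial C'$ in such a way that the meridians span the Lagrangian $\mathbf{L}_{C'}^\Q$, and then to check that this single requirement already forces the defining conditions of $\QLCob(g,0)$. First I would record what those conditions amount to in the present situation. Since $F_0$ is a disk we have $A_0^\Q = 0$, so for a boundary parameterization $c': \partial C_0^g \to C'$ with restrictions $c'_\pm$, condition (2) reduces to $c'_{+,*}(A_g^\Q) = 0$ and condition (1') reduces to the requirement that $c'_{+,*}: B_g^\Q \to H_1(C';\Q)$ be an isomorphism. In other words, it suffices to parameterize $\partial C'$ so that the meridians $\alpha_1, \dots, \alpha_g$ are sent into $\mathbf{L}_{C'}^\Q = \ker(\incl_*)$ and the parallels $\beta_1, \dots, \beta_g$ are sent to a basis of $H_1(C';\Q)$.

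Since $H_1(C',\partial C';\Q) \simeq H^2(C';\Q) = 0$, the map $\incl_*: H_1(\partial C';\Q) \to H_1(C';\Q)$ is surjective; combined with half-lives-half-dies and $\chi(\partial C') = 2\chi(C') = 2(1-g)$, this shows that $\partial C'$ is connected of genus $g$ and that $\dim \mathbf{L}_{C'}^\Q = g$. Next I would remove the interior of a small embedded disk $d \subset \partial C'$ to obtain a compact connected oriented surface $\Sigma := \partial C' \setminus \operatorname{int}(d)$ of genus $g$ with $\partial \Sigma \cong S^1$; because capping off a disk changes neither $H_1$ nor the intersection pairing, $\mathbf{L}_{C'}^\Q$ remains a Lagrangian subspace of $H_1(\Sigma;\Q) = H_1(\partial C';\Q)$. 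Applying Lemma \ref{lem:realizing_Lagrangian} to $\Sigma$ and $L^\Q := \mathbf{L}_{C'}^\Q$ then yields an orientation-preserving homeomorphism $s: F_g \to \Sigma$ such that $s_*(\alpha_1), \dots, s_*(\alpha_g)$ span $\mathbf{L}_{C'}^\Q$.

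It remains to assemble $s$ into a boundary parameterization. Recall that $\partial C_0^g = -F_0 \cup_{S\times\{-1\}}(S \times [-1,1]) \cup_{S\times\{1\}} F_g$, and that the cap $-F_0 \cup (S\times[-1,1])$ is a disk with boundary $S\times\{1\} = \partial F_g$. I would define $c'$ to agree with $s$ on the $F_g$-piece and to send this cap homeomorphically onto $d$, arranging the two pieces to match along $\partial F_g \mapsto \partial d$; after the standard adjustment this produces an orientation-preserving homeomorphism $c': \partial C_0^g \to \partial C'$, hence a boundary parameterization of $C'$ with top surface $c'_+(F_g) = \Sigma$. Condition (2) then holds because $c'_{+,*}(\alpha_i) = \incl_*(s_*(\alpha_i)) = 0$, while condition (1') holds because $\incl_*$ is surjective with kernel $\mathbf{L}_{C'}^\Q = \langle s_*(\alpha_1),\dots,s_*(\alpha_g)\rangle$, so the classes $\incl_*(s_*(\beta_j))$ form a basis of $H_1(C';\Q)$. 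Hence $(C',c') \in \QLCob(g,0)$.

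The genuine content is carried by Lemma \ref{lem:realizing_Lagrangian}, which realizes the Lagrangian by an orientation-preserving homeomorphism; the one point that needs care — and where I expect the only real bookkeeping — is gluing the disk-cap to the $F_g$-piece so that $c'$ is globally orientation-preserving onto $\partial C'$ endowed with its boundary orientation.
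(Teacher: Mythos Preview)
Your proof is correct and follows essentially the same approach as the paper: remove a disk from $\partial C'$, apply Lemma~\ref{lem:realizing_Lagrangian} to realize $\mathbf{L}_{C'}^\Q$ by the meridians, extend over the disk, and verify the $\QLCob$ conditions. The only differences are cosmetic---you verify condition~(1') where the paper verifies~(1), and you spell out why $\partial C'$ is connected of genus $g$ whereas the paper takes this for granted.
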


\begin{proof}
Let $\Sigma$ be the surface obtained from $\partial C'$ by removing an open disk.
The Lagrangian of $C'$, namely
$$
\mathbf{L}_{C'}^\Q =\Ker \left(\incl_*: H_1(\partial C';\Q) \longrightarrow H_1(C';\Q) \right),
$$ 
is a Lagrangian subspace of 
$H_1(\partial C';\Q)$ $\simeq H_1(\Sigma;\Q)$. So, by Lemma \ref{lem:realizing_Lagrangian}, 
we can find an orientation-preserving homeomorphism $s: F_{g} \to \Sigma$ 
such that $s_*(A_{g}^\Q) = \mathbf{L}_{C'}^\Q$. 
Next, seeing $\partial C^{g}_0$ as the union of $F_{g}$ with a closed disk,
we can extend $s$  to an orientation-preserving homeomorphism $\partial C^{g}_0 \to \partial C'$
which, together with the inclusion $\partial C' \subset C'$, defines a boundary parameterization $c':\partial C^{g}_0 \to C'$.
The cobordism $(C',c')$ is $\Q$-Lagrangian because
\begin{itemize}
\item[(1)] $H_1(C';\Q) =  0+ \incl_*\left( H_1(\Sigma;\Q) \right)
= c'_{-,*}(A_0^\Q) +  c_{+,*}'\! \left(H_1(F_{g};\Q)\right)$, 
\item[(2)] $c'_{+,*}(A_{g}^\Q) = \incl_*\circ  s_* (A_{g}^\Q) = \incl_*\big(\mathbf{L}_{C'}^\Q\big) \subset 0= c'_{-,*}(A_0^\Q).$
\end{itemize}
(For  (1), we have used the fact that $\incl_*:H_1(\partial C';\Q) \to H_1(C';\Q)$ is surjective.)
\end{proof}

\subsection{The ``Y'' part of the LMO functor}

The next lemma relates triple-cup products of closed oriented $3$-manifolds
to $\Ztilde_1$, the  i-degree one part of the LMO functor.

\begin{lemma}
\label{lem:triple-cup}
Let $\mathsf{C}=(C',C'')$ be a $\Q$-LP pair of genus $g$. 
Consider some boundary parameterizations
$$
c': \partial C^g_0 \longrightarrow  C', \quad
c'': \partial C^g_0 \longrightarrow C''
$$
that are compatible with the given identification $\partial C' \equiv \partial C''$ and that satisfy
$(C',c')\in \QLCob(g,0)$ and $(C'',c'') \in \QLCob(g,0)$.
Then, the triple-cup product form of the total manifold $C= (-C') \cup_\partial C''$ is given by
$$
\mu(C) = \Ztilde_1(C',c') - \Ztilde_1(C'',c'') \ \in  \AY_1(\set{g}^+) \simeq \Lambda^3 H_1(C;\Q).
$$
\end{lemma}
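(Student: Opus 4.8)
The plan is to compute both sides of the claimed identity $\mu(C) = \Ztilde_1(C',c') - \Ztilde_1(C'',c'')$ in terms of the same invariant of the total manifold $C$, namely its triple-cup-product form, and to match them. The key structural observation is that the i-degree one part $\AY_1(\set{g}^+)$ of the target consists of tripod ("Y") diagrams with three univalent vertices colored by $\set{g}^+ \simeq B_g^\Q$, which under the given identifications is exactly $\Lambda^3 H_1(C;\Q)$ via the canonical isomorphism $H_1(C';\Q) \simeq H_1(C;\Q)$. So both sides live in the same space and it suffices to evaluate the pairing of each side against an arbitrary triple of cohomology classes.

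First I would recall from the construction in \S\ref{sec:LMO_functor} what $\Ztilde_1(C',c')$ actually computes. Since $(C',c') \in \QLCob(g,0)$ has no bottom handles, the corresponding bottom-top tangle $(B',\gamma')$ has $\gamma'$ consisting only of top components, and $\Ztilde^Y_1$ picks out the lowest-degree non-strut contribution of the Kontsevich--LMO invariant $Z(B',\gamma')$. This i-degree one part is precisely the "Y" coefficient that, by the general theory relating the degree-one LMO/Aarhus data to Milnor invariants and triple-cup products (this is the content foreshadowed by Appendix \ref{sec:Milnor} and the Habegger--Masbaum circle of ideas), records the triple-linking/triple-cup-product structure of the filling. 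The plan is to identify $\Ztilde_1(C',c')$ with the triple-cup-product form $\mu(\hat{B'})$ of the closed manifold $\hat{B'}$ obtained by the plat closure, expressed in the homology basis determined by the parallels $\beta_j$ (equivalently, the meridian-dual basis realizing $\mathbf{L}_{C'}^\Q$ via Lemma \ref{lem:realizing_Lagrangian}).

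The main step then is a cut-and-paste / additivity argument for the triple-cup-product form. Writing $C = (-C') \cup_\partial C''$, I would express $\mu(C)$ in terms of contributions from the two pieces glued along $\partial C' \equiv \partial C''$. The crucial point is that the triple-cup product $\langle x \cup y \cup z, [C]\rangle$ decomposes additively when $C$ is split along a surface, with the two pieces contributing with opposite signs because of the orientation reversal on $C'$ (reflected in the notation $-C'$). Concretely, for cohomology classes on $C$ restricting to the common boundary, the obstruction-theoretic (Massey-product / Lagrangian-relative-cup-product) formula gives $\mu(C) = \mu_{C''} - \mu_{C'}$ where each $\mu_{C'}, \mu_{C''}$ is the relative triple-cup-product form of the respective handlebody relative to its boundary parameterization. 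Matching $\mu_{C'}$ with $\Ztilde_1(C',c')$ and $\mu_{C''}$ with $\Ztilde_1(C'',c'')$ under the color identification $\beta_j \mapsto j^+$ then yields the stated formula, the sign discrepancy being exactly accounted for by the $(-C')$ in the definition of the total manifold.

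\emph{The hard part} will be making the identification of $\Ztilde_1$ with a relative triple-cup-product form fully precise, and in particular checking that the two boundary terms combine into the \emph{absolute} form $\mu(C)$ of the closed total manifold with no leftover boundary correction. This requires that the two parameterizations $c'$ and $c''$ are compatible along $\partial C' \equiv \partial C''$ (which is part of the hypothesis) and that both realize the common Lagrangian $\mathbf{L}_{C'}^\Q = \mathbf{L}_{C''}^\Q$; the shared Lagrangian is precisely what ensures the relative forms glue to the absolute form rather than producing a boundary anomaly. I expect the cleanest route is to invoke the already-established low-degree behaviour of the Kontsevich--LMO invariant (the relation between its Y-part and Milnor/triple-cup data developed for the appendix) as a black box, so that this lemma reduces to the purely homological additivity of triple-cup products under gluing, which is then a routine Mayer--Vietoris computation.
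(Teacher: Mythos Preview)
Your overall strategy is the paper's: (a) an additivity/cut-and-paste identity for triple-cup products that splits $\mu(C)$ into a $C'$-piece minus a $C''$-piece, and (b) the identification of each piece with $\Ztilde_1$ via the Milnor/triple-cup material of Appendix~\ref{sec:Milnor}. So the plan is sound.

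Two places where your description drifts from what actually has to be done:

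\textbf{(i) The wrong closed manifold.} You propose to identify $\Ztilde_1(C',c')$ with $\mu(\hat B')$ ``of the closed manifold obtained by the plat closure''. But $\hat B'$ is the $\Q$-homology $3$-sphere obtained by gluing $S^3\setminus C_0^0$ to $B'$; its $H_1$ vanishes, so $\mu(\hat B')=0$. The correct closed manifold is the one obtained from $\hat B'$ by \emph{longitudinal surgery} along the plat closure $\hat\gamma'$, and this is exactly $(-C_0^g)\cup_{c'} C'$. The chain is: $\Ztilde_1(C',c')$ equals the i-degree~$1$ ``Y'' part of $\chi^{-1}Z(B',\gamma')$; since $\Lk_{B'}(\gamma')=0$, Lemma~\ref{lem:triple_Milnor_Y} identifies this (up to sign) with the Milnor triple numbers $\bar\mu_{ijk}(\hat\gamma')$; and by definition \eqref{eq:mu} these are the triple-cup products of the longitudinal-surgery manifold $(-C_0^g)\cup_{c'} C'$. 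One finds $\mu\big((-C_0^g)\cup_{c'} C'\big)=-\,\Ztilde_1(C',c')$.

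\textbf{(ii) ``Relative triple-cup-product form'' is not an off-the-shelf object.} For a $\Q$-homology handlebody the naive relative cup products $H^1(C',\partial C')\otimes H^1(C')^{\otimes 2}\to H^3(C',\partial C')$ vanish because $H^1(C',\partial C')=0$; so your $\mu_{C'}$ needs a definition. The paper makes it concrete by \emph{capping with the fixed standard handlebody} $C_0^g$: one computes triple-cup products inside the singular $3$-manifold $(-C_0^g)\cup_{c'\sqcup c''}(C'\sqcup C'')$, which contains the three closed manifolds $(-C_0^g)\cup_{c'} C'$, $(-C_0^g)\cup_{c''} C''$ and $C=(-C')\cup_\partial C''$. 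This yields immediately
\[
\mu(C)\;=\;\mu\big((-C_0^g)\cup_{c''} C''\big)\;-\;\mu\big((-C_0^g)\cup_{c'} C'\big),
\]
which combined with (i) gives $\mu(C)=\Ztilde_1(C',c')-\Ztilde_1(C'',c'')$. The compatibility of $c'$ and $c''$ and the shared Lagrangian are used exactly here, to make all the homology identifications (via $H_1(C')\simeq H_1(C)\simeq H_1(C'')$) consistent.

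So your outline is correct and is the paper's argument; just replace $\hat B'$ by $(-C_0^g)\cup_{c'} C'$ and realize your ``relative triple-cup form'' as the absolute form of this capped-off closed manifold.
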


\noindent
Here, the isomorphism between ${\AY_1(\set{g}^+)}$ and ${\Lambda^3 H_1(C;\Q)}$ is defined by 
\begin{equation}
\label{eq:Y_to_trivector}
\Ygraphtop{k^+}{j^+}{i^+} \longmapsto [c'_{+}(\beta_i)]  \wedge [c'_{+}(\beta_j)]  \wedge [c'_{+}(\beta_k)].
\end{equation}
Besides, the $\Q$-Lagrangian cobordisms $(C',c')$ and $(C'',c'')$ are equipped
with  an arbitrary non-associative word of  length $g$ in the single letter $\bullet$.

\begin{proof}[Proof of Lemma \ref{lem:triple-cup}]
The existence of the boundary parameterizations $c'$ and $c''$ 
follows from Lemma  \ref{lem:QHH_to_QLCob}.
Each of the closed $3$-manifolds 
$$
(-C_0^g) \cup_{c'} C', \quad (-C_0^g) \cup_{c''} C'' \quad \hbox{and} \quad C = (-C') \cup_\partial C'' 
$$
is contained in the singular $3$-manifold
$
(-C_0^g) \cup_{c' \sqcup c''} (C' \sqcup C'').
$
By computing triple-cup products in this topological space, we find that
$$
\mu\left((-C_0^g) \cup_{c''} C'' \right) - \mu\left((-C_0^g) \cup_{c'} C' \right)
= \mu(C) \ \in \ \Lambda^3 H_1(C;\Q)
$$
where the homology groups are identified 
through the following isomorphisms (which are all induced by inclusions):
$$
\left\{ \begin{array}{l}
\xymatrix{H_1\left((-C_0^g) \cup_{c'} C' ;\Q \right) & H_1(C';\Q) \ar[l]_-{\simeq} \ar[r]^-{\simeq} & H_1(C;\Q)} \\
\xymatrix{H_1\left((-C_0^g) \cup_{c''} C'' ;\Q \right) & H_1(C'';\Q) \ar[l]_-{\simeq} \ar[r]^-{\simeq} & H_1(C;\Q)}
\end{array} \right.
$$
Thus it suffices to show that 
$\mu\left((-C_0^g) \cup_{c'} C' \right) = - \Ztilde_1(C',c')$,
and similarly for $(C'',c'')$.

In order to prove this identity, we consider the  bottom-top tangle $(B,\gamma)$ 
corresponding to the cobordism $(C',c')\in \Cob(g,0)$ by the correspondence \eqref{eq:btT_to_Cob}.
In this case, there is no bottom component in $\gamma$ (i.e. $\gamma=\gamma^+$)
and, since $(C',c')$ is $\Q$-Lagrangian, we have 
$$
\Lk_B(\gamma) = \Lk_{\hat B}(\hat \gamma) = 0
$$
where $\hat \gamma \subset \hat B$ is the plat closure of $\gamma$ in the $\Q$-homology $3$-sphere  $\hat B$.
Recall  that $\Ztilde(C',c')$ is defined in \cite{CHM} as a certain renormalization 
of the Kontsevich--LMO invariant $\chi^{-1}Z(B,\gamma)$;
however this renormalization does not affect the ``Y'' part. 
Therefore, $\Ztilde_1(C',c')$ is the i-degree $1$ part of $\chi^{-1}Z(B,\gamma)$.
Since the non-diagonal coefficients of $\Lk_B(\gamma)$ are trivial,
the oriented link $\hat{\gamma}$ is  algebraically-split.
Hence we can apply Lemma \ref{lem:triple_Milnor_Y} 
to  deduce that $-\Ztilde_1(C',c')$ is the linear combination of Y-shaped diagrams
encoding Milnor's triple linking numbers of $\hat{\gamma}$ in $\hat{B}$.
The closed oriented $3$-manifold $(-C_0^g) \cup_{c'} C'$ 
is obtained from $\hat{B}$ by surgery along the framed link $\hat{\gamma}$.
Since the $i$-th diagonal coefficient of $\Lk_B(\gamma)$ 
-- i.e$.$ the framing number of $\hat \gamma_i$ -- is trivial, 
the parallel of the framed knot $\hat \gamma_i$   
is also the longitude of $\hat \gamma_i$ in the $\Q$-homology $3$-sphere $\hat B$:
therefore, the surgery along the framed link $\hat{\gamma}$ is a longitudinal surgery.
Remembering now the exact connection \eqref{eq:mu} between Milnor's triple linking numbers and triple-cup products, 
we conclude that  $\mu\left((-C_0^g) \cup_{c'} C' \right) = - \Ztilde_1(C',c')$. 
The same conclusion applies to $(C'',c'')$ with the same arguments.
\end{proof}

\subsection{Proof of the special case}   \label{subsec:proof}

We can now prove Lemma \ref{lem:special}, which will finish the proof of Theorem \ref{th:general}.
Let  $w$ and $v$ be non-associative words in the single letter $\bullet$ of length $g$ and $f$ respectively.
Let  $(M,m)\in \QLCob_q(w,v)$ 
and let $\mathsf{C}=(\mathsf{C}_1,\dots, \mathsf{C}_r)$ be a family of  $\Q$-LP pairs 
where $C_i' \subset \operatorname{int}(M)$ and $C_i'\cap C_j' = \varnothing$ for all $i\neq j$.

We denote by $e_1,\dots,e_r$ the genus of the $\Q$-homology handlebodies $C'_1,\dots,C'_r$ respectively.
We apply Lemma \ref{lem:QHH_to_QLCob} to each of $C'_1,\dots,C'_r$ 
and find boundary parameterizations $c'_1,\dots,c'_r$ such that
$$
(C'_1,c'_1) \in \QLCob(e_1,0),\dots,(C'_r,c'_r) \in \QLCob(e_r,0).
$$
For any $i\in \{1,\dots,r\}$, let $c''_i:\partial C_0^{e_i} \to C''_i$ be the boundary parameterization corresponding 
to $c'_i$ through the identification $\partial C'_i \equiv \partial C''_i$. Then, by definition of a $\Q$-LP pair, we also have
$$
(C''_1,c''_1) \in \QLCob(e_1,0),\dots,(C''_r,c''_r) \in \QLCob(e_r,0).
$$

We consider next a collar neighborhhood  $m_-(F_f)\times [-1,0]$  in $M$ of the bottom surface $m_-(F_f)\equiv m_-(F_f) \times \{-1\}$, 
we pick $r$ pairwise-disjoint closed disks on  $m_-(F_f) \times \{0\}$
and we connect them  to the disks $c'_{1,-}(F_0) \subset \partial C'_1, \dots, c'_{r,-}(F_0) \subset \partial C'_r$ 
by pairwise-disjoint solid tubes $T_1,\dots, T_r$ in the exterior of $(m_-(F_f)\times [-1,0]) \cup C'_1 \cup \cdots \cup C'_r$.
Thus we obtain a decomposition of $(M,m)$ in the monoidal category $\Cob$:
\begin{equation}\label{eq:decomposition}
(M,m) = \left( (C'_1,c'_1) \otimes \cdots \otimes (C'_r,c'_r) \otimes \Id_f\right) \circ (N,n)
\end{equation}
where $\Id_f$ denotes the identity of $f$ in $\Cob$ and $(N,n) \in \Cob(g,e+f)$ with $e:= e_1 + \cdots + e_r$.
(Here $N$ corresponds to the exterior in $M$ of the union of 
$(m_-(F_f)\times [-1,0])$,  $C'_1 \cup \cdots \cup C'_r$ and  $T_1 \cup \cdots \cup T_r$.)
In fact, we have $(N,n) \in \QLCob(g,e+f)$ so that \eqref{eq:decomposition} is actually 
a decomposition in the subcategory $\QLCob$.
To check this, we consider the bottom-top tangles 
$(B,\gamma)$ and $(D,\upsilon)$ corresponding to $M$ and $N$ respectively.
Then
$$
D = C_0^{e+f} \circ N \circ C_g^0 = \left((C_0^{e_1} \otimes \cdots \otimes C_0^{e_r}) \otimes C_0^f\right)\circ N   \circ C_g^0
$$
can be obtained by $r$ $\Q$-LP surgeries from
$$
\left((C'_1\otimes \cdots \otimes C'_r) \otimes C_0^f\right)\circ N   \circ C_g^0 
\stackrel{\eqref{eq:decomposition}}{=} C_0^f \circ M \circ C_g^0 = B,
$$
and these surgeries transform the top tangle $\upsilon^+ \subset D$ into the top tangle $\gamma^+ \subset B$.
The cobordism $M$ being $\Q$-Lagrangian, $B$ is a $\Q$-homology cube and $\Lk_B(\gamma^+)=0$.
Since  any $\Q$-LP surgery preserves the $\Q$-homology type as well as linking numbers (see Lemma \ref{lem:pre_Q-LP}), 
we deduce that $D$ is a $\Q$-homology cube and $\Lk_D(\upsilon^+)=0$. 
Hence the cobordism $N$ is $\Q$-Lagrangian.

In order to apply the LMO functor,
we  choose for every $i\in \{1,\dots, r\}$ a non-associative word $u_i$ of length $e_i$ in the single letter $\bullet$,
with which we equip the $\Q$-Lagrangian cobordisms $(C'_i,c'_i)$ and $(C''_i,c''_i)$.
The $\Q$-Lagrangian cobordism $(N,n)$ is  equipped with the non-associative word
$(\cdots ((u_1u_2)u_3)\cdots u_r)v$. 
Then \eqref{eq:decomposition} is refined to the following decomposition in the category $\QLqCob$: 
$$
M  = \left( \Big( \cdots \big( ( C'_1 \otimes C'_2 ) \otimes C'_3 \big)
  \cdots \otimes C'_r \Big) \otimes \Id_v \right) \circ N.
$$
For every subset $I \subset \{1,\dots,r\}$, we have the same formula for the cobordism $M_{\mathsf{C}_I}$
except that  $C'_i$ is now replaced by $C''_i$ for all $i\in I$. 
Then, by applying the tensor-preserving functor $\Ztilde$, we obtain that
$$
\sum_{I \subset \{1,\dots,r\} } (-1)^{|I|} \cdot \Ztilde(M_{\mathsf{C}_I})
= \sum_{I \subset \{1,\dots,r\} } (-1)^{|I|}  \cdot
\left( \Ztilde(C^{?}_1) \otimes  \cdots \otimes \Ztilde(C^?_r) \otimes \Id_f \right) \circ \Ztilde(N)
$$
where each question mark should be replaced by a prime or a double prime (depending on the subset $I$).
Using the bilinearity of $\circ$ and $\otimes$ in the category $\tsA$, we deduce that
$$
\sum_{I \subset \{1,\dots,r\} } (-1)^{|I|} \cdot \Ztilde(M_{\mathsf{C}_I})
=  \Big(\big(\Ztilde(C'_1) - \Ztilde(C''_1)\big) \otimes \cdots \otimes 
\big(\Ztilde(C'_r) - \Ztilde(C''_r)\big)\otimes \Id_f \Big) \circ \Ztilde(N).
$$
and, using Lemma \ref{lem:triple-cup}, we get
$$
\sum_{I \subset \{1,\dots,r\} } (-1)^{|I|} \cdot \Ztilde(M_{\mathsf{C}_I}) = 
(\mu(C_1) \otimes \cdots \otimes \mu(C_r) \otimes \Id_f)\circ \Ztilde(N) + (\ideg > r)
$$
where each $\mu(C_i)$ is regarded as an element of $\A^Y(\set{e_i}^+) \subset \tsA(e_i,0)$ by \eqref{eq:Y_to_trivector}.
For the sequel, it will be convenient to decompose the set of colors $\set{e}^+=\{1^+,\dots,e^+\}$ into
$$
\underbrace{\{1^+,\dots ,e_1^+\}}_{E_1} \cup \underbrace{\{(e_1+1)^+,\dots ,(e_1+e_2)^+\}}_{E_2} \cup \cdots
\cup \underbrace{\left\{\left(\sum_{i=1}^{r-1}e_i+1\right)^+,\dots, \left(\sum_{i=1}^{r-1}e_i + e_r \right)^+ \right\}}_{E_r}.
$$
Thus, by definition of the tensor product in $\tsA$, we obtain that
\begin{equation}
\label{eq:decomposition_bis}
\sum_{I \subset \{1,\dots,r\} } (-1)^{|I|} \cdot \Ztilde(M_{\mathsf{C}_I}) \stackrel{r}{\equiv}
\Big(\mu(C_1) \sqcup \cdots \sqcup \mu(C_r) 
\sqcup \exp_\sqcup\big(\sum_{i=e+1}^{e+f} \strutgraph{i^-}{i^+}\big)\Big)
\circ \Ztilde(N) 
\end{equation}
where the symbol $\stackrel{r}{\equiv}$ means an identity modulo terms of i-degree $>r$
and each $\mu(C_i)$ is now regarded as an element of $\A^Y(E_i)$.

To proceed, we use again 
the bottom-top tangles $(B,\gamma)$ and $(D,\upsilon)$ corresponding to the cobordisms $M$ and $N$, respectively.
According to \eqref{eq:sqcup}, the ``strut'' part of $\Ztilde(N)$ is $\exp_\sqcup(\Lk_D(\upsilon)/2)$
so that  \eqref{eq:decomposition_bis} simplifies to 
\begin{equation}
\label{eq:decomposition_ter}
\sum_{I \subset \{1,\dots,r\} } (-1)^{|I|} \cdot \Ztilde(M_{\mathsf{C}_I}) \stackrel{r}{\equiv}
\Big(\mu(C_1) \sqcup \cdots \sqcup \mu(C_r) 
\sqcup \exp_\sqcup\big(\sum_{i=e+1}^{e+f} \strutgraph{i^-}{i^+}\big)\Big)
\circ \exp_\sqcup(\Lk_D(\upsilon)/2).
\end{equation}
We denote by $(\delta_1,\dots, \delta_r)$ the first $e$ components of the $(e+f)$-component tangle $\upsilon^-$. 
As we observed above, $D$ can be obtained from $B$ by $\Q$-LP surgeries: 
the tangles $\upsilon^+$ and $\upsilon^-\setminus \delta$ 
correspond through these surgeries to $\gamma^+$ and $\gamma^-$, respectively.
Since a $\Q$-LP surgery preserves linking numbers,
the symmetric matrix $\Lk_D(\upsilon)$  -- whose rows and columns are indexed by 
$\pi_0(\upsilon^+)\cup \pi_0(\delta) \cup  \pi_0(\upsilon^-\setminus \delta)$ --
can be decomposed as follows:
$$
\Lk_D(\upsilon) = \left(\begin{array}{ccc}
0&  \Lk_D(\upsilon^+,\delta)  &  \Lk_B(\gamma^+,\gamma^-) \\
\Lk_D(\delta,\upsilon^+)   &  \Lk_D(\delta) &   \Lk_D(\delta,\upsilon^-\setminus \delta) \\
 \Lk_B(\gamma^-,\gamma^+)  &\Lk_D(\upsilon^-\setminus \delta ,\delta) & \Lk_B(\gamma^-) 
\end{array}\right)
$$
Observe that the corner blocks of that matrix constitute $\Lk_B(\gamma)$.
Next, there exist simple combinatorial rules to compute 
compositions in the category $\tsA$ of the form $$(\exp_{\sqcup }(H/2) \sqcup  h^Y) \circ (\exp_{\sqcup }(J/2) \sqcup  j^Y),$$
where $H,J$ are rational matrices (interpreted as linear combinations of struts)
and $h^Y,j^Y$ have no strut component: see \cite[Lemma 4.5]{CHM}.
Applying these formulas to the right-hand side of \eqref{eq:decomposition_ter}, we obtain 
$$
\sum_{I \subset \{1,\dots,r\} } (-1)^{|I|} \cdot \Ztilde(M_{\mathsf{C}_I}) \stackrel{r}{\equiv}
\exp_\sqcup\left(\Lk_B(\gamma)/2\right)
\sqcup 
\tilde\rho_{\mathsf{C}}\Bigg(\!\!\begin{array}{c} \hbox{\small sum of all ways of gluing  }\\
\hbox{\small  some legs of $\mu(\mathsf{C})$ with $\Lk_D(\delta)/2$} \end{array}\!\!\Bigg)
$$
where $\mu(\mathsf{C}) = \mu(C_1) \sqcup \cdots \sqcup \mu(C_r)$ is regarded as an element of $\A^Y(\set{e}^+)$,
the symmetric matrix $\Lk_D(\delta)/2$ is seen as a symmetric bilinear form on the vector space $\Q \cdot \set{e}^+$ and 
$\tilde\rho_{\mathsf{C}}:\A^Y(\set{e}^+) \to \A^Y(\set{g}^+\cup \set{f}^-)$ changes the colors as follow:
\begin{equation}\label{eq:change_colors_bis}
\forall l^+\in \set{e}^+, \quad l^+ \longmapsto  \sum_{j=1}^g \Lk_D(\upsilon^+_j, \delta_l)\cdot  j^+ +
\sum_{i=1}^f \Lk_D\left(\upsilon^-_{e+i},\delta_l\right) \cdot i^-.
\end{equation}
Since the strut part $\Ztilde^s$ of $\Ztilde$ is preserved under $\Q$-LP surgery, we obtain that
\begin{equation}\label{eq:final}
\sum_{I \subset \{1,\dots,r\} } (-1)^{|I|} \cdot \ZtildeY(M_{\mathsf{C}_I}) \stackrel{r}{\equiv}
\tilde\rho_{\mathsf{C}}\Bigg(\!\!\begin{array}{c} \hbox{\small sum of all ways of gluing  }\\
\hbox{\small  some legs of $\mu(\mathsf{C})$ with $\Lk_D(\delta)/2$} \end{array}\!\!\Bigg).
\end{equation}

It remains  to relate $\tilde \rho_{\mathsf{C}}$ to $\rho_{\mathsf{C}}$, and $\Lk_D(\delta)$ to  $\ell_M(\mathsf{C})$.
For the first relation,  observe that we have the following identity in $H_1(M;\Q)$
for all $k\in \{1,\dots, r\}$  and for all $l\in \{1,\dots,e_k\}$, where we set $\bar l := l + \sum_{s=1}^{k-1} e_s \in \{1,\dots,e\}$:
\begin{eqnarray*}
\incl_*([c'_{k,+}(\beta_l)]) &= &
\sum_{j=1}^g \Lk_B\big(c'_{k,+}(\beta_l), \gamma_j^+\big) \cdot [m_+(\beta_j)] 
+ \sum_{i=1}^f   \Lk_B\big(c'_{k,+}(\beta_l), \gamma_i^-\big)  \cdot [m_-(\alpha_i)] \\
&=& \sum_{j=1}^g \Lk_D\big(n_{-}(\beta_{\bar l}), \upsilon_j^+\big) \cdot [m_+(\beta_j)] 
+ \sum_{i=1}^f   \Lk_D\big(n_{-}(\beta_{\bar l}), \upsilon_{e+i}^-\big)  \cdot [m_-(\alpha_i)] \\
&=&  \sum_{j=1}^g \Lk_D\big(\delta_{\bar l}, \upsilon_j^+\big) \cdot [m_+(\beta_j)] 
+ \sum_{i=1}^f   \Lk_D\big(\delta_{\bar l}, \upsilon_{e+i}^-\big)  \cdot [m_-(\alpha_i)].
\end{eqnarray*}
Thus, by comparing \eqref{eq:change_colors_bis} to \eqref{eq:change_colors},
we see that $\tilde \rho_{\mathsf{C}}$ corresponds to $\rho_{\mathsf{C}}$ 
through the  isomorphism $\A^Y(\set{e}^+) \simeq \A(H_1(C;\Q))$ 
defined by the change of colors $\bar l^+ \mapsto [c'_{k,+}(\beta_l)]$ for all $k\in \{1,\dots, r\}$  and for all $l\in \{1,\dots,e_k\}$.
We now relate $\Lk_D(\delta)$ to  $\ell_M(\mathsf{C})$ and, for this,
we use the following notation: 
for any two colors $a,b \in \set{e}^+$,  we shall write $a \sim b$
if and only if $a,b$ belong to the same subset $E_k$  for a $k\in \{1,\dots,r\}$. 
If $a \sim b$, then gluing an $a$-colored  vertex to a $b$-colored  vertex in $\mu(\mathsf{C})$ 
does not contribute to the right-hand side term of \eqref{eq:final} due to the AS relation.
Consider the case $a\nsim b$: we assume that $a=\bar u\in E_x$ and $b = \bar v\in E_y$ with $x\neq y$, $u\in \set{e_x}$ and $v \in \set{e_y}$.
Then we have
\begin{eqnarray*}
\Lk_D(\delta_a,\delta_b) 
\ = \ \Lk_{D}(\hbox{\small parallel of } \delta_a,\hbox{\small parallel of } \delta_b) &=&
\Lk_{D} \big(n_-(\beta_a),n_-(\beta_b)\big)\\
&=& \Lk_{B} \big(n_-(\beta_a),n_-(\beta_b)\big) \\
&= &\Lk_{M}\left(c'_{x,+}(\beta_u), c'_{y,+}(\beta_v)\right),
\end{eqnarray*}
where the last equality follows from Lemma \ref{lem:lk}. 
Thus the non-diagonal blocks of the matrix of the pairing $\ell_{ M}(\mathsf{C}):H_1({C};\Q) \times H_1({C};\Q) \to \Q$ in the basis 
$$\big(c'_{1,+}(\beta_1),\dots,c'_{1,+}(\beta_{e_1}),\dots, c'_{r,+}(\beta_1),\dots,c'_{r,+}(\beta_{e_r})\big)$$ 
are  the non-diagonal blocks of $\Lk_D(\delta)$.
This concludes the proof of Lemma \ref{lem:special}.

\appendix

\section{Linking numbers in $\Q$-homology handlebodies}     \label{sec:lk}

In this appendix, we recall the definition of linking numbers in a $\Q$-homology handlebody,
we study the ambiguity inherent to this definition and we prove a few properties which are needed to establish the splitting formulas in their full generality.

\subsection{Definition}  \label{subsec:def_lk}

We mainly follow Cimasoni \& Turaev \cite{CT}.
Let $M$ be a $\Q$-homology handlebody with Lagrangian  $\mathbf{L}:= \mathbf{L}_M^\Q \subset H_1(\partial M;\Q)$.  A subspace $\mathbf{E}$ of  $H_1(\partial M;\Q)$ is said to be \emph{essential} 
if the restriction of $\incl_*:H_1(\partial M;\Q) \to H_1(M;\Q)$ to $\mathbf{E}$ is an isomorphism onto $H_1(M;\Q)$ or, equivalently, if $H_1(\partial M;\Q) = \mathbf{L} \oplus \mathbf{E}$;
the corresponding section of $\incl_*$  is denoted by the lower-case letter $e: H_1(M;\Q) \to H_1(\partial M;\Q)$.

As explained in \cite[\S 1]{CT}, any essential subspace $\mathbf{E}$ of  $H_1(\partial M;\Q)$ defines a notion of ``generalized linking number'' in $M$.
Specifically, the \emph{linking number}   of  two disjoint, oriented knots $K,L\subset  \operatorname{int}(M)$ 
is the unique number $\Lk_M^{\mathbf{E}}(K,L)\in \Q$ satisfying
\begin{equation} \label{eq:lk^S}
[L] = \Lk_M^{\mathbf{E}}(K,L) \cdot [m_K] +   \incl_* e([L]) \in H_1(M\setminus K;\Q).
\end{equation}
Here $m_K$ is the oriented meridian of $K$, $\incl_*$ is  induced by the inclusion $\partial M \subset  M\setminus K$ 
and we use the following fact:  the long exact sequence in homology for the pair $(M,M\setminus K)$ gives a short exact sequence
$$
\xymatrix{
0 \ar[r] & \Q [m_K] \ar[r] & H_1\left(M\setminus K;\Q\right) \ar[r]^-{\incl_*} & H_1(M;\Q) \ar[r] & 0.
}
$$
The generalized linking number can be computed by the formula
\begin{equation} \label{eq:lk_as_intersection}
\Lk_M^{\mathbf{E}}(K,L)  = K \mediumdot_{\!M}\, D,
\end{equation}
where $D$ is a rational $2$-chain in $M$ transversal to $K$ such that the $1$-cycle $\partial D - L$ is supported in $\partial M$ 
and represents an element of $\mathbf{E} \subset H_1(\partial M;\Q)$.
It is easily observed (see \cite[\S 1.3]{CT})  that
\begin{equation}\label{eq:sym}
\Lk_M^{\mathbf{E}}(L,K) - \Lk_M^{\mathbf{E}}(K,L) = e([L]) \mediumdot_{\!\partial M}\, e([K])
\end{equation}
where $\mediumdot_{\!\partial M}$ denotes the homology intersection in $\partial M$. 
In particular, the  invariant $\Lk_M^{\mathbf{E}}(-,-)$ is symmetric if   $\mathbf{E}$ is isotropic (and hence Lagrangian) for the symplectic form $ \mediumdot_{\!\partial M}$.

\subsection{Dependence on the essential subspace}

Let $M$ be a $\Q$-homology handlebody with Lagrangian  $\mathbf{L}:= \mathbf{L}_M^\Q \subset H_1(\partial M;\Q)$.
We now study how generalized linking numbers depend on the choice of the essential subspace in $H_1(\partial M;\Q)$.
First,  we emphasize the homological nature of linking numbers.

\begin{lemma} \label{lem:P&Q}
Let $P,Q\subset \operatorname{int}(M)$  be two submanifolds of dimension $3$ such that $P \cap Q =Ê\varnothing$,
and let ${\mathbf{E}}$ be an essential subspace of $H_1(\partial M;\Q)$.
Then, there is a unique bilinear map $\ell_{M}^{\mathbf{E}}:H_1(P;\Q) \times H_1(Q;\Q) \to \Q$ such that
$$
\ell_{M}^{\mathbf{E}}([K],[L])=\Lk_M^{\mathbf{E}}(K,L) 
$$ 
for any oriented knots $K\subset P$ and $L\subset Q$.
\end{lemma}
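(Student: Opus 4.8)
The plan is to extend $\Lk_M^{\mathbf{E}}$ to a $\Q$-bilinear pairing defined on rational $1$-cycles supported in $P$ and in $Q$, and then to check that this pairing descends to homology. For a rational $1$-cycle $a$ in $P$ and a rational $1$-cycle $b$ in $Q$ (automatically disjoint since $P\cap Q=\varnothing$), I would set
$$
\ell_M^{\mathbf{E}}(a,b) := a \mediumdot_{\!M}\, D,
$$
where $D$ is any rational $2$-chain in $M$, transversal to $a$, such that $\partial D - b$ is supported in $\partial M$ and represents a class in $\mathbf{E}$. Such a $D$ exists: since $\incl_*|_{\mathbf{E}}$ is onto, $b$ is homologous in $M$ to a cycle $\tilde b\subset \partial M$ with $[\tilde b]\in \mathbf{E}$, and any $2$-chain realizing this homology works. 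By formula \eqref{eq:lk_as_intersection}, this agrees with $\Lk_M^{\mathbf{E}}(K,L)$ when $a=K$ and $b=L$ are knots. Uniqueness of $\ell_M^{\mathbf{E}}$ is then immediate, since the classes of knots span $H_1(P;\Q)$ and $H_1(Q;\Q)$, so a bilinear map is determined by its values on knots.

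The first and main point is that $\ell_M^{\mathbf{E}}(a,b)$ is independent of the auxiliary chain $D$. If $D$ and $D'$ are two admissible choices for the same $b$, then $D-D'$ is a relative $2$-cycle and defines a class $\xi \in H_2(M,\partial M;\Q)$. Because $M$ is a $\Q$-homology handlebody we have $H_2(M;\Q)=0$, so the connecting homomorphism $\partial_*: H_2(M,\partial M;\Q)\to H_1(\partial M;\Q)$ is injective with image $\Ker(\incl_*)=\mathbf{L}$. On the one hand $\partial_*\xi = [\partial D - b]-[\partial D'-b]$ is a difference of two elements of $\mathbf{E}$, hence lies in $\mathbf{E}$; on the other hand $\partial_*\xi\in\mathbf{L}$. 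As $\mathbf{E}$ is essential, $\mathbf{L}\cap\mathbf{E}=0$, so $\partial_*\xi=0$ and therefore $\xi=0$. Since the intersection number of a $1$-cycle $a\subset\operatorname{int}(M)$ with a relative $2$-cycle depends only on $[a]\in H_1(M;\Q)$ and on $\xi\in H_2(M,\partial M;\Q)$ (Poincar\'e--Lefschetz duality), we conclude $a \mediumdot_{\!M}\,(D-D')=0$.

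It remains to check that $\ell_M^{\mathbf{E}}(a,b)$ depends only on $[a]\in H_1(P;\Q)$ and $[b]\in H_1(Q;\Q)$; bilinearity then follows from additivity of chains and bilinearity of the intersection product. If $b$ is replaced by a homologous cycle $b'=b-\partial\tau$ with $\tau$ a $2$-chain in $Q$, then $D-\tau$ is admissible for $b'$ and the correction $a\mediumdot_{\!M}\,\tau$ vanishes since $a\subset P$ and $\tau\subset Q$ are disjoint. If $a$ is replaced by $a'=a-\partial\sigma$ with $\sigma$ a $2$-chain in $P$, then $\sigma$ is disjoint from the support of $\partial D$, which lies in $Q\cup\partial M$, whereas $\sigma\subset P\subset\operatorname{int}(M)$. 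Hence the compact $1$-manifold $\sigma\cap D$ (taken transversally) has boundary equal to $\partial\sigma\cap D$ counted with signs, and the total signed count of boundary points of a compact $1$-manifold is zero, giving $(\partial\sigma)\mediumdot_{\!M}\,D=0$.

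The main obstacle is the independence of the chain $D$: this is precisely where the hypothesis that $\mathbf{E}$ is \emph{essential}, i.e.\ $H_1(\partial M;\Q)=\mathbf{L}\oplus\mathbf{E}$, is genuinely used, through the vanishing $\mathbf{L}\cap\mathbf{E}=0$ together with the identification $H_2(M,\partial M;\Q)\cong\mathbf{L}$ coming from $H_2(M;\Q)=0$. The descent in the $a$-variable, by contrast, rests only on the topological hypotheses $P\cap Q=\varnothing$ and $P\subset\operatorname{int}(M)$, via the elementary fact that a compact $1$-manifold has signed boundary zero.
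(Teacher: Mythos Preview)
Your proof is correct, but it follows a different route from the paper's. The paper argues algebraically: it uses the defining relation \eqref{eq:lk^S} to see that $\Lk_M^{\mathbf E}(K,L)$ depends only on $[L]\in H_1(Q;\Q)$ for fixed $K$, and then invokes the symmetry identity \eqref{eq:sym}, namely $\Lk_M^{\mathbf E}(L,K)-\Lk_M^{\mathbf E}(K,L)=e([L])\mediumdot_{\partial M}e([K])$, to swap roles and get dependence only on $[K]$ in the first variable (the correction term being manifestly bilinear on homology). Your argument is instead geometric and self-contained: you extend \eqref{eq:lk_as_intersection} to rational $1$-cycles, prove independence of the auxiliary $2$-chain $D$ via the identification $H_2(M,\partial M;\Q)\simeq\mathbf{L}$ and the transversality $\mathbf{L}\cap\mathbf{E}=0$, and then obtain descent in each variable directly from the disjointness hypotheses $P\cap Q=\varnothing$ and $P\subset\operatorname{int}(M)$. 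Your approach has the advantage of not relying on \eqref{eq:sym} and of making explicit where essentiality of $\mathbf{E}$ enters; the paper's approach is shorter once \eqref{eq:sym} is in hand and avoids any transversality bookkeeping with chains.
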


\begin{proof}
For $N=P$ or $Q$, denote by $\mathcal{K}_N$ the set of oriented knots in $N$ 
and observe that the map $\Q \times \mathcal{K}_N \to H_1(N;\Q)$ defined by $(p,K) \mapsto p[K]$ is surjective. Consider the map
$$
\tilde\ell_{M}^{\mathbf{E}}: (\Q \times \mathcal{K}_P) \times (\Q \times \mathcal{K}_Q) \longrightarrow \Q, \ \big((p,K),(q,L)\big) \longmapsto pq \Lk_M^{\mathbf{E}}(K,L).
$$
It follows from \eqref{eq:lk^S} that
$$
\tilde\ell_{M}^{\mathbf{E}}\big((p,K),(q,L)\big) \cdot [m_K] = p\cdot (q[L]) - p \cdot \incl_* e(q[L])   \in H_1(M\setminus K;\Q)
$$
which shows that, for fixed $(p,K)$, the number $\tilde\ell_{M}^{\mathbf{E}}((p,K),(q,L))$ only depends (linearly) on $q[L] \in H_1(Q;\Q)$ . Furthermore, we have
\begin{eqnarray*}
\tilde\ell_{M}^{\mathbf{E}}\big((p,K),(q,L)\big) & \stackrel{\eqref{eq:sym}}{=} & \tilde\ell_{M}^{\mathbf{E}}\big((q,L),(p,K)\big) + e(q[K])\mediumdot_{\!\partial M}\, e(p[L])
\end{eqnarray*}
so that, for a fixed $(q,L)$, $\tilde\ell_{M}^{\mathbf{E}}((p,K),(q,L))$  only depends (linearly) on $p[K] \in H_1(P;\Q)$. 
Therefore, $\tilde\ell_{M}^{\mathbf{E}}$ factorizes to a unique bilinear map $\ell_{M}^{\mathbf{E}}:H_1(P;\Q) \times H_1(Q;\Q) \to \Q$.
\end{proof}

Next,  we define a  bilinear pairing in $H_1(\partial M;\Q)$.

\begin{lemma} \label{lem:ell_boundary}
Let ${\mathbf{E}}$ be an essential subspace of $H_1(\partial M;\Q)$.
The generalized linking number $\Lk_M^{\mathbf{E}}(-,-)$ induces a  bilinear form $\vartheta^{\mathbf{E}}_M:H_1(\partial M;\Q) \times H_1(\partial M;\Q) \to \Q$.
Moreover, the left radical	and right radical of $ \vartheta_M^{\mathbf{E}}$ are given by 
$$
\left\{x\in H_1(\partial M;\Q): \vartheta_M^{\mathbf{E}}(x,-)=0 \right\}=\mathbf{L} \quad \hbox{and}  \quad
\left\{y\in H_1(\partial M;\Q): \vartheta_M^{\mathbf{E}}(-,y)=0 \right\}= \mathbf{E},
$$
respectively,  and $\vartheta_M^{\mathbf{E}}$ is the opposite of the  intersection form $\mediumdot_{\!\partial M}$ on  $\mathbf{E} \times \mathbf{L}$.
\end{lemma}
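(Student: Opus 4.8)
The plan is to define the form $\vartheta_M^{\mathbf{E}}$ by pushing boundary cycles into the interior, to compute it separately on the two ``blocks'' $H_1(\partial M;\Q)\times \mathbf{E}$ and $H_1(\partial M;\Q)\times \mathbf{L}$, and then to extract the radicals by elementary symplectic linear algebra. First I would fix a collar $c:\partial M\times[0,1]\to M$ (with $c(\cdot,0)=\incl$ and the second coordinate increasing towards the interior) and set $P:=c(\partial M\times[1/2,2/3])$ and $Q:=c(\partial M\times[1/4,1/3])$, two disjoint $3$-dimensional submanifolds of $\operatorname{int}(M)$, each carrying a canonical isomorphism $H_1(\partial M;\Q)\simeq H_1(P;\Q)$ and $H_1(\partial M;\Q)\simeq H_1(Q;\Q)$ induced by the collar retraction. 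Applying Lemma \ref{lem:P&Q} to the pair $(P,Q)$ produces a bilinear map which, transported through these isomorphisms, is the desired form: concretely $\vartheta_M^{\mathbf{E}}(x,y)=\Lk_M^{\mathbf{E}}(K,L)$, where $K\subset P$ is a ``deep'' push-in of a representative of $x$ and $L\subset Q$ is a ``shallow'' push-in of a representative of $y$. This settles the first assertion (existence and bilinearity).

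Next I would treat the block where $y\in\mathbf{E}$. Representing $y$ by a cycle $L_0\subset\partial M$ and letting $A:=c(L_0\times[0,1/3])$ be the annulus joining $L_0$ to its push-in $L\subset Q$, the $1$-cycle $\partial A-L=-L_0$ lies on $\partial M$ and represents $-y\in\mathbf{E}$; hence $A$ is an admissible chain in \eqref{eq:lk_as_intersection} and $\vartheta_M^{\mathbf{E}}(x,y)=K\mediumdot_{\!M} A$. Since $A$ sits in the collar levels $[0,1/3]$ while $K$ lies at level $\geq 1/2$, this intersection is empty, so $\vartheta_M^{\mathbf{E}}(x,y)=0$ for every $x$; in particular the right radical already contains $\mathbf{E}$. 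For the other block I take $y\in\mathbf{L}=\Ker(\incl_*)$, so a representative $L_0\subset\partial M$ bounds a rational $2$-chain $\Sigma_0$ in $M$. Then $D:=A+\Sigma_0$ satisfies $\partial D=L$, so by \eqref{eq:lk_as_intersection} $\vartheta_M^{\mathbf{E}}(x,y)=K\mediumdot_{\!M} D$; the annulus summand contributes nothing as before, leaving $\vartheta_M^{\mathbf{E}}(x,y)=K\mediumdot_{\!M}\Sigma_0$. I would then localize this intersection at the level surface $S$ through $K$: after a transversality perturbation $\Sigma_0$ meets $S$ in a $1$-cycle $c_0:=\Sigma_0\cap S$, which — looking at the part of $\Sigma_0$ inside the sub-collar below $S$, whose boundary is $L_0\cup c_0$ — is homologous in $S\simeq\partial M$ to $L_0$, hence represents $y$. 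Since $K\subset S$, the points of $K\cap\Sigma_0$ are exactly those of $K\cap c_0$, and a local orientation computation identifies $K\mediumdot_{\!M}\Sigma_0$ with $-(x\mediumdot_{\!\partial M} y)$. Thus $\vartheta_M^{\mathbf{E}}(x,y)=-(x\mediumdot_{\!\partial M}y)$ for all $x$ and all $y\in\mathbf{L}$; this is exactly the last assertion on $\mathbf{E}\times\mathbf{L}$, and it vanishes on $\mathbf{L}\times\mathbf{L}$ because $\mathbf{L}$ is Lagrangian.

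Combining the two blocks with bilinearity and the splitting $H_1(\partial M;\Q)=\mathbf{L}\oplus\mathbf{E}$ gives the closed formula $\vartheta_M^{\mathbf{E}}(x,y)=-\big(x\mediumdot_{\!\partial M}\pi_{\mathbf{L}}(y)\big)$, where $\pi_{\mathbf{L}}$ is the projection onto $\mathbf{L}$ along $\mathbf{E}$. The radicals then drop out from non-degeneracy of $\mediumdot_{\!\partial M}$ together with $\mathbf{L}^{\perp}=\mathbf{L}$: the right radical is the set of $y$ with $\pi_{\mathbf{L}}(y)=0$, namely $\mathbf{E}$; and since $\pi_{\mathbf{L}}(y)$ runs over all of $\mathbf{L}$ as $y$ runs over $H_1(\partial M;\Q)$, the left radical is the set of $x$ with $x\mediumdot_{\!\partial M}\mathbf{L}=0$, namely $\mathbf{L}^{\perp}=\mathbf{L}$.

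The main obstacle is the orientation bookkeeping in the localization step that yields the sign of $K\mediumdot_{\!M}\Sigma_0=-(x\mediumdot_{\!\partial M}y)$: one must reconcile the ``outward normal first'' convention on $\partial M$, the chosen direction of the collar, and the sign convention in \eqref{eq:lk_as_intersection}. It is worth stressing, however, that this sign is irrelevant for the computation of the radicals (which depend only on the vanishing/non-vanishing patterns of the two blocks), so the delicate bookkeeping is confined to the single clause identifying $\vartheta_M^{\mathbf{E}}$ with the opposite of $\mediumdot_{\!\partial M}$ on $\mathbf{E}\times\mathbf{L}$; the remaining geometric content — the admissibility of the chains $A$ and $A+\Sigma_0$, and the homology identity $[c_0]=y$ — is routine transversality.
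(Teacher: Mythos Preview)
Your proof is correct and follows essentially the paper's approach: both define $\vartheta_M^{\mathbf{E}}$ via Lemma~\ref{lem:P&Q} applied to two disjoint collar levels and then compute it by choosing admissible $2$-chains in~\eqref{eq:lk_as_intersection}. The only notable difference is organizational: your localization argument works for \emph{all} $x$ (not just $x\in\mathbf{E}$), so you obtain the closed formula $\vartheta_M^{\mathbf{E}}(x,y)=-\big(x\mediumdot_{\!\partial M}\pi_{\mathbf{L}}(y)\big)$ in one stroke and read off both radicals from it, whereas the paper treats $\vartheta_M^{\mathbf{E}}(\mathbf{L},\mathbf{L})=0$ separately via~\eqref{eq:sym} and checks the four radical inclusions piecewise --- the sign bookkeeping you flag is exactly the step the paper also leaves implicit.
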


\begin{proof}
Consider a collar neighborhood $ \partial M \times [-1,0]$ of $\partial M \equiv \partial M \times \{0\}$. 
By applying Lemma \ref{lem:P&Q} to $P= \partial M \times ]-1,-1/2[$ and $Q = \partial M \times ]-1/2,0[$, 
we get a bilinear map $\vartheta^{\mathbf{E}}_M:H_1(\partial M;\Q) \times H_1(\partial M;\Q) \to \Q$. 

Set ${}^*\!R := \{x\in H_1(\partial M;\Q): \vartheta_M^{\mathbf{E}}(x,-)=0 \}$, $R^* := \{y\in H_1(\partial M;\Q): \vartheta_M^{\mathbf{E}}(-,y)=0 \}$.
We show that $\mathbf{E} \subset R^* $.
Let $y\in \mathbf{E}$ and let  $L \subset \partial M \times ]-1/2,0[$  be an oriented knot representing $y$ in  $H_1(\partial M \times ]-1/2,0[;\Q) \simeq H_1(\partial M;\Q)$. 
Then, for any oriented knot $K\subset \partial M \times ]-1,-1/2[$, we have
$$
\Lk_M^{\mathbf{E}}(K,L) \cdot [m_K] \stackrel{\eqref{eq:lk^S}}{=} [L]-  \incl_* e([L])   =[L] -[L] =0 \in H_1(M\setminus K;\Q)
$$
so that $\Lk_M^{\mathbf{E}}(K,L)=0$; this implies that $\vartheta_M^{\mathbf{E}}(-,y)=0$.

We show that $\vartheta_M^{\mathbf{E}}(\mathbf{L},\mathbf{L})= 0$ which, by the previous paragraph, implies that $\mathbf{L} \subset {}^*\!R$.
Let $x,y\in \mathbf{L} \subset H_1(\partial M;\Q) \simeq H_1( \partial M \times [-1,0];\Q)$ 
and  represent them by some oriented knots $K \subset \partial M \times]-1,-1/2[, L \subset \partial M \times ]-1/2,0[$, respectively. It follows from \eqref{eq:sym} that
$
\vartheta^{\mathbf{E}}_M(x,y)=  \Lk^{\mathbf{E}}_M(K,L)= \Lk^{\mathbf{E}}_M(L,K) . 
$
Since $[K]=0 \in H_1(M\setminus(\partial M \times ]-1/2,0]) ;\Q)$, we have $[K]=0 \in H_1(M\setminus L ;\Q)$ so that $ \Lk^{\mathbf{E}}_M(L,K)=0$.

We prove that $\vartheta_M^{\mathbf{E}}$ coincides with the opposite of $\mediumdot_{\!\partial M}$ on $\mathbf{E} \times \mathbf{L}$.
Let $x,y \in  H_1(\partial M;\Q) \simeq H_1( \partial M \times [-1,0];\Q)$ such that $x \in \mathbf{E}$ and $y\in \mathbf{L}$.
We represent $x,y$ by oriented knots $K \subset \partial M \times ]-1,-1/2[$ and $L \subset \partial M \times ]-1/2,0[$,  respectively.
Choose a rational $2$-chain $D$ transversal to $K$ such that $\partial D=L$.
Then
$$
\vartheta_M^{\mathbf{E}}(x,y) = \Lk_M^{\mathbf{E}}(K,L) \stackrel{\eqref{eq:lk_as_intersection}}{=} K \mediumdot_{\!M}\, D = -x \mediumdot_{\!\partial M}\, y.
$$

We now prove that ${}^*\!R \subset \mathbf{L}$ and $R^* \subset \mathbf{E}$.
Denote by $ p_{\mathbf{E}}: H_1(\partial M;\Q) \to \mathbf{L}$ and $ q_{\mathbf{E}}: H_1(\partial M;\Q) \to \mathbf{E}$ 
the projections of the direct sum $H_1(\partial M;\Q) = \mathbf{L} \oplus \mathbf{E}$. 
Let $x\in H_1(\partial M;\Q)$ such that $\vartheta_M^{\mathbf{E}}(x,-)=0$; then, for all $l\in \mathbf{L}$, 
$$
x \mediumdot_{\!\partial M}\, l= p_{\mathbf{E}}(x) \mediumdot_{\!\partial M}\, l + q_{\mathbf{E}}(x) \mediumdot_{\!\partial M}\, l
= -\vartheta_M^{\mathbf{E}}(q_{\mathbf{E}}(x) ,l) = -\vartheta_M^{\mathbf{E}}(x,l)=0;
$$
since $\mathbf{L}$ is Lagrangian with respect to $\mediumdot_{\!\partial M}$, it follows that $x \in \mathbf{L}$. We conclude that ${}^*\!R = \mathbf{L}$.
Let $y\in H_1(\partial M;\Q)$ such that $\vartheta_M^{\mathbf{E}}(-,y)=0$; then, for all $x\in H_1(\partial M;\Q)$,
\begin{eqnarray*}
x\mediumdot_{\!\partial M}\, p_{\mathbf{E}}(y) & = & p_{\mathbf{E}}(x) \mediumdot_{\!\partial M}\, p_{\mathbf{E}}(y)  + q_{\mathbf{E}}(x) \mediumdot_{\!\partial M}\, p_{\mathbf{E}}(y)   \\
&=&   - \vartheta_M^{\mathbf{E}}( q_{\mathbf{E}}(x) ,p_{\mathbf{E}}(y)) \ = \ -\vartheta_M^{\mathbf{E}}( q_{\mathbf{E}}(x) , y) \ = \ 0;
\end{eqnarray*}
we deduce that $p_{\mathbf{E}}(y) =0$, i.e$.$ $y \in \mathbf{E}$. We conclude that  $R^* = \mathbf{E}$.
\end{proof}

The form $\vartheta_M^{\mathbf{E}}$ is the ``generalized Seifert form'' of the surface $-\partial M$ as defined in \cite[\S 2]{CT}.
It is not symmetric, but it satisfies
$$
\forall x,y \in H_1(\partial M;\Q), \quad \vartheta_M^{\mathbf{E}}(y,x) - \vartheta_M^{\mathbf{E}}(x,y) 
= e(\incl_*(y) ) \mediumdot_{\!\partial M}\, e( \incl_*(x) ) -  y \mediumdot_{\!\partial M}\, x .
$$ 
(This can be deduced from identity \eqref{eq:sym}, see also \cite[\S 2.1]{CT}.) 
This form measures how  the generalized linking number depends on the choice of the essential subspace, as the next lemma shows.

\begin{lemma}\label{lem:Lk_ell}
Let ${\mathbf{E}},{\mathbf{F}}$ be essential subspaces of $H_1(\partial M;\Q)$. For any disjoint, oriented knots $K,L\subset \operatorname{int}(M)$, we have
\begin{equation}\label{eq:variation_Lk}
\Lk_M^{\mathbf{F}}(K,L) = \Lk_M^{\mathbf{E}}(K,L) -  \vartheta_M^{\mathbf{E}}\big(f([K]),f([L])\big)
\end{equation}
where $f: H_1(M;\Q) \to H_1(\partial M;\Q)$ is the section of $\incl_*: H_1(\partial M;\Q) \to H_1(M;\Q)$ corresponding to $\mathbf{F}$.
\end{lemma}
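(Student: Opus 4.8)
The plan is to write the defining relation \eqref{eq:lk^S} for both essential subspaces and to read off the difference as a value of the pairing $\vartheta_M^{\mathbf{E}}$. Applying \eqref{eq:lk^S} to $\mathbf{E}$ and to $\mathbf{F}$ and subtracting the two identities in $H_1(M\setminus K;\Q)$ gives
\[
\big(\Lk_M^{\mathbf{E}}(K,L) - \Lk_M^{\mathbf{F}}(K,L)\big)\cdot [m_K] = \incl_*\big(f([L]) - e([L])\big).
\]
Set $w := f([L]) - e([L]) \in H_1(\partial M;\Q)$. Since $e$ and $f$ are sections of $\incl_*$, we have $\incl_*(w)=[L]-[L]=0$ in $H_1(M;\Q)$, so $w$ lies in $\Ker(\incl_*\colon H_1(\partial M;\Q)\to H_1(M;\Q))=\mathbf{L}$; as $e([L])\in\mathbf{E}$, the equality $f([L])=w+e([L])$ is moreover the decomposition of $f([L])$ along $H_1(\partial M;\Q)=\mathbf{L}\oplus\mathbf{E}$, so $w$ is the $\mathbf{L}$-component of $f([L])$.

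First I would identify the left-hand coefficient with a linking number. Choose an oriented knot $W'\subset\operatorname{int}(M)$, disjoint from $K$, whose class in $H_1(M\setminus K;\Q)$ equals $\incl_*(w)$. Its class in $H_1(M;\Q)$ is then $\incl_*(w)=0$, so $e([W'])=0$, and \eqref{eq:lk^S} applied to the pair $(K,W')$ reduces to $\incl_*(w)=[W']=\Lk_M^{\mathbf{E}}(K,W')\cdot[m_K]$. Comparing with the display above yields
\[
\Lk_M^{\mathbf{E}}(K,L)-\Lk_M^{\mathbf{F}}(K,L)=\Lk_M^{\mathbf{E}}(K,W').
\]

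The hard part is to recognize the right-hand side as $\vartheta_M^{\mathbf{E}}(f([K]),w)$, and here I would use the homological nature of linking numbers. By Lemma \ref{lem:P&Q}, applied with $Q:=\partial M\times\,]-1/2,0[$ a collar piece carrying $W'$ and $P$ the closure of $M\setminus Q$ carrying $K$, the number $\Lk_M^{\mathbf{E}}(K,W')$ depends only on $[K]\in H_1(P;\Q)\cong H_1(M;\Q)$ and on $w$. I may therefore replace $K$ by an oriented knot $K_0\subset\partial M\times\,]-1,-1/2[\,\subset P$ representing $f([K])$, which satisfies $[K_0]=\incl_*(f([K]))=[K]$ in $H_1(M;\Q)$, without changing the value. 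By the definition of $\vartheta_M^{\mathbf{E}}$ (Lemma \ref{lem:ell_boundary}, via Lemma \ref{lem:P&Q}), this gives $\Lk_M^{\mathbf{E}}(K_0,W')=\vartheta_M^{\mathbf{E}}(f([K]),w)$, hence $\Lk_M^{\mathbf{E}}(K,L)-\Lk_M^{\mathbf{F}}(K,L)=\vartheta_M^{\mathbf{E}}(f([K]),w)$.

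It remains to upgrade $w$ to $f([L])$. Since $f([L])-w=e([L])\in\mathbf{E}$ and $\mathbf{E}$ is the right radical of $\vartheta_M^{\mathbf{E}}$ by Lemma \ref{lem:ell_boundary}, we have $\vartheta_M^{\mathbf{E}}(f([K]),f([L]))=\vartheta_M^{\mathbf{E}}(f([K]),w)$. Combining this with the previous identity and rearranging gives formula \eqref{eq:variation_Lk}.
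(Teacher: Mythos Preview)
Your argument is correct in spirit and follows essentially the same path as the paper: subtract the two instances of \eqref{eq:lk^S} and identify the resulting coefficient of $[m_K]$ with a linking number computed in a collar of $\partial M$, which is then a value of $\vartheta_M^{\mathbf{E}}$. One small wrinkle: when you introduce $W'$ you specify its class in $H_1(M\setminus K;\Q)$, but Lemma~\ref{lem:P&Q} and the definition of $\vartheta_M^{\mathbf{E}}$ need its class in $H_1(Q;\Q)\cong H_1(\partial M;\Q)$, and $\incl_*\colon H_1(\partial M;\Q)\to H_1(M\setminus K;\Q)$ is generally not injective --- so you should choose $W'$ (up to an integer multiple, since $w$ is only rational) in $Q$ representing $w$ in $H_1(\partial M;\Q)$ directly, after which $[W']=\incl_*(w)$ in $H_1(M\setminus K;\Q)$ holds automatically. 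The paper's version is a touch more direct: it places a knot $\tilde L$ in the collar representing $f([L])$ itself rather than $w=f([L])-e([L])$, so the identification with $\vartheta_M^{\mathbf{E}}(f([K]),f([L]))$ is immediate and your final appeal to the right radical of $\vartheta_M^{\mathbf{E}}$ becomes unnecessary.
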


\begin{proof}
Consider a collar neighborhood $\partial M \times [-1,0]$ of $\partial M \equiv \partial M \times \{0\}$, which does not cut $K \cup L$.
Let $\tilde K \subset \partial M \times ]-1,-1/2[$ and $\tilde L \subset \partial M \times ]-1/2,0[$ be oriented knots  such that  
\begin{eqnarray*}
[\tilde K ]&=& f([K]) \in H_1(\partial M\times ]-1,-1/2[;\Q) \simeq H_1(\partial M;\Q)\\ 
\quad \hbox{and} \quad [\tilde L ]&=& f([L]) \in H_1(\partial M\times ]-1/2,0[;\Q) \simeq H_1(\partial M;\Q).
\end{eqnarray*}
We have to prove that
\begin{equation} \label{eq:3xLk}
\Lk_M^{\mathbf{E}}(\tilde K,\tilde L) = \Lk_M^{\mathbf{E}}(K,L) -  \Lk_M^{\mathbf{F}}(K,L).
\end{equation}
Since $\tilde K$ is rationally homologous to $K$ in the exterior of  $\partial M \times ]-1/2,0]$ and since $\tilde L$ is contained in  $\partial M \times ]-1/2,0]$,
$\tilde K$ is rationally homologous to $K$ in $M \setminus \tilde L$. Therefore $ \Lk_M^{\mathbf{E}}(\tilde K,\tilde L) =  \Lk_M^{\mathbf{E}}(K,\tilde L)$ and we obtain
\begin{eqnarray*}
\Lk_M^{\mathbf{E}}(\tilde K,\tilde L) \cdot [m_K] &=&  [\tilde L] -\incl_* e([\tilde L]) \\
&=& \incl_* f([L]) -\incl_* e([ L])  \ \in H_1(M\setminus K;\Q).
\end{eqnarray*}
Besides, we have
\begin{eqnarray*}
\Lk_M^{\mathbf{E}}(K,L) \cdot [m_K] - \Lk_M^{\mathbf{F}}(K,L) \cdot [m_K] &=& \big( [L] - \incl_* e([L])\big) -   \big( [L] - \incl_* f([L])\big) \\
&=& \incl_* f([L]) -\incl_* e([ L])  \ \in H_1(M\setminus K;\Q).
\end{eqnarray*}
Identity \eqref{eq:3xLk} follows.
\end{proof}

\subsection{Essential Jacobi diagrams}   \label{subsec:eJd} 

Let $M$ be a $\Q$-homology handlebody.
For any essential subspace ${\mathbf{E}}$ of $H_1(\partial M;\Q)$, we  consider the subspace 
$
\A^Y({\mathbf{E}}) \subset \A({\mathbf{E}})
$
spanned by Jacobi diagrams without strut component. If  ${\mathbf{F}} \subset H_1(\partial M;\Q)$ is another essential subspace,
we would like to identify  $\A^Y({\mathbf{E}}) $ and $\A^Y({\mathbf{F}})$ in a canonical way. 
Of course, there is the obvious isomorphism $\rho_M^{{\mathbf{F}},{\mathbf{E}}}:  \A^Y({\mathbf{F}}) \to \A^Y({\mathbf{E}})$ that consists in changing the colors of univalent vertices 
by means of the isomorphism $ef^{-1}:{\mathbf{F}} \stackrel{\simeq}{\longrightarrow} H_1(M;\Q)  \stackrel{\simeq}{\longrightarrow}  {\mathbf{E}}$.
But,  this is not enough for our purposes.

To go further, we assume  that ${\mathbf{E}}$ and ${\mathbf{F}}$ are also isotropic subspaces of $H_1(\partial M;\Q)$.
Then the restriction of the form  $\vartheta_M^{\mathbf E}$ to $\mathbf{F} \times \mathbf{F}$ is symmetric by Lemma \ref{lem:Lk_ell}.
So we can consider the linear map
$$
\kappa_M^{{\mathbf{F}},{\mathbf{E}}}: \A^Y({\mathbf{F}}) \longrightarrow \A^Y({\mathbf{E}})
$$
defined for any ${\mathbf{F}}$-colored Jacobi diagram $D$ by 
$$
\kappa_M^{{\mathbf{F}},{\mathbf{E}}}(D):= 
\rho_M^{{\mathbf{F}},{\mathbf{E}}}\left( \hbox{sum of all ways of gluing some legs of $D$ with  $\vartheta_M^{\mathbf{E}}$}\right).
$$
Observe that $\kappa_M^{{\mathbf{E}},{\mathbf{E}}}$ is the identity of $\A^Y({\mathbf{E}})$ since $\vartheta_M^{\mathbf{E}}(\mathbf{E},\mathbf{E})=0$ by Lemma \ref{lem:ell_boundary}. 
It follows from the next lemma that $\kappa_M^{{\mathbf{F}},{\mathbf{E}}}$ is an isomorphism for any ${\mathbf{E}},{\mathbf{F}}$.

\begin{lemma}
Let ${\mathbf{E}},{\mathbf{F}},{\mathbf{G}}$ be essential isotropic subspaces of $H_1(\partial M;\Q)$. 
Then  we have $\kappa_M^{{\mathbf{F}},{\mathbf{E}}} \circ \kappa_M^{{\mathbf{G}},{\mathbf{F}}} = \kappa_M^{{\mathbf{G}},{\mathbf{E}}}$.
\end{lemma}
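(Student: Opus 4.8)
The plan is to transport everything to the single vector space $H:=H_1(M;\Q)$ and to recognize each $\kappa_M^{?,?}$ as a plain ``gluing operator'' there. Write $e,f,g\colon H \to H_1(\partial M;\Q)$ for the sections of $\incl_*$ associated with $\mathbf{E},\mathbf{F},\mathbf{G}$, so that $\incl_*\circ e=\incl_*\circ f=\incl_*\circ g=\mathrm{id}_H$ and $e,f,g$ are isomorphisms onto $\mathbf{E},\mathbf{F},\mathbf{G}$. Using $\incl_*$ to identify each of $\A^Y(\mathbf{E}),\A^Y(\mathbf{F}),\A^Y(\mathbf{G})$ with $\A^Y(H)$, the color-change maps $\rho_M^{?,?}$ (built from $ef^{-1}$, $fg^{-1}$, $eg^{-1}$) all become the identity of $\A^Y(H)$, since $\incl_*\circ(ef^{-1})=\incl_*|_{\mathbf F}$ and likewise for the others. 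Under these identifications I claim $\kappa_M^{\mathbf F,\mathbf E}$ becomes the operator $\Phi_{b_{\mathbf E,\mathbf F}}$ on $\A^Y(H)$, where $\Phi_b$ denotes ``sum over all ways of gluing some pairs of legs using the symmetric form $b$'' and $b_{\mathbf E,\mathbf F}(u,v):=\vartheta_M^{\mathbf E}(f(u),f(v))$; similarly $\kappa_M^{\mathbf G,\mathbf F}\leftrightarrow\Phi_{b_{\mathbf F,\mathbf G}}$ and $\kappa_M^{\mathbf G,\mathbf E}\leftrightarrow\Phi_{b_{\mathbf E,\mathbf G}}$, with $b_{\mathbf F,\mathbf G}(u,v)=\vartheta_M^{\mathbf F}(g(u),g(v))$ and $b_{\mathbf E,\mathbf G}(u,v)=\vartheta_M^{\mathbf E}(g(u),g(v))$. (Each $b_{?,?}$ is symmetric by Lemma~\ref{lem:Lk_ell}, so $\Phi$ is well defined.) The identity to prove thus reduces to $\Phi_{b_{\mathbf E,\mathbf F}}\circ\Phi_{b_{\mathbf F,\mathbf G}}=\Phi_{b_{\mathbf E,\mathbf G}}$.

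Next I would record the additivity of gluing operators: $\Phi_{b}\circ\Phi_{b'}=\Phi_{b+b'}$ for any symmetric forms $b,b'$ on $H$. The cleanest way is to write $\Phi_b=\exp(\partial_b)$, where $\partial_b$ glues exactly one pair of legs with $b$; since gluing disjoint pairs is order-independent and $\partial_b$ is linear in $b$, the operators $\partial_b,\partial_{b'}$ commute and $\partial_b+\partial_{b'}=\partial_{b+b'}$, whence $\exp(\partial_b)\exp(\partial_{b'})=\exp(\partial_{b+b'})$. (Equivalently this is the strut-free case of the compositional bookkeeping of \cite[Lemma~4.5]{CHM}.) Granting additivity, it remains to verify the ``cocycle'' relation
\[
b_{\mathbf E,\mathbf F}+b_{\mathbf F,\mathbf G}=b_{\mathbf E,\mathbf G}.
\]

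This last identity is the conceptual heart, and it is where the variation formula \eqref{eq:variation_Lk} of Lemma~\ref{lem:Lk_ell} enters. Applying that formula to two knots pushed into disjoint collar layers $\partial M\times]-1,-1/2[$ and $\partial M\times]-1/2,0[$, and invoking the construction of $\vartheta$ in Lemma~\ref{lem:ell_boundary}, I obtain the boundary-level relation $\vartheta_M^{\mathbf F}(x,y)=\vartheta_M^{\mathbf E}(x,y)-\vartheta_M^{\mathbf E}\big(f\incl_*(x),f\incl_*(y)\big)$ for all $x,y\in H_1(\partial M;\Q)$. Specializing to $x=g(u)$, $y=g(v)$ with $u,v\in H$ and using $\incl_*\circ g=\mathrm{id}_H$, hence $f\circ\incl_*\circ g=f$, this reads $\vartheta_M^{\mathbf F}(g(u),g(v))=\vartheta_M^{\mathbf E}(g(u),g(v))-\vartheta_M^{\mathbf E}(f(u),f(v))$, which is exactly $b_{\mathbf F,\mathbf G}=b_{\mathbf E,\mathbf G}-b_{\mathbf E,\mathbf F}$. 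Combined with the additivity step this gives $\kappa_M^{\mathbf F,\mathbf E}\circ\kappa_M^{\mathbf G,\mathbf F}=\kappa_M^{\mathbf G,\mathbf E}$; taking $\mathbf G=\mathbf E$ and using $\kappa_M^{\mathbf E,\mathbf E}=\mathrm{id}$ then yields, as a byproduct, that each $\kappa_M^{\mathbf F,\mathbf E}$ is invertible. The only genuinely delicate point is making the reduction of the first paragraph faithful---checking that ``gluing on the source colors and then changing colors'' is compatible with the identifications, so that no stray copy of $ef^{-1}$ survives; once the colors are consistently tracked through $\incl_*$, the two combinatorial inputs (additivity and the cocycle relation) finish the proof.
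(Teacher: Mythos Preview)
Your proof is correct and rests on the same two ingredients as the paper's: the ``cocycle'' identity for $\vartheta_M$ coming from Lemma~\ref{lem:Lk_ell}, and the product expansion that expresses a gluing with a sum of forms as a composition of gluings. The paper states the cocycle identity as \eqref{eq:transitivity}, namely $\vartheta_M^{\mathbf E}(x,y)=\vartheta_M^{\mathbf F}(x,y)+\vartheta_M^{\mathbf E}(fg^{-1}(x),fg^{-1}(y))$ for $x,y\in\mathbf G$, which under $x=g(u)$, $y=g(v)$ is exactly your $b_{\mathbf E,\mathbf G}=b_{\mathbf F,\mathbf G}+b_{\mathbf E,\mathbf F}$. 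It then fixes a $\mathbf G$-colored diagram $D$ and a choice of $k$ pairs of legs, writes the coefficient of the resulting $\mathbf E$-colored diagram $D'$ in $\kappa_M^{\mathbf G,\mathbf E}(D)$ as $\prod_j\vartheta_M^{\mathbf E}(\col(v_j),\col(w_j))$, expands each factor via \eqref{eq:transitivity}, and recognizes the resulting sum over subsets $P\subset\{1,\dots,k\}$ as the coefficient of $D'$ in $\kappa_M^{\mathbf F,\mathbf E}\big(\kappa_M^{\mathbf G,\mathbf F}(D)\big)$.

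Your organization is a bit more structural: by transporting all colors to $H=H_1(M;\Q)$ via $\incl_*$, the maps $\rho_M^{?,?}$ become identities and each $\kappa$ becomes a pure gluing operator $\Phi_b$; the paper's coefficient expansion is then packaged as the operator identity $\Phi_b\circ\Phi_{b'}=\Phi_{b+b'}$ (your $\exp(\partial_b)$ argument). This buys a cleaner separation of the two inputs and makes invertibility of $\kappa_M^{\mathbf F,\mathbf E}$ immediate, while the paper's hands-on computation has the virtue of requiring no auxiliary identifications to track. Either way, the substance is identical.
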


\begin{proof}
We need the following identity which is a direct consequence of   Lemma \ref{lem:Lk_ell}: 
\begin{equation}\label{eq:transitivity}
\forall x,y \in {\mathbf{G}}, \qquad \vartheta_M^{\mathbf{E}}(x,y)=\vartheta_M^{\mathbf{F}}(x,y)+ \vartheta_M^{\mathbf{E}}\big(fg^{-1 }(x),fg^{-1 }(y)\big).
\end{equation}
Let $D$ be a ${\mathbf{G}}$-colored Jacobi diagram and consider $k$ pairs $\{v_1,w_1\},\dots, \{v_k,w_k\}$ of distinct univalent vertices of $D$
such that $\{v_i,w_i\}\cap \{v_j,w_j\} = \varnothing$ for any $i\neq j$. 
By making the identification $v_i\equiv w_i$ for every $i\in \{1,\dots,k\}$ and by applying the isomorphism $eg^{-1}$ to the colors of the remaining univalent vertices,
we obtain an $\mathbf{E}$-colored Jacobi diagram $D'$. 
The coefficient of $D'$ in the definition of $\kappa_M^{{\mathbf{G}},{\mathbf{E}}}(D)$ is 
\begin{eqnarray*}
&&\prod_{j=1}^k  \vartheta_M^{\mathbf{E}}\left(\col(v_j),\col(w_j)\right) \\
&\stackrel{\eqref{eq:transitivity}}{=} &\prod_{j=1}^k  \left( \vartheta_M^{\mathbf{F}}\big(\col(v_j),\col(w_j)\big)+ \vartheta_M^{\mathbf{E}}\big(fg^{-1 }\col(v_j),fg^{-1 }\col(w_j)\big)\right) \\
&=& \sum_{P\subset \{1,\dots,k\}}\  \prod_{p\in P}  \vartheta_M^{\mathbf{F}}\big(\col(v_p),\col(w_p)\big) \cdot  \prod_{q\not\in P}  \vartheta_M^{\mathbf{E}}\big(fg^{-1}\col(v_q),fg^{-1}\col(w_q)\big),
\end{eqnarray*}
which is also the coefficient of $D'$ in $\kappa_M^{{\mathbf{F}},{\mathbf{E}}}\big(\kappa_M^{{\mathbf{G}},{\mathbf{F}}}(D)\big)$.
\end{proof}

\subsection{$\Q$-LP surgery equivalence} \label{subsec:Q-LP}

We now show that the previous  constructions relative to a $\Q$-homology handlebody $M$
only depend on the $\Q$-LP surgery equivalence class of $M$. 

\begin{lemma} \label{lem:pre_Q-LP}
Let $M$  be a $\Q$-homology handlebody, let $\mathsf{C}=(C',C'')$ be  a $\Q$-LP pair such that $C' \subset \operatorname{int}(M)$
and let $M_{\mathsf{C}}$ be the result of the $\Q$-LP  surgery.  
Then there is a unique isomorphism $\psi:H_1(M;\Q) \to H_1(M_{\mathsf{C}};\Q)$  such that the following diagram is commutative:
$$
\xymatrix{
H_1(\partial M;\Q) \ar[r]^-{\incl_*} \ar@{=}[d]& H_1(M;\Q) \ar[d]^-{\psi}_-\simeq\\
H_1(\partial M_{\mathsf{C}};\Q) \ar[r]_-{\incl_*} & H_1(M_{\mathsf{C}};\Q)
}
$$
Moreover, any subspace $\mathbf{E} \subset H_1(\partial M;\Q)$ essential for $M$ is also essential for $M_{\mathsf{C}}$  and 
$
\Lk_M^{\mathbf{E}}(K,L) = \Lk_{M_{\mathsf{C}}}^{\mathbf{E}}(K,L)
$
for any disjoint, oriented knots $K,L \subset M\setminus \operatorname{int}(C')$.
\end{lemma}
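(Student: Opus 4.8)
The plan is to exploit the fact that $M$ and $M_{\mathsf{C}}$ share the common piece $X := M \setminus \operatorname{int}(C')$, so that $M = X \cup_\Sigma C'$ and $M_{\mathsf{C}} = X \cup_\Sigma C''$ with $\Sigma := \partial C' \equiv \partial C''$, and to read off everything from Mayer--Vietoris. First I would use that, since $C'$ is a $\Q$-homology handlebody, the map $\incl_*: H_1(\Sigma;\Q) \to H_1(C';\Q)$ is onto with kernel exactly $\mathbf{L}_{C'}^\Q$. Feeding this into the reduced Mayer--Vietoris sequence
$$ H_1(\Sigma;\Q) \longrightarrow H_1(X;\Q) \oplus H_1(C';\Q) \longrightarrow H_1(M;\Q) \longrightarrow \widetilde{H}_0(\Sigma;\Q)=0 $$
shows, on the one hand, that the image of $H_1(C';\Q)$ is already contained in the image of $H_1(X;\Q)$ (every class of $H_1(C';\Q)$ lifts to $\Sigma \subset X$), so that $\incl_*: H_1(X;\Q) \to H_1(M;\Q)$ is surjective; and, on the other hand, that its kernel is precisely the image of $\mathbf{L}_{C'}^\Q$ in $H_1(X;\Q)$. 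Running the identical argument for $M_{\mathsf{C}}$ gives a surjection $H_1(X;\Q) \to H_1(M_{\mathsf{C}};\Q)$ whose kernel is the image of $\mathbf{L}_{C''}^\Q$. The defining property of a $\Q$-LP pair, $\mathbf{L}_{C'}^\Q=\mathbf{L}_{C''}^\Q$, forces the two kernels to coincide, and $\psi$ is the induced isomorphism between the two common quotients.

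Next I would verify the three assertions. Since $\partial M = \partial M_{\mathsf{C}} \subset X$, both boundary maps $\incl_*$ factor through the single map $H_1(\partial M;\Q) \to H_1(X;\Q)$; commutativity of the square is then immediate from the construction of $\psi$, and uniqueness follows because $\incl_*: H_1(\partial M;\Q) \to H_1(M;\Q)$ is onto (a $\Q$-homology handlebody satisfies this, as in the proof of Lemma~\ref{lem:QLCob_to_QHH}), so $\psi$ is pinned down on all of $H_1(M;\Q)$. The relation $\incl_*^{M_{\mathsf{C}}} = \psi \circ \incl_*^{M}$ read off the square shows these maps have the same kernel, whence $\mathbf{L}_{M_{\mathsf{C}}}^\Q = \mathbf{L}_{M}^\Q$; since $\mathbf{E}$ is a complement of this common Lagrangian in the common space $H_1(\partial M;\Q)=H_1(\partial M_{\mathsf{C}};\Q)$, it is essential for $M_{\mathsf{C}}$ as well, and the corresponding sections satisfy $e' = e \circ \psi^{-1}$.

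For the equality of linking numbers, the key point is that $K,L \subset X$, so removing $K$ leaves the splitting intact: $M \setminus K = (X\setminus K)\cup_\Sigma C'$ and $M_{\mathsf{C}}\setminus K = (X\setminus K)\cup_\Sigma C''$. I would rerun the same Mayer--Vietoris computation on these complements to obtain a canonical isomorphism $\psi_K: H_1(M\setminus K;\Q) \to H_1(M_{\mathsf{C}}\setminus K;\Q)$ compatible with the maps out of $H_1(X\setminus K;\Q)$. The three classes entering the defining equation \eqref{eq:lk^S} --- namely $[L]$, the meridian class $[m_K]$, and $\incl_* e([L])$ --- all originate in the common piece $X \setminus K$ (the last because $e([L]) \in \mathbf{E} \subset H_1(\partial M;\Q)$ and $\partial M \subset X$), so $\psi_K$ transports equation \eqref{eq:lk^S} for $M$ to the corresponding equation for $M_{\mathsf{C}}$, using $e'=e\circ\psi^{-1}$; comparing with the defining equation in $M_{\mathsf{C}}$ and invoking the short exact sequence that makes the coefficient of $[m_K]$ unambiguous yields $\Lk_M^{\mathbf{E}}(K,L)=\Lk_{M_{\mathsf{C}}}^{\mathbf{E}}(K,L)$. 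The main obstacle, and the step deserving the most care, is the Mayer--Vietoris bookkeeping of the first two paragraphs: one must check that both the surjectivity and the identification of the kernel use \emph{only} that $C'$ (resp.\ $C''$) is a $\Q$-homology handlebody with the prescribed Lagrangian, and then track that every homology class involved genuinely lives in $X$ (or $X\setminus K$) so that $\psi$ and $\psi_K$ relate them as claimed.
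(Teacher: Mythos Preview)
Your argument is correct. The construction of $\psi$ via Mayer--Vietoris matches the paper's, though you spell out the kernel identification more explicitly than the paper does. Where you and the paper genuinely diverge is in the proof that linking numbers are preserved. The paper uses the intersection formula \eqref{eq:lk_as_intersection}: it chooses a rational $2$-chain $D$ in $M$ with $\partial D = L + \tilde L$ (where $[\tilde L]\in\mathbf{E}$), decomposes $D=D_0+D'$ with $D_0\subset X$ and $D'\subset C'$, and then uses $\mathbf{L}_{C'}^\Q=\mathbf{L}_{C''}^\Q$ to replace $D'$ by a chain $D''\subset C''$ with the same boundary, obtaining a new chain $D_{\mathsf{C}}=D_0+D''$ in $M_{\mathsf{C}}$ with the same intersection with $K$. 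You instead stay with the defining equation \eqref{eq:lk^S} and rerun Mayer--Vietoris on the knot complements to produce an isomorphism $\psi_K$ that transports the equation from $H_1(M\setminus K;\Q)$ to $H_1(M_{\mathsf{C}}\setminus K;\Q)$.

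Both arguments use the Lagrangian-preserving hypothesis at exactly one place, and both are short. The paper's chain-level argument is more geometric and requires a moment of care with transversality to $\partial C'$; your homological transport avoids $2$-chains entirely but repeats the Mayer--Vietoris bookkeeping. Your approach has the mild advantage that it makes the compatibility of the sections $e'=e\circ\psi^{-1}$ explicit, which is conceptually clean; the paper's has the advantage of directly exhibiting why the intersection number is unchanged.
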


\begin{proof}
The unicity of $\psi$ follows from the surjectivity of $\incl_*:H_1(\partial M;\Q)\to H_1(M;\Q)$. 
The existence is an application of the Mayer--Vietoris theorem showing  that there is a unique isomorphism $\psi:H_1(M;\Q) \to H_1(M_{\mathsf{C}};\Q)$ 
making the following diagram commutative:
$$
\xymatrix @!0 @R=1cm @C=3cm {
& H_1(M;\Q) \ar@{-->}[dd]_-\simeq^-\psi \\
H_1\big(M\setminus \operatorname{int}(C');\Q\big)  \ar@{->>}[ru]^-{\incl_*}  \ar@{->>}[rd]_-{\incl_*} & \\
& H_1(M_{\mathsf{C}};\Q)
}
$$
Assume that $\mathbf{E} \subset H_1(\partial M;\Q)$ is essential for $M$. Since $\psi$ is an isomorphism, 
$\incl_* \vert_{\mathbf{E}}= \psi \circ \incl_*  \vert_{\mathbf{E}}: \mathbf{E} \to H_1(M_{\mathsf{C}};\Q)$ is an isomorphism so that $\mathbf{E}$ is essential for $M_{\mathsf{C}}$ as well.
Let $K,L \subset M\setminus \operatorname{int}(C')$ be any two disjoint oriented knots.
Let $D$ be a rational $2$-chain in $M$ transversal  to $K$ such that $\tilde L :=\partial D -L$ is a $1$-cycle in $\partial M$ and  $\big[\tilde L \big] \in H_1(\partial M;\Q)$ belongs to $\mathbf{E}$.
We can assume that $D=D_0+ D'$ where $D_0$ is a rational $2$-chain in $M\setminus \operatorname{int}(C')$, $D'$ is a rational $2$-chain in $C'$ and $\partial D'$ is a rational $1$-cycle in $\partial C'$.  
Since $\mathbf{L}_{C'}^\Q=\mathbf{L}_{C''}^\Q$, there is a rational $2$-chain $D''$ in $C''$ such that $\partial D'' = \partial D'$. 
Then $D_{\mathsf{C}}:=D_0+ D''$ is a rational $2$-chain in $M_{\mathsf{C}}$ with boundary $\tilde L + L$ and which is transversal to $K$.
We conclude thanks to  \eqref{eq:lk_as_intersection}:
$$
\Lk_M^{\mathbf{E}}(K,L) = K \mediumdot_{\! M}\, D =  K \mediumdot_{\!M_{\mathsf{C}}}\, D_{\mathsf{C}} =  \Lk_{M_{\mathsf{C}}}^{\mathbf{E}}(K,L)
$$

\up
\end{proof}

We now fix a closed connected oriented surface $F$.
Given a $\Q$-homology handlebody $M$ with boundary parameterization $m:F \to M$,
a subspace $\mathbf{E}$ of $H_1(F;\Q)$ is said to be \emph{$M$-essential} if $m_*(\mathbf{E})$ is essential in the sense of \S \ref{subsec:def_lk},
i.e$.$ the restriction to $\mathbf{E}$  of $m_*:H_1(F;\Q) \to H_1(M;\Q)$ is an isomorphism onto $H_1(M;\Q)$.

\begin{lemma} \label{lem:Q-LP}
Let $(M,m)$ and $(\bar M, \bar m)$ be $\Q$-homology handlebodies whose boundaries are parameterized  by $F$. Then the following statements are equivalent:
\begin{enumerate}
\item[(i)] $(\bar M, \bar m)$ can be obtained from $(M,m)$ by  a single $\Q$-LP surgery;
\item[(i')] $(\bar M, \bar m)$ can be obtained from $(M,m)$ by  a sequence of $\Q$-LP surgeries;
\item[(ii)] we have $m_*^{-1} \big(\mathbf{L}_M^\Q \big) = \bar m_*^{-1} \big( \mathbf{L}_{\bar M}^\Q \big)$ in $H_1(F;\Q)$;
\item[(iii)] there is  a subspace $\mathbf{E} \subset H_1(F;\Q)$ which is  $M$-essential and $\bar M$-essential, such that
 $\vartheta_M^{m_*(\mathbf{E})} \circ (m_*\times m_*)  = \vartheta_{\bar M}^{\bar m_*(\mathbf{E})} \circ (\bar m_*\times \bar m_*)$.
\end{enumerate}
\end{lemma}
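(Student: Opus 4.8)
The plan is to establish the cycle (i) $\Rightarrow$ (i') $\Rightarrow$ (ii) $\Rightarrow$ (i) together with the separate equivalence (ii) $\Leftrightarrow$ (iii). The implication (i) $\Rightarrow$ (i') is immediate. For (i') $\Rightarrow$ (ii) I would use that a $\Q$-LP surgery is performed in the \emph{interior}, so it leaves the boundary and its parameterization untouched; hence repeated application of Lemma \ref{lem:pre_Q-LP}, whose commutative square shows that the canonical isomorphism $\psi$ carries $\mathbf{L}_M^\Q$ to $\mathbf{L}_{M_{\mathsf{C}}}^\Q$, yields that the pulled-back Lagrangian $m_*^{-1}(\mathbf{L}^\Q)$ in $H_1(F;\Q)$ is constant along the whole sequence. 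Composing with the parameterization-preserving homeomorphism onto $(\bar M,\bar m)$ then gives $m_*^{-1}(\mathbf{L}_M^\Q)=\bar m_*^{-1}(\mathbf{L}_{\bar M}^\Q)$, which is (ii).

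For (ii) $\Leftrightarrow$ (iii) the key input is Lemma \ref{lem:ell_boundary}. Set $\Theta_M:=\vartheta_M^{m_*(\mathbf{E})}\circ(m_*\times m_*)$ and $\Theta_{\bar M}:=\vartheta_{\bar M}^{\bar m_*(\mathbf{E})}\circ(\bar m_*\times\bar m_*)$, two forms on $H_1(F;\Q)$; by that lemma their left radicals are $m_*^{-1}(\mathbf{L}_M^\Q)$ and $\bar m_*^{-1}(\mathbf{L}_{\bar M}^\Q)$, and each has right radical $\mathbf{E}$. Thus (iii) $\Rightarrow$ (ii) is just reading off the (now equal) left radicals. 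For (ii) $\Rightarrow$ (iii), writing $\mathbf{L}_F$ for the common Lagrangian, I would pick any complement $\mathbf{E}$ (automatically $M$- and $\bar M$-essential) and compare the two forms block-by-block in $H_1(F;\Q)=\mathbf{L}_F\oplus\mathbf{E}$: both vanish when the first argument lies in $\mathbf{L}_F$ (left radical) or the second in $\mathbf{E}$ (right radical), so each is determined by its restriction to $\mathbf{E}\times\mathbf{L}_F$, where both equal the opposite of the intersection form (Lemma \ref{lem:ell_boundary}, last clause), which $m_*$ and $\bar m_*$ preserve since they are orientation-preserving onto the respective boundaries. Hence $\Theta_M=\Theta_{\bar M}$.

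The hard part will be (ii) $\Rightarrow$ (i), a genuine topological construction realizing the replacement by a \emph{single} surgery. I would fix a collar $\partial M\times[0,1]\subset M$ with $\partial M\times\{0\}=\partial M$, and take the $\Q$-LP pair $(C',C'')$ with $C':=M\setminus(\partial M\times[0,1))$ a pushed-off copy of $M$ and $C'':=\bar M$; both are $\Q$-homology handlebodies of the common genus $g$, and $M\setminus\operatorname{int}(C')$ is exactly the collar. I would glue $\partial C''=\partial\bar M$ to the inner boundary $\partial C'=\partial M\times\{1\}$ via $\bar m\circ m^{-1}$, so that the induced parameterizations $c'$ (obtained from $m$ through the collar) and $c''=\bar m$ correspond. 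Absorbing the external collar produces a homeomorphism $M_{\mathsf{C}}\to\bar M$ carrying the parameterization $m$ of $\partial M_{\mathsf{C}}=\partial M$ to $\bar m$, giving $(M_{\mathsf{C}},m)\cong(\bar M,\bar m)$ as parameterized manifolds.

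It then remains to check that $(C',C'')$ is a legitimate $\Q$-LP pair, i.e. $\mathbf{L}_{C'}^\Q=\mathbf{L}_{C''}^\Q$ under this identification. Since $C'\hookrightarrow M$ is a homotopy equivalence and $\partial C'$ is isotopic to $\partial M$ across the collar, one has $(c')_*^{-1}(\mathbf{L}_{C'}^\Q)=m_*^{-1}(\mathbf{L}_M^\Q)$, and likewise $(c'')_*^{-1}(\mathbf{L}_{C''}^\Q)=\bar m_*^{-1}(\mathbf{L}_{\bar M}^\Q)$; as the gluing identifies $\partial C'\equiv\partial C''$ through $c''\circ(c')^{-1}$, the two Lagrangians coincide exactly when $m_*^{-1}(\mathbf{L}_M^\Q)=\bar m_*^{-1}(\mathbf{L}_{\bar M}^\Q)$, which is hypothesis (ii). I expect the only delicate bookkeeping to lie in the orientation conventions for the collar and in confirming that the collar-absorbing homeomorphism does transport $m$ to $\bar m$; everything else is formal once Lemmas \ref{lem:pre_Q-LP} and \ref{lem:ell_boundary} are in hand.
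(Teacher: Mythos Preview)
Your proof is correct and follows essentially the same strategy as the paper: both use Lemma~\ref{lem:pre_Q-LP} for (i')$\Rightarrow$(ii) and Lemma~\ref{lem:ell_boundary} (left-radical identification) for (iii)$\Rightarrow$(ii). Two small differences are worth noting. First, for (ii)$\Rightarrow$(i) the paper simply writes ``Clearly, (ii) implies (i)'' without further comment; your collar construction is exactly the argument one has in mind there, and it is carried out correctly. Second, for the remaining direction involving (iii), the paper takes the route (i)$\Rightarrow$(iii) via the linking-number preservation in Lemma~\ref{lem:pre_Q-LP}, whereas you prove (ii)$\Rightarrow$(iii) directly by observing that the block structure of $\vartheta_M^{m_*(\mathbf{E})}$ given in Lemma~\ref{lem:ell_boundary} determines the form completely once $\mathbf{L}_F$ and $\mathbf{E}$ are fixed. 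Your direct argument is in fact the content of the paragraph immediately following the lemma in the paper (the explicit formula $\vartheta_{\mathcal{M}}^{\mathbf{E}}(l_1+e_1,l_2+e_2)=l_2\mediumdot_{\!F} e_1$), so the two approaches amount to the same thing.
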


\begin{proof}
Clearly  (i) implies (i'). Assume (i'): there is a sequence of $\Q$-LP surgeries 
$$
(M,m)=(M_1,m_1) \leadsto (M_2,m_2)  \leadsto \cdots \leadsto (M_{r+1},m_{r+1}) =(\bar M, \bar m).
$$ 
For all $i\in \{1,\dots,r\}$, the isomorphism $\psi_i:H_1(M_i;\Q) \to H_1(M_{i+1};\Q)$ given by Lemma \ref{lem:pre_Q-LP} satisfies $ \psi_i \circ m_{i,*}  = m_{i+1,*}:H_1(F;\Q) \to H_1(M_{i+1};\Q)$.
Hence the composite isomorphism $\psi := \psi_r \circ \cdots \circ \psi_{1}$ satisfies   $\psi \circ m_{*}= \bar m_{*}$.
For any $x\in m_*^{-1} (\mathbf{L}_M^\Q)$, we have $$\incl_* \bar m_*(x) =\bar m_*(x)=  \psi m_*(x)  =   \psi\, \incl_* m_*(x) =0$$
so that $x\in \bar m_*^{-1} (\mathbf{L}_{\bar M}^\Q)$. This shows that  $ m_*^{-1} (\mathbf{L}_M^\Q) \subset \bar m_*^{-1} (\mathbf{L}_{\bar M}^\Q)$,
and the opposite inclusion is proved similarly: hence we get (ii).
Clearly, (ii)  implies (i) and it now remains to prove the equivalence between (iii) and (i),(i'),(ii).

That (i) implies (iii) follows from the second statement of Lemma \ref{lem:pre_Q-LP}.
Assume  (iii) and denote by $h:\partial M \to \partial \bar M$ the orientation-preserving homeomorphism defined by $h(m(x))=\bar m(x) $ for any $x\in F$.
Then we have  $\vartheta_M^{m_*(\mathbf{E})}  = \vartheta_{\bar M}^{\bar m_*(\mathbf{E})} \circ (h_*\times h_*)$ so that 
$h_*$ sends the left radical of  $\vartheta_M^{m_*(\mathbf{E})}$ to the left radical of $\vartheta_{\bar M}^{\bar m_*(\mathbf{E})}$. 
We deduce assertion (ii) using the second statement of Lemma \ref{lem:ell_boundary}.
\end{proof}

Fix a $\Q$-LP surgery equivalence class $\mathcal{M}$ of $\Q$-homology handlebodies with boundary parameterized by $F$. 
By the equivalence (i)$\Leftrightarrow$(ii) in Lemma \ref{lem:Q-LP},
the choice of $\mathcal{M}$ is equivalent to the choice of a Lagrangian subspace $\mathbf{L}$ of $H_1(F;\Q)$, satisfying $m_*(\mathbf{L}) = \mathbf{L}_{M}^\Q$ for any $M\in \mathcal{M}$.
A subspace $\mathbf{E}$ of  $H_1(F;\Q)$ is said to be \emph{$\mathcal{M}$-essential} if $H_1(F;\Q) = \mathbf{L} \oplus \mathbf{E}$ and,
in this case, it induces a  bilinear form
$$
\vartheta_{\mathcal{M}}^{\mathbf{E}}: H_1(F;\Q) \times H_1(F;\Q) \longrightarrow \Q
$$
defined by $\vartheta_{\mathcal{M}}^{\mathbf{E}}(l_1+e_1,l_2+e_2):=  l_2 \mediumdot_{\!F}\, e_1$ for any $l_1,l_2 \in \mathbf{L}$ and $e_1,e_2 \in \mathbf{E}$.
Observe that $\mathbf{E}$ is $\mathcal{M}$-essential if and only if it is $M$-essential for a particular $M\in \mathcal{M}$
and, by Lemma  \ref{lem:ell_boundary},   $\vartheta_{\mathcal{M}}^{\mathbf{E}}$ then corresponds  to  $\vartheta_{M}^{m_*(\mathbf{E})}$ 
through the isomorphism $m_*:H_1(F;\Q) \to H_1(\partial M;\Q)$.

Furthermore, for any two isotropic $\mathcal{M}$-essential subspaces  $\mathbf{E}, \mathbf{F} \subset H_1(F;\Q)$, there is a linear map
$$
\kappa_{\mathcal{M}}^{\mathbf{F}, \mathbf{E}}: \A^Y(\mathbf{F}) \longrightarrow \A^Y(\mathbf{E})
$$
defined for any ${\mathbf{F}}$-colored Jacobi diagram $D$ by 
$$
\kappa_{\mathcal{M}}^{{\mathbf{F}},{\mathbf{E}}}(D) := 
\rho_{\mathcal{M}}^{{\mathbf{F}},{\mathbf{E}}}\left( \hbox{sum of all ways of gluing {some} legs of $D$ with  $\vartheta_{\mathcal{M}}^{\mathbf{E}}$}\right)
$$
where $\rho_{\mathcal{M}}^{{\mathbf{F}},{\mathbf{E}}}: \A^Y(\mathbf{F}) \to \A^Y(\mathbf{E})$ is the isomorphism induced by the change of colors 
$$
\xymatrix{
\mathbf{F} \ar[rr]^-{\hbox{\scriptsize inclusion}} &&\mathbf{L} \oplus \mathbf{F} = H_1(F;\Q) = \mathbf{L} \oplus \mathbf{E} \ar[rr]^-{\hbox{\scriptsize projection}} && \mathbf{E}.
}
$$
Observe that the map $\kappa_{\mathcal{M}}^{{\mathbf{F}},{\mathbf{E}}}$ corresponds to the isomorphism $\kappa_{M}^{m_*(\mathbf{F}),m_*(\mathbf{E})}$
for any $M\in \mathcal{M}$ through the isomorphisms $\A^Y(\mathbf{E}) \simeq \A^Y(m_*(\mathbf{E}))$ and $\A^Y(\mathbf{F}) \simeq \A^Y(m_*(\mathbf{F}))$
induced by the changes of colors $m_*\vert_{\mathbf{E}}: \mathbf{E}  \to m_*(\mathbf{E}) \subset  H_1(\partial M;\Q)$ and 
$m_*\vert_{\mathbf{F}}: \mathbf{F}  \to m_*(\mathbf{F}) \subset  H_1(\partial M;\Q)$, respectively.

\section{Milnor's triple linking numbers in $\Q$-homology $3$-spheres}   \label{sec:Milnor}

We extend the definition of Milnor's triple linking numbers in $S^3$ to any $\Q$-homology $3$-sphere
and we relate them to the Kontsevich--LMO invariant. This relation is needed in the proof of the splitting formulas. 

\subsection{Definition} \label{subsec:def_mu123}

Let $L$ be an algebraically-split oriented link in a $\Q$-homology $3$-sphere $S$,
whose connected components are numbered from $1$ to $\ell$:
$$
\forall i\neq j \in \{1,\dots,\ell\}, \ \Lk_S(L_i,L_j) = 0.
$$
Then, for each triplet $(i,j,k)$ of distinct integers in $\{1,\dots,\ell\}$,
there is an isotopy invariant of the link $L$ in $S$
$$
\bar\mu_{ijk}(L) \in \Q
$$
which, in the case of $S=S^3$, is the usual Milnor's isotopy invariant of length $3$.
In general, $\bar\mu_{ijk}(L)$ is not an integer but a rational number.

The most efficient, although indirect, way to define $\bar\mu_{ijk}(L)$ is as follows.
Let $S_L$ be the closed oriented $3$-manifold obtained from $S$ by longitudinal surgery along $L$.
Thus, $S_L$ is obtained from the exterior $S\setminus \operatorname{int}(N(L))$ of the link $L$
by gluing $\ell$ solid tori, the meridian of the $i$-th solid torus being glued to the longitude\footnote{ 
The \emph{longitude} of an oriented knot $K$ in a $\Q$-homology $3$-sphere $S$
is the unique oriented simple closed curve on  $\partial N(K)$ that is rationally null-homologous in 
$S\setminus \operatorname{int}(N(K))$ and that is homotopic in $N(K)$ to $b\cdot K$ for some integer $b>0$.
This is not necessarily a parallel of $K$, i.e we may have $b>1$.}
$\lambda_i$ of the $i$-th component of $L$.
The vector space $H_1(S_L;\Q)$ has dimension $\ell$, 
with preferred basis given by the images of the oriented meridians $m_1,\dots,m_\ell$ of $L_1,\dots, L_\ell$ under 
$\incl_*: H_1(S\setminus \operatorname{int}(N(L));\Q) \to H_1(S_L;\Q)$.
Let also $(m^*_1,\dots, m_\ell^*)$ be the dual basis of $H^1(S_L;\Q)$, i.e$.$
$\langle m^*_i, m_j \rangle = \delta_{i,j}$ for all $i,j=1,\dots, \ell$. Then,
\begin{equation}\label{eq:mu}
\bar \mu_{ijk}(L) := \big\langle m_i^* \cup m_j^* \cup m_k^* , [S_L] \big\rangle \ \in \Q
\end{equation}
defines an isotopy invariant of $L \subset S$.

The invariant $\bar\mu_{ijk}(L)$ can be computed as follows.
Since the link $L$ is assumed to be algebraically-split, each component $L_i$ of $L$
is rationally null-homologous in the exterior of the other components.
Thus, we can find some compact connected oriented surfaces 
$\Sigma_1, \dots, \Sigma_\ell \subset S$ satisfying the following:
\begin{itemize}
\item for all $i\in \{1,\dots,\ell \}$, there exists an integer $n_i>0$ such that 
$\partial \Sigma_i$ winds  $n_i$ times around $L_i$;
\item for all $i\neq j \in \{1,\dots,\ell \}$, $\Sigma_i$ is in transverse position with $\Sigma_j$ 
and $\Sigma_i \cap L_j = \varnothing$.
\end{itemize}

\begin{lemma}\label{lem:triple_Milnor}
With the above notation, we have
\begin{equation}
\label{eq:Milnor}
\bar \mu_{ijk}(L) = - \frac{\Sigma_i \mediumdot \Sigma_j \mediumdot \Sigma_k}{n_i n_j  n_k} \in \Q.
\end{equation}
\end{lemma}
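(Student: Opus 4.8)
The plan is to compute the triple cup product \eqref{eq:mu} by means of Poincar\'e duality, converting it into a triple intersection number of three $2$-cycles in $S_L$ that are built out of the surfaces $\Sigma_1,\dots,\Sigma_\ell$. The guiding identity is that, for a closed oriented $3$-manifold, the cup product of three $1$-cocycles evaluated on the fundamental class equals the triple intersection of their Poincar\'e-dual $2$-cycles (up to a fixed sign coming from orientation conventions).

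First I would analyze the boundary of each $\Sigma_i$. Since $S$ is a $\Q$-homology $3$-sphere and $L$ has $\ell$ components, the exterior $E:=S\setminus \operatorname{int} N(L)$ satisfies $H_1(E;\Q)\cong \Q^\ell$ with basis the meridians $[m_1],\dots,[m_\ell]$, and each longitude $\lambda_i$ is null-homologous in $E$. As $\partial\Sigma_i$ bounds $\Sigma_i\subset E$, its class in $H_1(\partial N(L_i);\Q)$ carries no meridional component, whence $[\partial\Sigma_i]=n_i[\lambda_i]$. In $S_L$ the curve $\lambda_i$ bounds the meridian disk $D_i$ of the $i$-th surgery solid torus $V_i$, so $\hat\Sigma_i := \Sigma_i \cup n_i D_i$ (with orientations chosen so that $\partial\hat\Sigma_i=0$) is a closed rational $2$-cycle in $S_L$; I would then note that these cycles span $H_2(S_L;\Q)$ once the intersection matrix below is shown to be nondegenerate.

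Next I would identify the Poincar\'e duals by a direct intersection count: $\hat\Sigma_i \mediumdot m_j = \pm n_i\,\delta_{ij}$. For $j\neq i$ a small meridian $m_j$ near $L_j$ is disjoint from both $\Sigma_i$ (which avoids $L_j$) and $D_i\subset V_i$; for $j=i$ one pushes $m_i$ into $V_i$, so $\Sigma_i$ contributes $0$ and only the capping term survives, with $D_i \mediumdot m_i = \lambda_i \mediumdot m_i = \pm 1$. Hence $\operatorname{PD}(m_i^*) = \pm \tfrac{1}{n_i}[\hat\Sigma_i]$, and \eqref{eq:mu} becomes the triple intersection number $\pm\tfrac{1}{n_i n_j n_k}\,\hat\Sigma_i \mediumdot \hat\Sigma_j \mediumdot \hat\Sigma_k$. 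I would then show the capping disks do not affect this triple intersection: expanding over the eight terms of type $(\Sigma_\bullet \text{ or } n_\bullet D_\bullet)$, every term containing at least one disk vanishes because $D_i,D_j,D_k$ lie in the pairwise-disjoint tori $V_i,V_j,V_k$ while the $\Sigma$'s lie in $E$; only $\Sigma_i\mediumdot\Sigma_j\mediumdot\Sigma_k$ remains. This gives $\bar\mu_{ijk}(L)=\pm\frac{\Sigma_i\mediumdot\Sigma_j\mediumdot\Sigma_k}{n_in_jn_k}$.

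The hard part will be pinning down the global sign and showing it is $-1$. I expect all sign ambiguity to be concentrated in three places: the convention relating $\operatorname{PD}$ to the intersection pairing on $S_L$, the sign of $\lambda_i \mediumdot m_i$ once $\partial N(L_i)$ is oriented by the ``outward normal first'' rule of the paper's conventions, and the sign in $\langle a\cup b\cup c,[S_L]\rangle = \operatorname{PD}(a)\mediumdot \operatorname{PD}(b)\mediumdot \operatorname{PD}(c)$. Tracking these consistently, together with the longitude normalization of the footnote (so that $n_i>0$ and the $D_i$ are genuine meridian disks), should yield the claimed minus sign; this orientation bookkeeping is the only genuinely delicate step.
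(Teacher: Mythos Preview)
Your approach is essentially identical to the paper's: cap the $\Sigma_i$ to closed $2$-cycles in $S_L$, show $\tfrac{1}{n_i}[\hat\Sigma_i]$ is Poincar\'e dual to $m_i^*$ via the intersection with $m_j$, and invoke the cup/intersection correspondence (the paper does not track the sign by hand but simply cites Bredon's conventions for the minus).

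One correction is worth flagging: your claims $[\partial\Sigma_i]=n_i[\lambda_i]$ and $D_i\mediumdot m_i=\lambda_i\mediumdot m_i=\pm1$ tacitly assume the longitude is a parallel of $L_i$, but the footnote preceding the lemma warns that $\lambda_i$ may wind $b_i>1$ times around $L_i$ in a $\Q$-homology sphere. The correct statements are that (after isotoping $\Sigma_i$ transverse to $\partial N(L_i)$ and discarding null-homotopic boundary circles) one has $[\partial(\Sigma_i\cap E)]=r_i[\lambda_i]$ for some integer $r_i$ with $r_ib_i=n_i$, and $\lambda_i\mediumdot m_i=\pm b_i$ on the torus; the two factors of $b_i$ cancel and you still get $\hat\Sigma_i\mediumdot m_i=\pm n_i$, so the argument survives. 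The paper's proof handles exactly this point by writing $\lambda_i=a_i m_i+b_i p_i$ and computing $m_i\mediumdot(\Sigma_i)_L=r_ib_i=n_i$ directly.
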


\noindent
In this form, the invariant $\bar \mu_{ijk}(L)$ appears in \cite{Lescop_KKT_surgery}.
Here, the intersection number $\Sigma_i \mediumdot \Sigma_j \mediumdot \Sigma_k$ in $S$ is computed
with the sign convention of \cite{Bredon} (which agrees with that of \cite{Lescop_KKT_surgery}).

%
%

\begin{proof}[Proof of Lemma \ref{lem:triple_Milnor}]
We can assume after an isotopy that, for each $i\in \{1,\dots,\ell \}$,
the surface $\Sigma_i$ does not cut $N(L_j)$ for $j\neq i$
and is in tranverse position with  $\partial N(L_i)$.
Thus, $\Sigma_i \cap \partial  N(L_i)$ consists of null-homotopic simple closed curves in $\partial N(L_i)$
and a family of simple closed curves parallel to the same curve. 
This curve being then rationally null-homologous in the complement of $N(L_i)$,
it is necessarily the longitude $\lambda_i$ of $L_i$: let $r_i\in \Z$ be the number of times
(counted with signs) $\Sigma_i \cap (S\setminus \operatorname{int}(N(L)))$ winds around $\lambda_i$.
Because the surgery $S \leadsto S_L$ is longitudinal, we can transform 
$\Sigma_i \cap (S\setminus \operatorname{int}(N(L)))$ to a closed oriented surface $(\Sigma_i)_L\subset S_L$ 
by gluing some disks  inside the $i$-th surgery solid torus.
Let $p_i\subset \partial N(L_i)$ be an oriented parallel of $L_i$,
and let $a_i,b_i$ be the integers satisfying $\lambda_i =a_i \cdot m_i + b_i \cdot p_i \in H_1(\partial N(L_i);\Z)$.
We have $(\Sigma_i)_L \cap m_j=\varnothing $ for all $j\neq i$ and 
$$
m_i \mediumdot (\Sigma_i)_L = m_i\mediumdot (r_i \lambda_i) = r_i b_i = n_i.
$$
Therefore,  $[\frac{1}{n_i}(\Sigma_i)_L]\in H_2(S_L;\Q)$ is Poincar\'e dual to $m_i^* \in H^1(S_L;\Q)$.
We conclude thanks to the Poincar\'e correspondence between homological intersections and cup products 
(using the conventions of \cite{Bredon}):
\begin{eqnarray*}
\bar\mu_{ijk}(L) & = & \big\langle m_i^* \cup m_j^* \cup m_k^* , [S_L] \big\rangle  \\
&=&  -\left[\frac{1}{n_i}(\Sigma_i)_L\right]\mediumdot\left[\frac{1}{n_j}(\Sigma_j)_L\right]
 \mediumdot \left[\frac{1}{n_k}(\Sigma_k)_L\right] 
 \ = \ - \frac{\Sigma_i \mediumdot \Sigma_j \mediumdot \Sigma_k}{n_i n_j  n_k}.
\end{eqnarray*}

\up
\end{proof}

\subsection{The ``Y'' part of the Kontsevich--LMO invariant}

The relationship between $\bar\mu$-invariants of links in $S^3$ and the Kontsevich integral
has been studied by Habegger \& Masbaum who considered   $\mu$-invariants of string-links in the standard cube \cite{HabeggerMasbaum}.
(See also \cite{Moffatt} for the case of string-links in $\Z$-homology  cubes.)
The following deals with the length three $\bar \mu$-invariants 
of algebraically-split links in $\Q$-homology $3$-spheres, as defined in \S \ref{subsec:def_mu123}.
 
\begin{lemma}
\label{lem:triple_Milnor_Y}
Let $(B,\gamma)$ be an $\ell$-component top tangle in a $\Q$-homology cube,
and assume that the plat closure $\hat \gamma$ of $\gamma$ 
in the $\Q$-homology $3$-sphere $\hat B$ is algebraically-split.
Then, the Kontsevich--LMO invariant of $(B,\gamma)$ 
determines the framing numbers\footnote{
The \emph{framing number} $\operatorname{Fr}(K)\in \Q$ of a framed knot $K$ in a $\Q$-homology $3$-sphere $S$ is
the linking number in $S$ of $K$ with the parallel of $K$ defined by the framing.} 
of $\hat \gamma$ and its Milnor's triple linking numbers as follows:
$$
\log_\sqcup \chi^{-1} Z(B,\gamma) = \sum_{i=1}^\ell \frac{\operatorname{Fr}(\hat \gamma_i)}{2} \strutgraphtop{i^+}{i^+}
 -\sum_{\substack 1 \leq i<j<k \leq \ell} \bar \mu_{ijk}(\hat{\gamma}) \cdot
\Ygraphtop{k^+}{j^+}{i^+} + (\ideg >1). 
$$
\end{lemma}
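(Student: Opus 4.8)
The plan is to read off from $\log_\sqcup \chi^{-1}Z(B,\gamma)$ only its part of i-degree $\le 1$, all higher terms being absorbed into the error $(\ideg>1)$. Since the Kontsevich--LMO invariant is group-like, $\log_\sqcup \chi^{-1}Z(B,\gamma)$ is a series of \emph{connected} Jacobi diagrams; the only connected ones of i-degree $0$ are single struts, and the only connected ones of i-degree $1$ are tripods. So it suffices to compute the strut coefficients and the tripod coefficients, and to check that everything else lives in i-degree $\ge 2$.

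The strut part is the degree-one part and is controlled by the linking matrix. By the standard computation of the degree-one part of the Kontsevich--LMO invariant in the normalization of \cite[\S 3.4--3.5]{CHM}, the strut part of $\chi^{-1}Z(B,\gamma)$ equals $\exp_\sqcup(\tfrac12\Lk_B(\gamma))$, so its logarithm is $\tfrac12\Lk_B(\gamma)$ read as a linear combination of struts. By hypothesis $\hat\gamma$ is algebraically split, so $\Lk_B(\gamma_i,\gamma_j)=\Lk_{\hat B}(\hat\gamma_i,\hat\gamma_j)=0$ for $i\ne j$ and all off-diagonal struts vanish, whereas the diagonal entries $\Lk_B(\gamma_i)=\operatorname{Fr}(\hat\gamma_i)$ produce exactly the self-struts with coefficient $\tfrac12\operatorname{Fr}(\hat\gamma_i)$, as claimed.

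The substance of the lemma is the tripod coefficients. I would first settle the model case $B=[-1,1]^3$, where $Z(B,\gamma)$ is the ordinary Kontsevich integral of the $q$-tangle $\gamma$ and $\hat\gamma\subset S^3$: there the coefficient of the $\{i^+,j^+,k^+\}$-tripod in $\log_\sqcup \chi^{-1}Z(\gamma)$ is $-\bar\mu_{ijk}(\hat\gamma)$ by the tree-level theorem of Habegger--Masbaum \cite{HabeggerMasbaum}, since for an algebraically split link the length-$3$ Milnor number is the first possibly non-vanishing one and is read off the tripod part of the tree-reduced Kontsevich integral; the sign and normalization are pinned down against \eqref{eq:mu} and the surface formula \eqref{eq:Milnor}. (The top-tangle/plat-closure geometry is matched to the string-link geometry of \cite{HabeggerMasbaum,Moffatt} by a standard dictionary that leaves the tree-level part unchanged.) For a general $\Q$-homology cube I would take a surgery presentation $(L,\gamma)\subset[-1,1]^3$, so that $\hat B$ is surgery on $L\subset S^3$, and pass from $Z(L\cup\gamma)$ to $Z(B,\gamma)$ by the formal Gaussian integration of \cite{BGRT1,BGRT2} used in \cite[\S 3.5]{CHM}. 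The goal is then to show that, modulo $(\ideg>1)$, integrating out $L$ converts the tripods computed in $S^3$ precisely into the triple Milnor numbers of $\hat\gamma$ recomputed in $\hat B$, the output being matched with \eqref{eq:Milnor}, whose integers $n_i$ account for the genuinely rational values arising over a $\Q$-homology sphere. This step generalizes Moffatt's $\Z$-homology-cube computation \cite{Moffatt} to the rational setting.

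The hard part is precisely this passage to a $\Q$-homology cube: one must verify that eliminating $L$ by Gaussian integration contributes nothing to the $\{i^+,j^+,k^+\}$-tripods at i-degree $1$ beyond the expected modification of $\bar\mu$, and that every sign together with the factor $\tfrac12$ on the struts is consistent with the conventions of \eqref{eq:mu} and of \cite{Bredon}. This sign-and-normalization bookkeeping---interacting with the two competing minus signs in \eqref{eq:Milnor} and in the statement---is the delicate point where an earlier version of this appendix contained a gap, and it is where I would concentrate the detailed work.
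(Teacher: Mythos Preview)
Your treatment of the strut part and of the model case $B=[-1,1]^3$ is fine and matches the paper (which also offers Habegger--Masbaum as one route, alongside a direct Borromean-transformation argument).

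The gap is in your passage to a general $\Q$-homology cube. You say you would ``take a surgery presentation $(L,\gamma)\subset[-1,1]^3$'' and then check that Gaussian integration ``converts the tripods computed in $S^3$ precisely into the triple Milnor numbers of $\hat\gamma$ recomputed in $\hat B$''. As stated this is a restatement of the goal, not a mechanism: with an arbitrary surgery presentation the linking matrix $\Lk_{[-1,1]^3}(L,\gamma)$ need not vanish, so integrating out $L$ can produce $\{i^+,j^+,k^+\}$-tripods by contracting $L$-colored legs against $\gamma$-colored struts, and simultaneously $\bar\mu_{ijk}(\hat\gamma)$ in $\hat B$ differs from its value in $S^3$. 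Tracking both sides and showing they match is exactly the content of the lemma; you have not indicated how to do it, and Moffatt's argument for $\Z$-homology cubes does not carry over directly because there the components of $\hat\gamma$ are integrally null-homologous and one can arrange $\Lk(L,\gamma)=0$.

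The paper's missing idea is a \emph{doubling reduction}. Replacing $\gamma_i$ by a cable going $r$ times around it multiplies both the tripod coefficient and $\bar\mu_{ijk}$ by $r$ in the $i$-slot (the former by the doubling property of the Kontsevich integral, the latter by Lemma~\ref{lem:triple_Milnor}). Hence one may assume each $\hat\gamma_i$ is \emph{integrally} null-homologous in $\hat B$; a further framing change (harmless on both sides) makes $\operatorname{Fr}(\hat\gamma_i)=0$, so genuine Seifert surfaces $\Sigma_i$ exist. One then chooses the surgery link so that it misses all the $\Sigma_i$, forcing $\Lk_{[-1,1]^3}(L,\gamma)=0$. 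With this vanishing, Gaussian integration contributes nothing to the $\gamma$-colored tripods, and the same Seifert surfaces compute $\bar\mu_{ijk}$ identically in $S^3$ and in $\hat B$. Both sides thus reduce to the already-settled $S^3$ case. This doubling step is precisely what your proposal lacks, and it is what makes the ``sign-and-normalization bookkeeping'' you worry about disappear rather than accumulate.
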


\noindent
Here the Kontsevich--LMO invariant $Z(B,\gamma)$ of the top tangle $(B,\gamma)$ 
is as defined in \cite[\S 3.5]{CHM} (and as briefly recalled in  \S \ref{sec:LMO_functor}).
The top tangle $\gamma$ is equipped with any non-associative word of length $\ell$
in the single letter $\bullet$ to which the rule  $\bullet \mapsto (+-)$ is applied.

\begin{proof}[Proof of Lemma \ref{lem:triple_Milnor_Y}]
The fact that $\chi^{-1}Z(B,\gamma) \in \A(\set{\ell}^+)$ is  of the form
$$
\exp_\sqcup\left(\sum_{i=1}^\ell \frac{\operatorname{Fr}(\hat \gamma_i)}{2} \strutgraphtop{i^+}{i^+}
+ \sum_{\substack 1 \leq i<j<k \leq \ell} a_{ijk}(\gamma) \cdot \Ygraphtop{k^+}{j^+}{i^+} + (\ideg >1)\right)
$$
for some $a_{ijk}(\gamma)\in \Q$ is well-known (see \cite[Lemma 3.17]{CHM} for example).
In the sequel, we denote by $z(B,\gamma)$ the ``Y'' part of $\chi^{-1} Z(B,\gamma)$
and by $m(B,\gamma)$ the linear combination of Y-shaped diagrams
defined by Milnor's triple linking numbers of $\hat{\gamma}$ in $\hat{B}$:
$$
z(B,\gamma) := \sum_{\substack 1 \leq i<j<k \leq \ell} a_{ijk}(\gamma) \cdot \Ygraphtop{k^+}{j^+}{i^+}, 
\quad \quad
m(B,\gamma) := \sum_{\substack 1 \leq i<j<k \leq \ell} \bar \mu_{ijk}(\hat{\gamma}) \cdot \Ygraphtop{k^+}{j^+}{i^+}.
$$
We are asked to prove that $m(B,\gamma)=-z(B,\gamma)$.

For any integer $r\geq 1$ and for any $i\in \{1,\dots,\ell\}$, 
let $\gamma_{r\times  i}$ be a top tangle in $B$ which is identical to $\gamma$,
except that the $i$-th component of $\gamma_{r\times i}$ now goes $r$ times around the 
$i$-th component of $\gamma$. Then, it can be deduced from the ``doubling property'' of the Kontsevich integral
that $z(B,\gamma_{r \times i})$ is obtained from $z(B,\gamma)$ by the operation $i^+ \mapsto r \cdot i^+$.
(Note that the version of the Kontsevich integral used in   \cite{CHM} has a ``doubling anomaly'', 
but this does not affect  $z(B,\gamma)$.)
Besides, it follows from Lemma \ref{lem:triple_Milnor} 
that $m(B,\gamma_{r\times  i})$ differs from $m(B,\gamma)$ in the same way.
Therefore we can assume in the sequel that each component of $\hat \gamma$ is null-homologous in $\hat B$ (with coefficients in $\Z$). 

For any  $n\in \Z$ and for any $i\in \{1,\dots,\ell\}$,
let $\gamma_{n \looparrowright  i}$ be the top tangle in $B$ that is obtained from $\gamma$
by changing the framing of $\gamma_i$ by $n$ 
(i.e$.$ by adding  $\vert n\vert$ ``kinks'' of  the same sign as $n$ to $\gamma$).
This operation modifies the Kontsevich integral by the exponential of an isolated chord (times $n/2$)
so that we have $z(B,\gamma)=z(B,\gamma_{n \looparrowright  i})$.
Since Milnor's triple linking numbers of $\hat \gamma$ do not depend on the framing of $\hat \gamma$,
we also have  $m(B,\gamma)=m(B,\gamma_{n \looparrowright  i})$.
So we can assume in the sequel that the framing number of each component of $\hat \gamma$ is zero.

With the above two assumptions on $\hat \gamma$, we can find for each $i\in \{1,\dots,\ell\}$ a Seifert surface $\Sigma_i$
for the framed knot $\hat \gamma_i$ such that $\Sigma_i$ is in transverse position with $\Sigma_j$ 
and does not meet $\hat\gamma_j$ for all $j\neq i$. 
The standard cube $[-1,1]^3$ can be obtained from $B$ by  surgery along a framed link $L^*$
and, by an isotopy of the link  $L^*$ in $B$, we can require that $L^* \subset \hat B$ is disjoint from $\hat \gamma$ and 
that each component of $L^*$  has a trivial linking number with every component of $\hat \gamma$.
Thus, by adding ``tubes'' to the Seifert surfaces  $\Sigma_1, \dots,  \Sigma_\ell$,
we can assume that each of them is disjoint from $L^*$.
Then the framed link $L\subset [-1,1]^3$ dual to $L^*$ has the following two properties:
first, surgery along $L$ produces $B$; second, the surfaces $\Sigma_1,\dots, \Sigma_\ell$ 
(and, a fortiori, their boundaries $\hat \gamma_1,\dots,\hat \gamma_\ell$)
can be seen in the exterior $[-1,1]^3 \setminus \operatorname{int}(N(L))$. 
Therefore, the pair $(L,\gamma)$ is a surgery presentation of $(B,\gamma)$ in $[-1,1]^3$
satisfying $\Lk_{[-1,1]^3}(L,\gamma)=0$ and $\Lk_{[-1,1]^3}(\gamma)=0$.
Then, using the formal Gaussian integration of \cite{BGRT1,BGRT2} 
and adopting the notation of \cite[\S 3]{CHM}, we have
\begin{eqnarray*}
z(B,\gamma) &=& \hbox{``Y'' part of} \quad U_+^{-\sigma_+(L)} \sqcup U_-^{-\sigma_-(L)} \sqcup
\int_{\pi_0(L)} \chi^{-1}Z(L^\nu \cup \gamma)\\
&=& \hbox{``Y'' part of} \ \int_{\pi_0(L)} \chi^{-1} Z(L^\nu\cup \gamma)\\
&=& \hbox{``Y'' part of} \ \int_{\pi_0(L)} \chi^{-1} Z(L\cup \gamma)
\end{eqnarray*}
where the last identity follows from the fact that $\chi^{-1}$  preserves the i-degree filtration.
Next, we have $\chi^{-1} Z(L\cup \gamma) = \exp_\sqcup(A/2) \sqcup \exp_\sqcup(T)$
where the matrix $A:=\Lk_{[-1,1]^3}(L)$ is regarded as a linear combination of struts
and where $T$ consists of connected Jacobi diagrams colored by $\pi_0(L\cup \gamma)$ of i-degree $\geq 1$. 
So we have
$$
\int_{\pi_0(L)} \chi^{-1} Z(L\cup \gamma) = \left\langle \exp_\sqcup(-A^{-1}/2), \exp_\sqcup(T)\right\rangle_{\pi_0(L)}
$$
and we deduce that $z(B,\gamma)$ is the ``Y'' part of $\chi^{-1}Z(L\cup \gamma)$ 
that only involves $\pi_0(\gamma)$, i.e$.$ the ``Y'' part of $\chi^{-1}Z(\gamma)$. 
Besides,  we deduce from Lemma \ref{lem:triple_Milnor} that the Milnor invariant $\bar \mu_{ijk}(\hat \gamma)$ 
for $\hat \gamma \subset \hat B$ is the same as for $\hat \gamma \subset \widehat{[-1,1]^3}=S^3$.
Therefore we can assume in the sequel that $B$ is the standard cube $[-1,1]^3$.

With this assumption on $B$, the invariant $Z(B,\gamma)$ is the usual Kontsevich integral $Z(\gamma)$
of the $q$-tangle $\gamma$ (as it is normalized in \cite{CHM}).
Then we can appeal to \cite{HabeggerMasbaum} or, alternatively, we can proceed as follows. 
If $\gamma$ is the trivial $\ell$-component top tangle,
then the ``Y'' part of  $Z(\gamma)$ is trivial and we are done.
Next, we assume that $\gamma$ is any top tangle in $B=[-1,1]^3$, still with the assumption $\Lk_B(\gamma)=0$. 
Then, according to \cite{Matveev,MN},
$\gamma$ can be obtained from the trivial top tangle 
by a finite number of ``Borromean transformations''. 
Such a transformation performed on the $i$-th, $j$-th and $k$-th components
of $\gamma$ produces in $z(B,\gamma)$ a variation by 
$$
\varepsilon \Ygraphtop{k^+}{j^+}{i^+} 
$$ 
(see \cite[Lemma 11.22]{Ohtsuki}).
Here $\varepsilon=\pm 1$ is a  sign that depends on the configuration of the Borromean rings. 
It is  easily checked that the same  ``Borromean transformation''  
produces for $\bar\mu_{ijk}(\hat{\gamma})$ a variation by $-\varepsilon$. 
\end{proof}

%
%
%
%
%
%

\bibliographystyle{abbrv}

\bibliography{LMO_split}

\end{document}